\definecolor{deepjunglegreen}{rgb}{0.0, 0.29, 0.29}
\definecolor{darkspringgreen}{rgb}{0.09, 0.45, 0.27}
\pretocmd\section{\Needspace*{4\baselineskip}}{}{}
\newtheorem{thm}{Theorem}[subsection]
\newtheorem{cor}[thm]{Corollary}
\newtheorem{lem}[thm]{Lemma}
\newtheorem{prop}[thm]{Proposition}
\theoremstyle{definition}
\newtheorem{defn}[thm]{Definition}
\theoremstyle{remark}
\newtheorem{rem}[thm]{Remark}
\newcommand{\nc}{\newcommand}
\nc{\renc}{\renewcommand} \nc{\ssec}{\subsection}
\nc{\sssec}{\subsubsection}
\nc{\on}{\operatorname} \nc{\wh}{\widehat}
\nc\ol{\overline} \nc\ul{\underline} \nc\wt{\widetilde}
\newcommand{\red}[1]{{\color{red}#1}}
\nc{\BA}{{\mathbb{A}}} \nc{\BC}{{\mathbb{C}}} \nc{\BF}{{\mathbb{F}}}
\nc{\BD}{{\mathbb{D}}} \nc{\BG}{{\mathbb{G}}} \nc{\BQ}{{\mathbb{Q}}}
\nc{\BM}{{\mathbb{M}}} \nc{\BN}{{\mathbb{N}}} \nc{\BO}{{\mathbb{O}}}
\nc{\BP}{{\mathbb{P}}} \nc{\BR}{{\mathbb{R}}}
\nc{\BZ}{{\mathbb{Z}}} \nc{\BS}{{\mathbb{S}}} \nc{\BW}{{\mathbb{W}}}
\nc{\CA}{{\mathcal{A}}} \nc{\CB}{{\mathcal{B}}} \nc{\CalC}{{\mathcal{C}}} \nc{\CalD}{{\mathcal{D}}}
\nc{\CE}{{\mathcal{E}}} \nc{\CF}{{\mathcal{F}}} \nc{\CC}{{\mathcal{C}}} 
\nc{\CG}{{\mathcal{G}}} \nc{\CH}{{\mathcal{H}}}
\nc{\CI}{{\mathcal{I}}} \nc{\CK}{{\mathcal{K}}} \nc{\CL}{{\mathcal{L}}}
\nc{\CM}{{\mathcal{M}}} \nc{\CN}{{\mathcal{N}}}
\nc{\CO}{{\mathcal{O}}} \nc{\CP}{{\mathcal{P}}}
\nc{\CQ}{{\mathcal{Q}}} \nc{\CR}{{\mathcal{R}}}
\nc{\CS}{{\mathcal{S}}} \nc{\CT}{{\mathcal{T}}}
\nc{\CU}{{\mathcal{U}}} \nc{\CV}{{\mathcal{V}}}  \nc{\CY}{{\mathcal Y}}
\nc{\CW}{{\mathcal{W}}} \nc{\CZ}{{\mathcal{Z}}}
\nc{\cM}{{\check{\mathcal M}}{}} \nc{\csM}{{\check{\mathcal A}}{}}
\nc{\oM}{{\overset{\circ}{\mathcal M}}{}}
\nc{\obM}{{\overset{\circ}{\mathbf M}}{}}
\nc{\oCA}{{\overset{\circ}{\mathcal A}}{}}
\nc{\obA}{{\overset{\circ}{\mathbf A}}{}}
\nc{\ooM}{{\overset{\circ}{M}}{}}
\nc{\osM}{{\overset{\circ}{\mathsf M}}{}}
\nc{\vM}{{\overset{\bullet}{\mathcal M}}{}}
\nc{\nM}{{\underset{\bullet}{\mathcal M}}{}}
\nc{\oD}{{\overset{\circ}{\mathcal D}}{}}
\nc{\obD}{{\overset{\circ}{\mathbf D}}{}}
\nc{\oA}{{\overset{\circ}{\mathbb A}}{}}
\nc{\op}{{\overset{\bullet}{\mathbf p}}{}}
\nc{\cp}{{\overset{\circ}{\mathbf p}}{}}
\nc{\oU}{{\overset{\bullet}{\mathcal U}}{}}
\nc{\ofZ}{{\overset{\circ}{\mathfrak Z}}{}}
\nc{\ff}{{\mathfrak{f}}} \nc{\fv}{{\mathfrak{v}}}
\nc{\fa}{{\mathfrak{a}}} \nc{\fb}{{\mathfrak{b}}}
\nc{\fd}{{\mathfrak{d}}} \nc{\fe}{{\mathfrak{e}}}
\nc{\fg}{{\mathfrak{g}}} \nc{\fgl}{{\mathfrak{gl}}}
\nc{\fh}{{\mathfrak{h}}} \nc{\fri}{{\mathfrak{i}}}
\nc{\fj}{{\mathfrak{j}}} \nc{\fk}{{\mathfrak{k}}} \nc{\fl}{{\mathfrak{l}}}
\nc{\fm}{{\mathfrak{m}}} \nc{\fn}{{\mathfrak{n}}}
\nc{\ft}{{\mathfrak{t}}} \nc{\fu}{{\mathfrak{u}}}
\nc{\fw}{{\mathfrak{w}}} \nc{\fz}{{\mathfrak{z}}}
\nc{\fp}{{\mathfrak{p}}} \nc{\fq}{{\mathfrak{q}}} \nc{\frr}{{\mathfrak{r}}}
\nc{\fs}{{\mathfrak{s}}} \nc{\fsl}{{\mathfrak{sl}}}
\nc{\fso}{{\mathfrak{so}}} \nc{\fsp}{{\mathfrak{sp}}} \nc{\osp}{{\mathfrak{osp}}}
\nc{\hsl}{{\widehat{\mathfrak{sl}}}}
\nc{\hgl}{{\widehat{\mathfrak{gl}}}}
\nc{\hg}{{\widehat{\mathfrak{g}}}}
\nc{\chg}{{\widehat{\mathfrak{g}}}{}^\vee}
\nc{\hn}{{\widehat{\mathfrak{n}}}}
\nc{\chn}{{\widehat{\mathfrak{n}}}{}^\vee}
\nc{\fA}{{\mathfrak{A}}} \nc{\fB}{{\mathfrak{B}}} \nc{\fC}{{\mathfrak{C}}}
\nc{\fD}{{\mathfrak{D}}} \nc{\fE}{{\mathfrak{E}}}
\nc{\fF}{{\mathfrak{F}}} \nc{\fG}{{\mathfrak{G}}} \nc{\fH}{{\mathfrak{H}}}
\nc{\fI}{{\mathfrak{I}}} \nc{\fJ}{{\mathfrak{J}}}
\nc{\fK}{{\mathfrak{K}}} \nc{\fL}{{\mathfrak{L}}}
\nc{\fM}{{\mathfrak{M}}} \nc{\fN}{{\mathfrak{N}}}
\nc{\fP}{{\mathfrak{P}}} \nc{\fQ}{{\mathfrak{Q}}}
\nc{\fS}{{\mathfrak{S}}} \nc{\fT}{{\mathfrak{T}}} \nc{\fU}{{\mathfrak{U}}}
\nc{\fV}{{\mathfrak{V}}} \nc{\fW}{{\mathfrak{W}}}
\nc{\fX}{{\mathfrak{X}}} \nc{\fY}{{\mathfrak{Y}}}
\nc{\fZ}{{\mathfrak{Z}}}
\nc{\ba}{{\mathbf{a}}}
\nc{\bb}{{\mathbf{b}}} \nc{\bc}{{\mathbf{c}}} \nc{\be}{{\mathbf{e}}}
\nc{\bg}{{\mathbf{g}}} \nc{\bj}{{\mathbf{j}}} \nc{\bm}{{\mathbf{m}}}
\nc{\bn}{{\mathbf{n}}} \nc{\bp}{{\mathbf{p}}}
\nc{\bq}{{\mathbf{q}}} \nc{\br}{{\mathbf{r}}} \nc{\bt}{{\mathbf{t}}}
\nc{\bfu}{{\mathbf{u}}} \nc{\bv}{{\mathbf{v}}}
\nc{\bx}{{\mathbf{x}}} \nc{\by}{{\mathbf{y}}} \nc{\bz}{{\mathbf{z}}}
\nc{\bw}{{\mathbf{w}}} \nc{\bA}{{\mathbf{A}}}
\nc{\bB}{{\mathbf{B}}} \nc{\bC}{{\mathbf{C}}}
\nc{\bD}{{\mathbf{D}}} \nc{\bF}{{\mathbf{F}}} \nc{\bG}{{\mathbf{G}}}
\nc{\bH}{{\mathbf{H}}} \nc{\bI}{{\mathbf{I}}} \nc{\bJ}{{\mathbf{J}}}
\nc{\bK}{{\mathbf{K}}} \nc{\bM}{{\mathbf{M}}} \nc{\bN}{{\mathbf{N}}}
\nc{\bO}{{\mathbf{O}}} \nc{\bS}{{\mathbf{S}}} \nc{\bT}{{\mathbf{T}}}
\nc{\bU}{{\mathbf{U}}} \nc{\bV}{{\mathbf{V}}} \nc{\bW}{{\mathbf{W}}}
\nc{\bX}{{\mathbf{X}}}
\nc{\bY}{{\mathbf{Y}}} \nc{\bP}{{\mathbf{P}}}
\nc{\bZ}{{\mathbf{Z}}} \nc{\bh}{{\mathbf{h}}}
\nc{\sA}{{\mathsf{A}}} \nc{\sB}{{\mathsf{B}}}
\nc{\sC}{{\mathsf{C}}} \nc{\sD}{{\mathsf{D}}}
\nc{\sE}{{\mathsf{E}}} \nc{\sF}{{\mathsf{F}}} \nc{\sG}{{\mathsf{G}}}
\nc{\sI}{{\mathsf{I}}} \nc{\sk}{{\mathsf{k}}} \nc{\sK}{{\mathsf{K}}} \nc{\sL}{{\mathsf{L}}}
\nc{\sfm}{{\mathsf{m}}} \nc{\sM}{{\mathsf{M}}} \nc{\sN}{{\mathsf{N}}}
\nc{\sO}{{\mathsf{O}}} \nc{\sQ}{{\mathsf{Q}}} \nc{\sP}{{\mathsf{P}}}
\nc{\sT}{{\mathsf{T}}} \nc{\sZ}{{\mathsf{Z}}}
\nc{\sV}{{\mathsf{V}}} \nc{\sW}{{\mathsf{W}}}
\nc{\sfp}{{\mathsf{p}}} \nc{\sq}{{\mathsf{q}}} \nc{\sr}{{\mathsf{r}}}
\nc{\sfs}{{\mathsf{s}}} \nc{\st}{{\mathsf{t}}} \nc{\sfb}{{\mathsf{b}}}
\nc{\sfc}{{\mathsf{c}}} \nc{\sd}{{\mathsf{d}}}
\nc{\sz}{{\mathsf{z}}}
\nc{\tA}{{\widetilde{\mathbf{A}}}}
\nc{\tB}{{\widetilde{\mathcal{B}}}}
\nc{\tg}{{\widetilde{\mathfrak{g}}}} \nc{\tG}{{\widetilde{G}}}
\nc{\TM}{{\widetilde{\mathbb{M}}}{}}
\nc{\tO}{{\widetilde{\mathsf{O}}}{}}
\nc{\tU}{{\widetilde{\mathfrak{U}}}{}} \nc{\TZ}{{\tilde{Z}}}
\nc{\tx}{{\tilde{x}}} \nc{\tbv}{{\tilde{\bv}}}
\nc{\tfP}{{\widetilde{\mathfrak{P}}}{}} \nc{\tz}{{\tilde{\zeta}}}
\nc{\tmu}{{\tilde{\mu}}}
\nc{\urho}{\underline{\rho}} \nc{\uB}{\underline{B}}
\nc{\uC}{{\underline{\mathbb{C}}}} \nc{\ui}{\underline{i}}
\nc{\uj}{\underline{j}} \nc{\ofP}{{\overline{\mathfrak{P}}}}
\nc{\oB}{{\overline{\mathcal{B}}}}
\nc{\og}{{\overline{\mathfrak{g}}}} \nc{\oI}{{\overline{I}}}
\nc{\eps}{\varepsilon} \nc{\hrho}{{\hat{\rho}}}
\nc{\blambda}{{\boldsymbol{\lambda}}} \nc{\bmu}{{\boldsymbol{\mu}}} \nc{\bnu}{{\boldsymbol{\nu}}}
\nc{\btheta}{{\boldsymbol{\theta}}} \nc{\bzeta}{{\boldsymbol{\zeta}}} \nc{\bta}{{\boldsymbol{\eta}}}
\nc{\one}{{\mathbf{1}}} \nc{\two}{{\mathbf{t}}}
\nc{\Sym}{\mathop{\operatorname{\rm Sym}}}
\nc{\Tot}{{\mathop{\operatorname{\rm Tot}}}}
\nc{\Spec}{\mathop{\operatorname{\rm Spec}}}
\nc{\Ker}{{\mathop{\operatorname{\rm Ker}}}}
\nc{\Isom}{{\mathop{\operatorname{\rm Isom}}}}
\nc{\Hilb}{{\mathop{\operatorname{\rm Hilb}}}}
\nc{\deeq}{{\mathop{\operatorname{\rm deeq}}}}
\nc{\End}{{\mathop{\operatorname{\rm End}}}}
\nc{\Ext}{{\mathop{\operatorname{\rm Ext}}}}
\nc{\Hom}{{\mathop{\operatorname{\rm Hom}}}}
\nc{\CHom}{{\mathop{\operatorname{{\mathcal{H}}\it om}}}}
\nc{\GL}{{\mathop{\operatorname{\rm GL}}}}
\nc{\SL}{{\mathop{\operatorname{\rm SL}}}}
\nc{\SO}{{\mathop{\operatorname{\rm SO}}}}
\nc{\Sp}{{\mathop{\operatorname{\rm Sp}}}}
\nc{\OSp}{{\mathop{\operatorname{\rm SOSp}}}}
\nc{\gr}{{\mathop{\operatorname{\rm gr}}}}
\nc{\Id}{{\mathop{\operatorname{\rm Id}}}}
\nc{\perf}{{\mathop{\operatorname{\rm perf}}}}
\nc{\defi}{{\mathop{\operatorname{\rm def}}}}
\nc{\length}{{\mathop{\operatorname{\rm length}}}}
\nc{\supp}{{\mathop{\operatorname{\rm supp}}}}
\nc{\HC}{{\mathcal H}{\mathcal C}}
\nc{\pr}{{\operatorname{pr}}}
\nc{\Cliff}{{\mathsf{Cliff}}}
\nc{\loc}{{\operatorname{loc}}}
\nc{\Fl}{{\mathbf{Fl}}} \nc{\Ffl}{{\mathcal{F}\ell}}
\nc{\Fib}{{\mathsf{Fib}}}
\nc{\Coh}{{\mathsf{Coh}}} \nc{\FCoh}{{\mathsf{FCoh}}}
\nc{\Perf}{{\mathsf{Perf}}}
\nc{\wtimes}{\mathbin{\widetilde\times}}
\nc{\reg}{{\text{\rm reg}}}
\nc{\self}{{\text{\rm self}}}
\nc{\gvee}{{\mathfrak g}^{\!\scriptscriptstyle\vee}}
\nc{\tvee}{{\mathfrak t}^{\!\scriptscriptstyle\vee}}
\nc{\nvee}{{\mathfrak n}^{\!\scriptscriptstyle\vee}}
\nc{\bvee}{{\mathfrak b}^{\!\scriptscriptstyle\vee}}
       \nc{\rhovee}{\rho^{\!\scriptscriptstyle\vee}}
\nc{\cplus}{{\mathbf{C}_+}} \nc{\cminus}{{\mathbf{C}_-}}
\nc{\cthree}{{\mathbf{C}_*}} \nc{\Qbar}{{\bar{Q}}}
\newcommand\iso{\mathbin{\vphantom{j^{X^2}}\smash{\overset{\sim}{\vphantom{\rule{0pt}{0.20em}}\smash{\longrightarrow}}}}}
\nc{\Gtimes}{\vphantom{j^{X^2}}\smash{\overset{G}{\vphantom{\rule{0pt}{0.30em}}\smash{\times}}}}
\nc{\sGtimes}{\vphantom{j^{X^2}}\smash{\overset{\mathsf G}{\vphantom{\rule{0pt}{0.30em}}\smash{\times}}}}
\nc{\bOmega}{{\overline{\Omega}}}
\nc{\seq}[1]{\stackrel{#1}{\sim}}
\nc{\aff}{{\operatorname{aff}}} \nc{\ass}{{\operatorname{ass}}} \nc{\Hyp}{{\operatorname{Hyp}}}
\nc{\fin}{{\operatorname{fin}}} \nc{\GGM}{{\operatorname{GGM}}}
\nc{\mir}{{\operatorname{mir}}}
\nc{\triv}{{\operatorname{triv}}}
\nc{\ext}{{\operatorname{ext}}}
\nc{\righ}{{\operatorname{right}}}
\nc{\lef}{{\operatorname{left}}}
\nc{\forg}{{\operatorname{forg}}}
\nc{\fid}{{\operatorname{fd}}}
\nc{\odd}{{\operatorname{odd}}}
\nc{\even}{{\operatorname{even}}}
\nc{\modu}{{\operatorname{-mod}}}
\nc{\Gr}{{\mathbf{Gr}}}
\nc{\FT}{{\operatorname{FT}}}
\nc{\Mat}{{\operatorname{Mat}}}
\nc{\MSt}{{\operatorname{MSt}}}
\nc{\sph}{{\operatorname{sph}}}
\nc{\GR}{{\mathbf{Gr}}}
\nc{\Perv}{{\operatorname{Perv}}}
\nc{\Rep}{{\operatorname{Rep}}}
\nc{\Ind}{{\operatorname{Ind}}}
\nc{\IC}{{\operatorname{IC}}}
\nc{\Bun}{{\operatorname{Bun}}}
\nc{\Proj}{{\operatorname{Proj}}}
\nc{\Stab}{{\operatorname{Stab}}}
\nc{\Vect}{{\operatorname{Vect}}}
\nc{\pt}{{\operatorname{pt}}}
\nc{\bfmu}{{\boldsymbol{\mu}}}
\nc{\bfomega}{{\boldsymbol{\omega}}}
\nc{\calM}{\mathcal M}
\nc{\calA}{\mathcal A}
\nc{\calO}{\mathcal O}
\nc{\calN}{\mathcal N}
\nc{\grg}{\mathfrak g}
\nc{\dslash}{/\!\!/}
\nc{\tslash}{/\!\!/\!\!/}
\nc\grt{\mathfrak t}
\nc\bfM{\mathbf M}
\nc\bfN{\mathbf N}
\nc\Sig{\Sigma}
\nc\ZZ{\mathbb{Z}}
\nc\calC{\mathcal C}
\nc\calF{\mathcal F}
\nc\calX{\mathcal X}
\nc\calY{\mathcal Y}
\nc\QCoh{\operatorname{QCoh}}
\nc\IndCoh{\operatorname{IndCoh}}
\nc\Maps{\operatorname{Maps}}
\nc\Dmod{D-\operatorname{mod}}
\newcommand\Hecke{\operatorname{Hecke}}
\nc{\calD}{\mathcal D}
\nc\bfO{\mathbf O}
\nc\GG{\mathbb G}
\nc\calK{\mathcal K}
\nc{\calG}{\mathcal G}
\nc\RHom{\operatorname{RHom}}
\nc\Res{\operatorname{Res}}
\nc\Av{\operatorname{Av}}
\nc\grs{\mathfrak s}
\nc{\tilX}{\widetilde X}
\nc\calB{\mathcal B}
\nc\calS{\mathcal S}
\nc\calT{\mathcal T}
\nc\calZ{\mathcal Z}
\nc\LS{\operatorname{LocSys}}
\nc\bfL{\on{\mathbf L}}
\nc{\scP}{{\mathscr{P}}}
\nc{\scQ}{{\mathscr{Q}}}
\newcommand*\circled[1]
\theoremstyle{remark}
\newtheorem{example}[thm]{Example}
\def\Des{\mathrm{Des}}
\def\Ind{\mathrm{Ind}}
\def\Sym{\mathrm{Sym}}
\def\Asym{\mathrm{Asym}}
\def\Image{\mathrm{Im}}
\def\Kernel{\mathrm{Ker}}
\def\Span#1{\underset{#1}{\mathrm{Span}}\,}
\newcommand{\raisemath}[1]{\mathpalette{\raisem@th{#1}}}
\newcommand{\raisem@th}[3]{\raisebox{#1}{$#2#3$}}
\nc{\binlim}[2][]{\def\@tempa{#1}\@ifnextchar^{\@binlim{#2}}{\@binlim{#2}^{}}}
\def\@binlim#1^#2{\mathbin{\@ifempty{#2}{\mathop{#1}}{\mathop{#1}\@xp\displaylimits\@tempa^{#2}}}}
\nc\cX{{\mathcal X}}
\nc\Gm{{\mathbb G_m}}
\renc\Hecke{\mathit{\CH\kern-.2ex ecke}}
\nc\Fq{\mathbb F_q}
\nc\bGO{{\bG_\bO}}
\nc\opp{{\on{op}}}
\nc\tbx{\binlim{\widetilde\boxtimes{}}}
\nc\phitau{\varphi\tau}
\newenvironment{i-ii-iii}{%
\begin{enumerate}
}%
{\end{enumerate}}
\nc\ceil[1]{\lceil#1\rceil}  \nc\floor[1]{\lfloor#1\rfloor}
\nc\Lie{\on{Lie}}
\nc\sS{{\mathsf S}}
\nc\vvv{\ensuremath{\red\surd}}
 \let\arXiv\arxiv
\nc\kap{\kappa}
\nc\gra{\mathfrak a}
\nc\gl{\mathfrak{gl}}
\nc\sTr{\operatorname{sTr}}
\nc\hatG{\widehat{G}}
\nc\calL{\mathcal L}
\nc\Whit{\operatorname{Whit}}
\nc\KL{\operatorname{KL}}
\renewcommand{\subsection}{\@startsection{subsection}{2}{0pt}{-3ex
plus -1ex minus -0.2ex}{-2mm plus -0pt minus
-2pt}{\normalfont\bfseries}} \makeatother
\numberwithin{equation}{subsection}
\nc\mto{\mapsto }
\nc\en{\enspace }
\begin{document}

\author[M.~Finkelberg]{Michael Finkelberg}
\address{National Research University Higher School of Economics, Russian Federation,
  Department of Mathematics, 6 Usacheva St, 119048 Moscow;
\newline Skolkovo Institute of Science and Technology;
\newline Institute for the Information Transmission Problems}
\email{fnklberg@gmail.com}

\author[A.~Postnikov]{Alexander Postnikov}
\address{Department of Mathematics, Massachusetts Institute of Technology, 77 Massachusetts Ave, Cambridge MA 02139, USA}
\email{apost@math.mit.edu}

\author[V.~Schechtman]{Vadim Schechtman}
\address{Institut de Math\'ematiques de Toulouse, Universit\'e Paul Sabatier, 118 Route de Narbonne, 31062 Toulouse, France}
\email{schechtman@math.ups-tlse.fr}

\title
{Kostka numbers and Fourier duality}
\dedicatory{To Yuri Ivanovich Manin on his 85th birthday with admiration
\newline 
\
\newline
May 26, 2022}




\begin{abstract}
We relate the Fourier transform of perverse sheaves smooth along the coordinate
hyperplane configuration in a complex vector space to the Deligne-Lusztig duality
of unipotent representations of a general linear group over a finite field.
A similar relation is established for arbitrary finite Coxeter groups.  
\end{abstract}

\maketitle

\tableofcontents

\section{Introduction}
\label{intro}

\subsection{}
As soon as perverse sheaves were discovered about 40 years ago, there appeared a problem of concrete
linear algebraic description of abelian categories of perverse sheaves smooth along some particular
stratifications (``How to glue perverse sheaves''). One important case is a complex affine space
stratified by a hyperplane arrangement. In case such an arrangement is {\em real}, a solution of
this problem was suggested in~\cite{ks1} ({\em hyperbolic sheaves}).

In the present note we study a most trivial example of a hyperplane arrangement: namely the one of
coordinate hyperplanes. In this case another description of the category of perverse sheaves was
proposed in~\cite{ggm} ({\em iterated vanishing/nearby cycles}). It turns out that already in this
simplest example, the interplay between the two different descriptions of the same abelian category
leads to some nontrivial combinatorial consequences.

Namely, we go even further along the route of simplification and consider semisimple perverse sheaves
on $\BC^n$ with trivial monodromy, however with coefficients not in the category of vector spaces,
but in the category $\CC$ of representations of the symmetric group $S_{n+1}$ (over a field $\sk$ of
characteristic~0). We consider a perverse sheaf $\CF_\mu$ whose generic stalk is the irreducible
$S_{n+1}$-module $V_\mu$ (for a partition $\mu$ of $n+1$), and whose hyperbolic stalks are 
equal to induction of $V_\mu$ to $S_{n+1}$ from appropriate parabolic subgroups. When
$\mu=(n+1)$, so that $V_{(n+1)}$ is trivial, we call the multiplicities of simple perverse
sheaves in $\CF_{(n+1)}$ {\em small Kostka numbers} $\kappa_{\lambda,I}$.
According to~Theorem~\ref{1.6}, $\kappa_{\lambda,I}$ is equal to the number of standard Young tableaux
of shape $\lambda$ with descent set $I$. 
We also discuss an extension of (small) 
Kostka numbers to an arbitrary finite Coxeter group $W$
related to \cite{so2}.

The {\em hyperbolic calculus} developed in~\cite{fks} allows to compute the Fourier-Sato transform
of $\CF_\mu$. Namely, we have $\on{FT}\CF_\mu\cong\CF_{\mu^t}$ (transposed partition). Also, the
hyperbolic calculus provides a {\em master complex} $\CM as^\bullet_\mu$ computing the vanishing
cycles $\Phi_0\CF_\mu$ of $\CF_\mu$ at the origin. It turns out that $\Phi_0\CF_\mu\cong V_{\mu^t}$,
and the master complex goes back at least to~\cite{so}.

For a prime power $q$, there is a Benson-Curtis isomorphism
$\sk[S_{n+1}]\iso\CH_{n+1}$ with the Iwahori-Hecke algebra. Thus $\CC$ is equivalent to the category
$\CC_q$ of $\CH_{n+1}$-modules. Under this equivalence, the master complex $\CM as^\bullet_\mu$
corresponds to a complex $_q\CM as^\bullet_\mu$ going back at least to~\cite{ka}. According to
{\em loc.cit.}, a similar complex
exists for the Hecke algebra of an arbitrary finite Coxeter group.

Furthermore, $\CC_q$ is equivalent to the category of unipotent representations of
$\GL(n+1,\BF_q)$, and under this equivalence, $_q\CM as^\bullet_\mu$ corresponds to the complex
of~\cite{dl} defining the Alvis-Curtis-Deligne-Lusztig-Kawanaka duality~\cite{a,c,kaw,dl}.

\subsection{Acknowledgments}
This note was conceived in 2017. We are grateful to 
Anatol Kirillov and Mikhail Kapranov for the inspiring discussions.
We also thank George Lusztig for bringing~\cite{so2} to our attention.

M.F.\ was partially funded within the framework of the HSE University Basic Research Program
and the Russian Academic Excellence Project `5-100'.

\section{Hyperbolic sheaves over coordinate arrangements}
\label{coordinate}

If $(X, \leq)$ is a poset, we call a {\it bisheaf} on $X$ with values in a category $\CC$ 
a couple $B = (\gamma, \delta)$ of functors
\[\gamma\colon  X \to \CC,\ \delta\colon X^\opp \to \CC\]
such that for all $x\in X,\ \gamma(x) = \delta(x)=:B(x)$. Thus, for all 
$x\leq y$ we have two arrows
\[\gamma_{x,y}\colon B(x)\leftrightarrows B(y)\ :\delta_{y,x}.\]

\subsection{Real coordinate arrangement}
\label{2.1}
Let $V = \BR^n$ with coordinates $x_1, \ldots, x_n$;
\[\CH = \{H_1, \ldots, H_n\},\]
where $H_i\subset V$ is given by the equation $x_i = 0$. 

$\CS$ is the corresponding stratification of $V$.

The strata of $\CS$ are in bijection with sequences 
\[s = (s_1, \ldots, s_n) \in \{-1, 0, 1\}^n,\]
where $C_s$ is given by $n$ equalities and inequalities: 
$x_i = 0$ if $s_i = 0$, and $s_ix_i >  0$ if $s_i = \pm 1$. So all the strata are cells, and in what
follows we will refer to the strata as cells.

To put it differently, the cells are numbered by couples $(I, \epsilon)$ where 
$I$ is a subset of $[n]:=\{1,\ldots,n\}$, and $\epsilon = (\epsilon_j )\in \{1, -1\}^{[n]\setminus I}$
is a collection of signs on the complement. We denote the corresponding cell by $C_{(I, \epsilon)}$.

We have $C_{(I, \epsilon)} \leq C_{(I', \epsilon')}$ (i.e.\ $C_{(I, \epsilon)}$ lies in the closure of
$C_{(I', \epsilon')}$) if and only if 

(a) $I \supset I'$, whence $\ol{I} := [n]\setminus I \subset \ol{I'}$,

and 

(b) $\epsilon_j = \epsilon'_j$ for all $j\in \ol{I}$.

In other words, $(I, \epsilon)$ is obtained from $(I,\epsilon')$ by replacing some elements
$\epsilon'_j$ in the sequence $\epsilon'$ by $0$'s. 

Consider two cells $C_s$ and $C' = C_{s'}$ where the sequence $s'$ is obtained 
from $s$ by replacing some element $s_i = \pm 1$ by $s'_i = - s_i$. We call such two cells
{\it neighbours}. Let $C'' = C_{s''}$ where $s''$ is obtained 
from $s$ by replacing the element $s_i$ by $0$. This $C''$ is called {\it the wall} between 
$C$ and $C''$. 

Following Tits, let us call {\it a gallery} a sequence
$C_0, \ldots, C_r$
where all $C_i, C_{i+1}$ are neighbours. We call $C_0$ and $C_r$ {\it faraway neighbours}. 

Each cell $C$ of dimension $p$ has $2^p$ faraway neighbours (including itself), whose union 
is dense in $L(C)$: {\it the flat } of $C$, the $\BR$-vector space spanned by $C$.  

\bigskip

We denote by $C_I$ the positive cell $C_{(I,\epsilon)}$ with all $\epsilon_j = 1$. 
So, there are $2^n$ positive cells.

\subsection{Complex coordinate arrangement}

Let \[\CH_\BC = \{H_1^\BC, \ldots, H_n^\BC\}\]
be the complex coordinate arrangement in $V_\BC = \BC^n$, and let
$\CS_{\BC}$ be the corresponding stratification of $V_\BC$.

Its strata are $S_I, I\subset [n] : = \{1, \ldots, n\}$, where
\[S_I = \{x\in V_\BC| x_i = 0 \text{\ for}\ i\in I, x_i\neq 0\text{\ for}\ i\notin I\}.\]

We write $S \leq S'$ iff $S\subset \ol{S'}$. 

In the complex case $S_I \leq S_J$ iff $J\subset I$.

A complex stratum $S_I$ with $|I| = p$ is of real codimension $2p$, and contains 
$2^{n-p}$ real cells $C_{(I,\epsilon)}$.

\subsection{GGM sheaves}
\label{2.2}
We have two linear algebra descriptions of the category 
$\CM(V_\BC, \CS_\BC) = \Perv(V_\BC, \CS_\BC)$. 

The first one is due to~\cite{ggm}. Namely, $\CM(V_\BC, \CS_\BC)$ is equivalent 
to  a category $\GGM_n$ whose objects are certain   
bisheaves $\Phi$ on the poset $\CS_\BC$ with values in $\Vect$, i.e.\

--- collections of $2^n$ spaces 
\[\Phi(I) = \Phi(S_I)\in \Vect,\ I\subset[n],\]

--- and {\em canonical} maps  
\[u_{IJ}\colon \Phi(I) \to \Phi(J),\ I\subset J,\]
and {\em variation} maps 
\[v_{JI}\colon \Phi(J) \to \Phi(I),\ I\subset J,\]
satisfying the transitivity for triples $I\subset J \subset K$.

They must satisfy the following further conditions.

For all $I, i\in I $ denote 
\[v_i=v_{I,I\setminus \{i\}}\colon \Phi(I) \leftrightarrows \Phi(I\setminus \{i\})\ : 
u_{I\setminus \{i\}, I}=u_i.\] 

Then the {\em mixed commutativity} condition: $uv = vu$ must hold, that is, for any
$i, j\in I, i \neq j$, the square 
\[\begin{CD}
\Phi(I\setminus \{i\}) @>{v_j}>> \Phi(I\setminus \{i, j\})\\
@V{u_i}VV  @V{u_i}VV \\
\Phi(I)  @>{v_j}>>  \Phi(I\setminus \{ j\})
\end{CD}\]
must commute. 

Also, the {\em invertibility} condition of $1+vu$ must hold, that is for all
$i\in I\subset[n]$, the {\em monodromy} operator
\[T_i:=1+v_iu_i\colon \Phi_{I}\to \Phi_{I}\]
must be invertible. 

Note that $1+v_iu_i$ is invertible if and only if $1+u_iv_i$ is invertible. 

This definition makes sense if we replace the coefficient category $\Vect$ by an 
arbitrary {\it additive} category $\CC$; let us denote the corresponding GGM category 
$\GGM_n(\CC)$.

\subsection{Fourier-Sato transform}
\label{2.2.1}
The {\em Fourier-Sato transform} $\Perv(V_\BC,\CS_\BC)\iso\Perv(V_\BC,\CS_\BC)$ induces
the same named auto-equivalence $\GGM_n\iso\GGM_n$. More generally, for an arbitrary additive
category $\CC$, we have an auto-equivalence \[\on{FT}\colon\GGM_n(\CC)\iso\GGM_n(\CC)\]
that sends an object $\Phi=(\Phi(I),u,v)$ to $\on{FT}(\Phi)=(\Phi'(I):=\Phi(\ol{I}),u',v')$.
Here we set for $i\not\in I\subset[n]$, 
\[u'_i:=-v_i\colon\Phi(\ol{I})\to\Phi(\ol{I}\setminus\{i\}),\ \operatorname{and}\
v'_i:=u_i(1+v_iu_i)^{-1}\colon\Phi(\ol{I}\setminus\{i\})\to\Phi(\ol{I}),\]
cf.~\cite[Proposition 4.5]{bk} and~\cite[(II.2.1), (II.3.2)]{m}.

\subsection{Hyperbolic sheaves}
The second, ``M\"obius dual'', linear algebraic description of $\CM(V_\BC, \CS_\BC)$
is a version of~\cite{ks1}. The category $\CM(V_\BC, \CS_\BC)$ 
is equivalent to a category $\Hyp_n$ of {\it hyperbolic sheaves}.

An object of $\Hyp_n$ is a 
 bisheaf $E$ with values in $\Vect$ on the poset of real strata $\CS$, i.e.

--- a collection of $3^n$ spaces $E(C) = E(I, \epsilon) \in \Vect,\ C\in \CS$,

--- maps $\gamma_{CC'}\colon E(C)\to E(C'),\ \delta_{C'C}\colon E(C)\to E(C'),\ C\leq C'$.

These maps must satisfy the following  conditions: 


{\em Idempotence}: For $C \leq C'$,
\[\gamma_{CC'}\delta_{C'C} = \Id_{E(C')}.\] 

Let $C, C'$ be any pair of cells. There exists a cell $C''\leq C$ and $C''\leq C$, for example 
$C'' = \{0\}$. Define a map 
\[\phi_{CC'} = \gamma_{C''C}\delta_{CC''}.\]
Due to the idempotence axiom this map does not depend on a choice of $C''$. 

{\em Invertibility}: If $\dim C = \dim C' = p$, and $C, C'$ belong to the same linear subspace
$L\subset V$ of dimension $p$ then $\phi_{CC'}$ is invertible. 

The invertibility axiom is equivalent to a seemingly weaker requirement:

{\em Weak invertibility}: The map $\phi_{CC'} = \gamma_{C''C}\delta_{CC''}$ is an isomorphism for all
neighbours.

{\em Mixed commutativity}: $\gamma\delta = \delta\gamma$. That is,
for $C\leq D,\ C'\leq D',\ C\leq C',\ D\leq D'$, the square
\[\begin{CD}  E(C') @>{\delta}>> E(D') \\
@V{\gamma}VV @V{\gamma}VV \\
 E(C) @>{\delta}>> E(D) \\
\end{CD}\]
must commute.

This definition makes sense if we replace the coefficient category $\Vect$ by an 
arbitrary  category $\CC$; let us denote the corresponding category 
$\Hyp_n(\CC)$.

\subsection{Morita equivalence}
\label{2.3}
We have  equivalences of categories 
$L_h\colon  \CM(V_\BC, \CS_\BC) \iso\Hyp_n$ (``hyperbolic, or big localization'') and 
$L_g\colon  \CM(V_\BC, \CS_\BC) \iso\GGM_n$ (``vanishing cycles, or small localization''),
so there should exist equivalences of quiver categories 
\[P\colon\Hyp_n \iso\GGM_n,\ Q\colon\GGM_n \iso\Hyp_n.\]

Below we will construct equivalences
\begin{equation}\label{2.3.1}
  P\colon \Hyp_n(\CC) \iso\GGM_n(\CC),\ Q\colon \GGM_n(\CC) \iso\Hyp_n(\CC)
\end{equation}
for an arbitrary additive category $\CC$.

\subsection{Example $n = 1$}
\label{2.4}
The construction below is a version of~\cite[9.A]{ks1}.  

An object of $\Hyp_1(\CC)$ is a triple  $E = (E_0, E_\pm )$ of objects of $\CC$, and~$4$ maps  
\[\gamma_\pm\colon E_0 \to E_\pm,\ \delta_\pm\colon E_\pm \to E_0\]
such that $\gamma_\pm\delta_\mp$ are isomorphisms.

An object of $\GGM_1(\CC)$ is a couple $\Phi, \Psi\in (\CC)$ and~$2$ maps
\[v\colon \Phi\leftrightarrows \Psi\ : u\]
such that $t:=1+vu$ is an isomorphism.

Let $E \in\Hyp_1(\CC)$; we set $\Psi:= E_+,\ \Phi:=\Ker\gamma_-$, and $v$ is the composition
\[\Phi\hookrightarrow E_0\xrightarrow{\gamma_+} \Psi,\]
while $u$ is the composition
\[\Psi\xrightarrow{\delta_+}E_0\xrightarrow{1+\delta_-\gamma_-}\Phi.\]
This way we get an object
\[P(E) = (\Phi, \Psi, v, u)\in\GGM_1(\CC).\]

In the opposite direction, given $G = (\Phi, \Psi, v, u)\in\GGM_1(\CC)$ with 
$t:=1+vu$, we define 
\[Q(G) = (E_0, E_\pm, \gamma_\pm, \delta_\pm)\in\Hyp_1(\CC)\]
by 
\[E_+ = E_- = \Psi; \ E_0 = \Phi\oplus \Psi;\]
where $\delta_-$ (resp. $\gamma_-$) is given by the obvious inclusion (resp., projection), 
whereas 
\[\gamma_+(x, y) =-v(x) + y,\]
and
\[\delta_+(y) = (u(y), t(y)).\]

\subsection{General $n$}
\label{2.5}
We note that 
\[\GGM_n(\CC)\cong\GGM_1(\GGM_{n-1}(\CC)),\ \text{and}\ 
\Hyp_n(\CC)\cong\Hyp_1(\Hyp_{n-1}(\CC)).\]
Whence we define $P_n,Q_n$ by induction: $P_n$ being given as the composition
\begin{multline}
  \label{2.5.1}
  \Hyp_n(\CC)\cong\Hyp_1(\Hyp_{n-1}(\CC))
  \xrightarrow{P_1}\GGM_1(\Hyp_{n-1}(\CC))\\
\xrightarrow{P_{n-1}}\GGM_1(\GGM_{n-1}(\CC))\cong\GGM_n(\CC),
\end{multline}
and similarly for $Q_n$.

\subsection{Vanishing cycles at the origin: the Master complex}
\label{2.6}
We will give an explicit description of the equivalence $P\colon\Hyp_n \iso\GGM_n$
via vanishing cycles. Given $E = (E(C), \gamma, \delta)\in\Hyp_n$, 
we are going to describe the corresponding GGM-sheaf $P(E) = (\Phi(I),u,v)$.

First let us describe its vanishing cycles at the origin, $\Phi_0 := \Phi([n])$. 
Geometrically, let $M\in \CM(V_\BC, \CS_\BC)$ be the perverse sheaf corresponding to $E$; 
the sheaf of vanishing cycles $\Phi_f(M)$ corresponding to a function 
$f(x) = \sum_{i=1}^n x_i$ is a scyscraper concentrated at $0$, and $\Phi_0$ is its stalk at $0$:
\[\Phi_0 = \Phi_f(M)_0.\]
Algebraically the procedure of computing $\Phi_f(M)$ is described in~\cite{fks}, for an arbitrary 
real hyperplane arrangement. For the coordinate arrangement it reduces to the following.  

We consider the positive cells $C_I$, and denote $E(I) := E(C_I)$. Let 
$\CS^+ \subset \CS$ denote the subset of positive cells.

Let 
\[\on{Sub}_n^i = \{ I\subset[n]\ :\ |I| = n - i\} \cong \CS_i^+ := \{ C\in \CS^+|\ \dim C = i\}.\]
We consider a complex
\begin{equation}
  \label{(2.6.1)}
  \Phi^\bullet(E)\colon 0 \to E([n]) \to \oplus_{I\in \CS_1^+} E(I) \to \ldots 
\to E(\emptyset) \to 0, 
\end{equation}
concentrated in degrees $0, \ldots, n$. 

The differentials are $\pm \gamma$. More precisely, if $J=I\sqcup j$, then the $JI$ matrix coefficient
of the differential is $(-1)^s\gamma_{C(J)C(I)}$, where $s=|\{i\in I\ :\ i<j\}|$.

\begin{prop}
  \label{thm 2.6.1}
  The complex $\Phi^\bullet(E)$ is acyclic in positive degrees, and  
  \[\Phi_0 = H^0(\Phi^\bullet(E)).\]
\end{prop}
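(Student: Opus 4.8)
The plan is to prove this by induction on $n$, exploiting the inductive structure $\Hyp_n(\CC)\cong\Hyp_1(\Hyp_{n-1}(\CC))$ and the explicit form of $P_1$ from \secref{2.4}, together with the compatibility of the master complex with this decomposition. First I would unwind what $P_1$ does at the level of vanishing cycles: in the $n=1$ case one has $\Phi([1])=\Phi=\Ker\gamma_-$, which is exactly $H^0$ of the two-term complex $0\to E([1])\xrightarrow{\gamma_-} E(\emptyset)\to 0$ (the map $\gamma=\gamma_{C([1])C(\emptyset)}$ being $\gamma_-$ after identifying the positive $0$-cell with $E(\emptyset)$), and there is no higher cohomology because $\gamma_-$ is a split surjection by the idempotence axiom $\gamma_-\delta_-=\Id$. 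This establishes the base case and, more importantly, isolates the mechanism: acyclicity comes from idempotence forcing each $\gamma$ along a positive codimension-one inclusion to be a split epimorphism.

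Next I would set up the inductive step. Writing $[n]=[n-1]\sqcup\{n\}$, the positive-cell complex $\Phi^\bullet(E)$ in \eqref{(2.6.1)} decomposes as the total complex of a double complex: one direction records whether $n\in I$ or not, and the two "rows" are, up to sign, the complexes $\Phi^\bullet$ computed for the object of $\Hyp_{n-1}(\CC)$ obtained from $E$ by fixing the $n$-th coordinate to be respectively the positive cell or the origin. Concretely, $E$ viewed in $\Hyp_1(\Hyp_{n-1}(\CC))$ has "$E_+$" and "$E_0$" components which are themselves objects of $\Hyp_{n-1}(\CC)$, and the total complex $\Phi^\bullet(E)$ is the cone (with a sign) of the map of complexes $\Phi^\bullet_{n-1}(\text{$E_0$-part})\to \Phi^\bullet_{n-1}(\text{$E_+$-part})$ induced by $\gamma$ in the $n$-th direction. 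By the induction hypothesis each of these two complexes is acyclic in positive degrees with $H^0$ equal to the corresponding $(n-1)$-dimensional vanishing cycles object. Taking the long exact sequence of the cone, the higher cohomology of $\Phi^\bullet(E)$ vanishes provided the induced map on $H^0$'s, i.e. on the $(n-1)$-dimensional $\Phi$'s, is a split surjection — and this is precisely the statement that $P_1$ applied inside $\Hyp_{n-1}(\CC)$ produces $\Phi([n])=\Ker$ of a split epi, again by idempotence. Then $H^0(\Phi^\bullet(E))=\Ker\bigl(\Phi_0^{(n-1)}(E_+)\to\ldots\bigr)$, which by the definition of $P_1$ in \secref{2.4} and the inductive definition \eqref{2.5.1} of $P_n$ is exactly $\Phi([n])$, the vanishing cycles at the origin of $P(E)$.

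Alternatively — and this may be cleaner to write — one can give a direct geometric argument: $M\in\CM(V_\BC,\CS_\BC)$ is the perverse sheaf attached to $E$, $f=\sum x_i$ has an isolated critical point at $0$ relative to the stratification, and the complex \eqref{(2.6.1)} is, by the hyperbolic calculus of \cite{fks}, a representative of $R\Gamma$ of the Morse datum computing $\Phi_f(M)$. Since $M$ is perverse and $f$ is (stratified) nondegenerate, $\Phi_f(M)$ is concentrated in a single perverse degree, forcing the complex to have cohomology only in degree $0$; that $\Phi_f(M)$ is a skyscraper at $0$ is because $f$ restricted to each stratum $S_I$ with $I\neq\emptyset$ has no critical points away from the origin. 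I would present the combinatorial/inductive proof as the main one and mention the geometric interpretation as a remark.

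The main obstacle I anticipate is the bookkeeping in the inductive step: one must check that the double-complex decomposition of $\Phi^\bullet(E)$ really does identify its two rows with the $(n-1)$-dimensional complexes $\Phi^\bullet_{n-1}$ attached to the $\Hyp_{n-1}(\CC)$-components of $E$ under $\Hyp_n(\CC)\cong\Hyp_1(\Hyp_{n-1}(\CC))$, with signs matching the prescription $(-1)^s\gamma_{C(J)C(I)}$, and that the connecting map on $H^0$ is the one cut out by $P_1$. The sign $s=|\{i\in I:i<j\}|$ is exactly the Koszul sign that makes the total complex of the double complex well-defined, so this should work out, but it requires care. The invocation of idempotence to get split surjectivity at each stage is the only place the hyperbolic axioms enter, and it enters cleanly; everything else is homological algebra of cones.
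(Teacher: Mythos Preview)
The paper does not actually prove this proposition: it simply records ``This is a particular case of~\cite[Theorem 2.3]{fks}'' and moves on. Your inductive argument is therefore a genuinely different route --- a self-contained proof exploiting the factorisation $\Hyp_n(\CC)\cong\Hyp_1(\Hyp_{n-1}(\CC))$ rather than appealing to the general hyperbolic calculus of~\cite{fks}. The strategy is sound: the master complex is indeed the (shifted) cone of the chain map $\gamma_n\colon\Phi^\bullet_{n-1}(E_0\text{-part})\to\Phi^\bullet_{n-1}(E_+\text{-part})$, idempotence makes $\delta_n$ a splitting, and functoriality of $H^0$ then gives both the acyclicity in positive degrees and the identification of $H^0$ with the kernel that $P_1$ cuts out. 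What your approach buys is an elementary proof internal to the paper, avoiding the machinery of~\cite{fks}; what the citation buys is brevity and a statement for arbitrary real arrangements, not just the coordinate one.

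Two bookkeeping corrections. First, in the base case the differential of the two-term complex is $\gamma_+$, not $\gamma_-$: the positive cell $C_\emptyset$ is the ray $x_1>0$, so $\gamma_{C_{[1]}C_\emptyset}=\gamma_+$ and $H^0=\Ker\gamma_+$, whereas \S\ref{2.4} sets $\Phi=\Ker\gamma_-$. These two kernels are canonically isomorphic (both are complements in $E_0$ to copies of $E_\pm$, identified via the invertibility axiom), but you should say so rather than conflate them. Second, in the inductive step you wrote $H^0(\Phi^\bullet(E))=\Ker\bigl(\Phi_0^{(n-1)}(E_+)\to\cdots\bigr)$; the source should be $\Phi_0^{(n-1)}(E_0\text{-part})$, mapping to $\Phi_0^{(n-1)}(E_+\text{-part})$. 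Finally, your remark that idempotence is ``the only place the hyperbolic axioms enter'' is not quite right: for $\delta_n$ to be a chain map --- equivalently, for $\delta_n$ to be a morphism in $\Hyp_{n-1}(\CC)$ under the identification $\Hyp_n\cong\Hyp_1(\Hyp_{n-1})$ --- you need the mixed commutativity axiom $\gamma\delta=\delta\gamma$.
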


This is a particular case of~\cite[Theorem 2.3]{fks}. 
This complex $\Phi^\bullet(E)$ will be called the {\em Master complex of vanishing cycles} 
for a hyperbolic sheaf $E\in\Hyp_n$. 

\subsection{Explicit description of $P$}
Similarly, for an arbitrary $I\subset[n]$, $|I| = p$, let 
$f_I(x) = \sum_{i\in I} x_i$. The sheaf of vanishing cycles  
\[\Phi_{f_I}(M)\]
is supported on the closure of the  stratum $\overline{S(I)}$, and $\Phi(I)$ is the stalk 
of $\Phi_{f_I}(M)$ at $C_I$. It is computed via an exact sequence 
\[0 \to \Phi(I) \to E(I) \to \oplus_{I'\subset I, |I| = p - 1} E(I') \to \ldots \to 
E(\emptyset) \to 0,\]
where the differentials are $\pm \gamma$.

In particular, $\Phi(I)\subset E(I),\ \Phi(I\setminus\{i\})\subset E(I\setminus\{i\})$,
and the variation map $v_i\colon\Phi(I)\to\Phi(I\setminus\{i\})$ (notation of~\S\ref{2.2})
is induced by \[\gamma_{C_IC_{I\setminus\{i\}}}\colon E(I)\to E(I\setminus\{i\}).\]
Finally, the canonical map $u_i\colon\Phi(I\setminus\{i\})\to\Phi(I)$  is induced by the
composition \[E(I\setminus\{i\})\xrightarrow{\delta_{C_{I\setminus\{i\}}C_I}}E(I)
\xrightarrow{1+\delta_-\gamma_-}E(I),\]
where $E(I)=E(C_I)=E(C_{I,\epsilon})\stackrel{\gamma_-}{\longrightarrow}E(C_{I\setminus\{i\},\epsilon'})
\stackrel{\delta_-}{\longrightarrow}E(C_{I,\epsilon})=E(C_I)=E_I$, and $\epsilon'$ consists of all
positive signs except for one negative sign of $i$.

\begin{rem}
\label{2.7}
We will not give an explicit description of the equivalence $Q\colon\GGM_n \iso\Hyp_n$;
we just mention that for $\CG = (\Phi(I),u,v)$, the {\em positive} hyperbolic stalks of
the corresponding  $E = Q(\CG) = (E(C),\gamma,\delta)\in\Hyp_n$ are given by
\[E(I) = \oplus_{J\subset I} \Phi(J)\]
(``Takeuchi formula'', cf.~\cite[4.C.(1)]{ks1} and~\cite{t}).
\end{rem}

\subsection{Amonodromic semisimple sheaves}
\label{2.8}
Fix a semisimple abelian category $\CC$. The category of perverse sheaves
$\CM_\CC(V_\BC,\CS_\BC)=\Perv_\CC(V_\BC,\CS_\BC)$ with coefficients in $\CC$ contains a full semisimple
subcategory $\Perv^\ass_\CC(V_\BC,\CS_\BC)$ formed by the direct sums of constant sheaves along the
strata closures. Under the equivalences $\Perv_\CC(V_\BC,\CS_\BC)\simeq\Hyp_n(\CC)$ and
$\Perv_\CC(V_\BC,\CS_\BC)\simeq\GGM_n(\CC)$ the subcategory
$\Perv^\ass_\CC(V_\BC,\CS_\BC)\subset\Perv_\CC(V_\BC,\CS_\BC)$ corresponds
to the full semisimple subcategories $\Hyp^\ass_n(\CC)\subset\Hyp_n(\CC)$ and
$\GGM^\ass_n(\CC)\subset\GGM_n(\CC)$.

Namely, $\GGM^\ass_n(\CC)\subset\GGM_n(\CC)$ consists of all GGM sheaves $(\Phi(I),u,v)$ as
in~\S\ref{2.2} with all $u_{IJ} = v_{JI} = 0$.

The description of $\Hyp^\ass_n(\CC)\subset\Hyp_n(\CC)$ is a little bit more elaborate.
Given two cells $C_{(I,\epsilon)}\leq C_{(I',\epsilon')}$ as in~\S\ref{2.1}, we define the
{\em reflected} cell $C_{(I'',\epsilon'')}$ as follows. Recall from~\S\ref{2.1} that $(I,\epsilon)$ is
obtained from $(I',\epsilon')$ be replacing some elements $\epsilon'_j$ in the sequence $\epsilon'$
by $0$'s. Now to obtain $(I'',\epsilon'')$ we replace these elements by the opposite ones (i.e.\
change their signs). In particular, $I''=I'$, and $\dim C_{(I',\epsilon')}=\dim C_{(I'',\epsilon'')}$.

Finally, a hyperbolic sheaf $(E(C),\gamma,\delta)$ lies in $\Hyp^\ass_n(\CC)$ if for any cells
$C_{(I,\epsilon)}\leq C_{(I',\epsilon')}$ and the corresponding reflected cell $C_{(I'',\epsilon'')}$ we have
\begin{equation}
  \label{Hyp ass}
  \Ker\gamma_{C_{(I,\epsilon)}C_{(I',\epsilon')}}=\Ker\gamma_{C_{(I,\epsilon)}C_{(I'',\epsilon'')}}\ \on{and}\
  \on{Im}\delta_{C_{(I',\epsilon')}C_{(I,\epsilon)}}=\on{Im}\delta_{C_{(I'',\epsilon'')}C_{(I,\epsilon)}}.
\end{equation}

The equivalence $P\colon\Hyp^\ass_n(\CC)\iso\GGM^\ass_n(\CC)$ takes $(E(C),\gamma,\delta)$ to
$(\Phi(I),0,0)$, where $\Phi(I):=\bigcap_{J\subsetneqq I}\Ker\gamma_{C_IC_J}$.

The inverse equivalence $Q\colon\GGM^\ass_n(\CC)\iso\Hyp^\ass_n$ takes $(\Phi(I),0,0)$ to
$(E(C),\gamma,\delta)$, where $E(C_{(I,\epsilon)}):=\bigoplus_{J\subset I}\Phi(J)$, while $\gamma$ are
given by the natural embeddings of direct summands, and $\delta$ are given by the natural projections
onto direct summands.

\begin{rem}
The autoequivalence of~\S\ref{2.2.1}
\[\on{FT}\colon\GGM_n(\CC) \iso\GGM_n(\CC)\]
induces an involutive autoequivalence  
\begin{equation}
  \label{2.8.2}
  \on{FT}\colon\GGM^\ass_n(\CC) \iso\GGM^\ass_n(\CC),\ (\Phi(I),0,0)\mapsto(\Phi(\ol{I}),0,0).
\end{equation}
The corresponding involutive autoequivalence
\begin{equation}
  \label{2.8.3}\on{FT}\colon\Hyp^\ass_n(\CC) \iso\Hyp^\ass_n(\CC)
\end{equation}
is a particular case of~\cite[Theorem 4.15]{fks}.
\end{rem}

\section{Kostka numbers}
In the next section we will study the categories of amonodromic semisimple perverse sheaves
$\Hyp^\ass_n(\CC),\ \GGM^\ass_n(\CC)$ for the category $\CC=\Rep(S_{n+1})$ of representations of
the symmetric group $S_{n+1}$ 
on $n+1$ letters over a field $\sk$ of characteristic~0.

In this section we prepare some basic material about 
Young tableaux, Kostka numbers, and $\Rep(S_{n+1})$.
We also discuss a generalization of Kostka numbers 
to a finite Coxeter group $W$.
 

\subsection{Partitions and compositions}
\label{1.1}
A {\em partition} $\lambda$ of $m$ is a sequence
of integers 
\[
\lambda = (\lambda_1, \ldots, \lambda_k),\ 
\lambda_1\geq \ldots \geq \lambda_k>0,\
\sum \lambda_i = m. 
\]
The {\em Young diagram} of a partition $\lambda$ is the set
$\{(i,j)\in\mathbb{Z}^2\mid 1\leq i\leq k,\ 1\leq j\leq \lambda_i\}$.
The elements $(i,j)$ of a Young diagram, called {\em boxes}, 
are arranged on the plane and indexed as elements
of a matrix, i.e., $i$ is the row index of box $(i,j)$ 
and $j$ is its column index.
Slightly abusing notation, we identify a partition $\lambda$ 
with its Young diagram and denote the latter by the same letter $\lambda$.
For example, we have 
\[ 
\lambda = (3,3,1) = 
\ydiagram{3,3,1}
\]

Let $\fP_{m}$ be the set of partitions of $m$, equivalently, the set of Young diagrams with $m$ boxes, 
and let
$\fP := \bigcup_{m\geq 0} \fP_{m}$.

A {\em composition} $\beta$ of $m$ is a sequence 
positive integers
\[
\beta = (\beta_1, \ldots , \beta_k)
\textrm{ with }
\sum_i \beta_i = m.
\]
Let $\fQ_{m}$ be the set of all compositions of $m$.
We have an obvious inclusion  
\begin{equation}
  \label{(1.1.1)}
  \iota\colon \fP_m\hookrightarrow \fQ_m
\end{equation}
with a left inverse $\fQ_m\to \fP_m$,
that associates to a composition its weakly decreasing permutation.

\subsection{Young tableaux}

For a Young diagram $\lambda\in\fP$, 
a {\em semi-standard Young tableau} $T$ of {\em shape} 
$\lambda$ is a filling of boxes of $\lambda$ by positive integers 
such that the entries are weakly increasing 
along each row and strictly increasing along each column of $T$.
The {\em weight} of $T$ is a nonnegative integer sequence
$\beta = (\beta_1, \ldots , \beta_k)$, 
where $\beta_i$ is the number of $i$'s in $T$.
(Here we identify weights $\beta$ obtained from each other by adding 0's at the end:
$\beta=(\beta_1,\dots,\beta_k) = (\beta_1,\dots,\beta_k,0,\dots,0)$.)
A {\em standard Young tableau} $T$
a semi-standard Young tableau 
of weight $\beta = (1, \ldots, 1)$.
We denote by $\on{SSYT}(\lambda,\beta)$  the set
of all semi-standard Young tableaux of shape $\lambda$ and weight $\beta$.  
Also $\on{SYT}(\lambda)$ denotes the set
of all standard Young tableaux of shape $\lambda$.  
For example, here is a semi-standard Young tableau $T \in\on{SSYT}((4,3,1),(2,2,3,1))$ 
and a standard Young tableau
$T'\in\on{SYT}((4,3,1))$:
\[
T=
\begin{ytableau}
1 & 1 & 2 & 3 \\
2 & 3 & 3 \\
4
\end{ytableau}
\qquad
\qquad
T' =
\begin{ytableau}
1 & 2 & 4 & 7 \\
3 & 5 & 6 \\
8
\end{ytableau}
\]

Clearly, $\#\on{SSYT}(\lambda,\beta) = \#\on{SSYT}(\lambda,\beta')$,
where $\beta'$ is the composition obtained from weight $\beta$ by removing all 0's.

\subsection{Kostka numbers}

For a partition $\lambda$ and a composition $\beta$, the {\em Kostka number} 
$K_{\lambda,\beta}$ is defined as
the number of semi-standard Young tableaux of shape $\lambda$ 
and weight $\beta$: 
\[
K_{\lambda,\beta} := \# \on{SSYT}(\lambda,\beta).
\]

The Kostka numbers $K_{\lambda,\beta}$ 
are known to be
invariant under permutations of parts of $\beta$.
Thus, without loss of generality,  
the Kostka numbers can be labelled by a pair of partitions
$\lambda, \mu \in \fP$, 
$K_{\lambda\mu} = K_{\lambda, \iota(\mu)}$, 
where $\iota$ denotes the inclusion~\eqref{(1.1.1)}. 
If $\mu = \lambda$, the only semi-standard Young tableau of the same shape and weight 
$\lambda$ is obtained by filling the $i$th row of $\lambda$ all all $i$'s, for $i=1,2,\dots$.
So $K_{\lambda\lambda} = 1$.

\subsection{Small Kostka numbers} 
\label{1.4.1}

Let us fix a nonnegative integer $n$.  Let $[n]:= \{1, \ldots, n\}$,
and let $\on{Sub}_n:=2^{[n]}$ denote the set of subsets $I\subset [n]$.
The set $\on{Sub}_n$ is in bijection 
with the set $\fQ_{n+1}$ of compositions of $n+1$.


\begin{lem}
  \label{1.1.1}
The following two maps
$\varrho=\varrho_n\colon\on{Sub}_n 
\overset{\sim}\to 
\fQ_{n+1}$
and 
 $\varrho^{-1} 
\colon\fQ_{n+1} 
\overset{\sim}\to 
\on{Sub}_n$
are bijective and inverse to each other:
\[
\begin{array}{l}
\varrho: 
I=\{i_1<i_2<\ldots<i_r\} \mapsto
\beta=
(i_1,i_2-i_1,\ldots,i_r-i_{r-1},n+1-i_r),\\[.1in]
\varrho^{-1}: \beta= (\beta_1, \ldots, \beta_k)\mapsto
I=\{\beta_1,  \beta_1 + \beta_2,  \ldots, 
\beta_1 + \ldots + \beta_{k-1} \}.
\end{array}
\]

\end{lem}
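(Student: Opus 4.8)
The plan is to verify directly that $\varrho$ and $\varrho^{-1}$ are well-defined maps between the stated sets and that their composites (in both orders) are the identity. First I would check that $\varrho$ lands in $\fQ_{n+1}$: given $I = \{i_1 < i_2 < \cdots < i_r\} \subseteq [n]$, each entry $i_1, i_2 - i_1, \ldots, i_r - i_{r-1}, n+1-i_r$ is a positive integer (strict increase of the $i_j$ gives positivity of the differences, and $i_r \leq n$ gives $n+1-i_r \geq 1$), and the entries telescope to $i_1 + (i_2 - i_1) + \cdots + (i_r - i_{r-1}) + (n+1-i_r) = n+1$. So $\varrho(I) \in \fQ_{n+1}$. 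Symmetrically, for $\beta = (\beta_1, \ldots, \beta_k) \in \fQ_{n+1}$, the partial sums $\beta_1 < \beta_1 + \beta_2 < \cdots < \beta_1 + \cdots + \beta_{k-1}$ form a strictly increasing sequence of positive integers, all strictly less than $\beta_1 + \cdots + \beta_k = n+1$, hence contained in $[n]$; thus $\varrho^{-1}(\beta) \in \on{Sub}_n$.

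Next I would check $\varrho^{-1} \circ \varrho = \id_{\on{Sub}_n}$. Starting from $I = \{i_1 < \cdots < i_r\}$, apply $\varrho$ to get $\beta = (i_1, i_2 - i_1, \ldots, i_r - i_{r-1}, n+1-i_r)$, a composition with $k = r+1$ parts. Its first $k-1 = r$ partial sums are $i_1$, $i_1 + (i_2 - i_1) = i_2$, \ldots, up to $i_1 + (i_2-i_1) + \cdots + (i_r - i_{r-1}) = i_r$; so $\varrho^{-1}(\beta) = \{i_1, \ldots, i_r\} = I$. For the other composite $\varrho \circ \varrho^{-1} = \id_{\fQ_{n+1}}$: starting from $\beta = (\beta_1, \ldots, \beta_k)$, set $I = \{s_1 < \cdots < s_{k-1}\}$ with $s_j = \beta_1 + \cdots + \beta_j$; then $\varrho(I) = (s_1, s_2 - s_1, \ldots, s_{k-1} - s_{k-2}, n+1 - s_{k-1}) = (\beta_1, \beta_2, \ldots, \beta_{k-1}, n+1 - (\beta_1 + \cdots + \beta_{k-1}))$, and since $\beta_1 + \cdots + \beta_k = n+1$ the last entry equals $\beta_k$, recovering $\beta$. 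Since both composites are identities, $\varrho$ and $\varrho^{-1}$ are mutually inverse bijections, which also yields the claimed bijection $\on{Sub}_n \cong \fQ_{n+1}$ asserted just before the lemma.

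This argument is entirely elementary bookkeeping; there is no real obstacle, the only point requiring a moment's care being the edge cases — $I = \emptyset$ (where $r = 0$, so $\varrho(\emptyset) = (n+1)$, the one-part composition) and $k = 1$ (where $\varrho^{-1}((n+1)) = \emptyset$) — which the telescoping identities handle uniformly once one adopts the convention that an empty sum is $0$.
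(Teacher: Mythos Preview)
Your proof is correct and complete; the paper itself states this lemma without proof, treating the bijection as self-evident. Your direct verification of well-definedness and of both composites being the identity is exactly the elementary check the statement calls for, including the edge cases $I=\emptyset$ and $k=1$.
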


For example, $\varrho([n]) = (1,1,\ldots,1)$ and $\varrho(\emptyset) = (n+1)$.

\begin{defn}
  \label{1.4.2}
  For $\lambda\in \fP_{n+1}$ and $I\in\on{Sub}_n$, the {\em small Kostka number} 
$\kappa_{\lambda,I}$ 
is defined by 
\[\kappa_{\lambda, I} := \sum_{J \subset I} (-1)^{|J|-|I|} K_{\lambda,\varrho(J)}.\] 
\end{defn}

\begin{prop}
  \label{1.4.3} 
For $\lambda\in\fP_{n+1}$ and 
$I\in\on{Sub}_n$,
we have 
  \begin{equation}
    \label{1.4.3.1b}
K_{\lambda,\varrho(I)}=\sum_{J\subset I}\kappa_{\lambda,J},
\end{equation}
or, equivalently,
  $K_{\lambda,\beta}=\sum_{J\subset\varrho^{-1}(\beta)}\kappa_{\lambda,J}$.
This formula defines the numbers $\kappa_{\lambda,I}$ uniquely.
\end{prop}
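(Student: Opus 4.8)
The plan is to observe that Definition~\ref{1.4.2} is nothing but Möbius inversion on the Boolean lattice $(\on{Sub}_n,\subset)$, whose Möbius function is $\mu(J,I)=(-1)^{|I|-|J|}$ for $J\subset I$. Fixing $\lambda\in\fP_{n+1}$ and viewing $I\mapsto K_{\lambda,\varrho(I)}$ and $I\mapsto\kappa_{\lambda,I}$ as functions $\on{Sub}_n\to\BZ$, the point is that $(-1)^{|J|-|I|}=(-1)^{|I|-|J|}$, so Definition~\ref{1.4.2} says precisely that $\kappa_{\lambda,\bullet}$ is the Möbius transform of $I\mapsto K_{\lambda,\varrho(I)}$; formula~\eqref{1.4.3.1b} is then its inverse (zeta) transform, and the uniqueness assertion is just the invertibility of the zeta transform.

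To keep the argument self-contained, I would verify~\eqref{1.4.3.1b} directly: substitute Definition~\ref{1.4.2} into the right-hand side and interchange the two summations to get
\[
  \sum_{J\subset I}\kappa_{\lambda,J}
  =\sum_{J\subset I}\sum_{K\subset J}(-1)^{|J|-|K|}K_{\lambda,\varrho(K)}
  =\sum_{K\subset I}K_{\lambda,\varrho(K)}\Bigl(\sum_{K\subset J\subset I}(-1)^{|J|-|K|}\Bigr).
\]
For fixed $K\subset I$ the inner sum is $\sum_{j=0}^{m}\binom{m}{j}(-1)^{j}=(1-1)^{m}$ with $m=|I\setminus K|$, which equals $1$ when $K=I$ and $0$ otherwise; hence the whole expression collapses to $K_{\lambda,\varrho(I)}$, which is~\eqref{1.4.3.1b}.

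For uniqueness, I would note that~\eqref{1.4.3.1b} determines the $\kappa_{\lambda,I}$ recursively by induction on $|I|$, starting from $\kappa_{\lambda,\emptyset}=K_{\lambda,\varrho(\emptyset)}=K_{\lambda,(n+1)}$ (equivalently, $f\mapsto\bigl(I\mapsto\sum_{J\subset I}f(J)\bigr)$ is an invertible endomorphism of the finite-dimensional space of $\BZ$-valued functions on $\on{Sub}_n$), so $\kappa_{\lambda,\bullet}$ is its unique solution. Finally, the two displayed forms of the proposition are the same equations reindexed: setting $\beta=\varrho(I)$ and using that $\varrho\colon\on{Sub}_n\iso\fQ_{n+1}$ is a bijection (Lemma~\ref{1.1.1}), formula~\eqref{1.4.3.1b} becomes $K_{\lambda,\beta}=\sum_{J\subset\varrho^{-1}(\beta)}\kappa_{\lambda,J}$. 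There is no genuine obstacle here; the only points that need a moment's care are the sign identity $(-1)^{|J|-|I|}=(-1)^{|I|-|J|}$ and the elementary binomial cancellation $\sum_{K\subset J\subset I}(-1)^{|J|-|K|}=\delta_{IK}$.
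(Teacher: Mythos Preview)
Your proposal is correct and takes essentially the same approach as the paper: the paper's proof is the single sentence ``The claim follows from the M\"obius inversion formula,'' and your argument is precisely a spelled-out version of that inversion on the Boolean lattice $\on{Sub}_n$, together with the elementary uniqueness observation.
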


\begin{proof} The claim follows from the M\"obius inversion formula.
\end{proof}

\begin{example}  
Let $n=2$ and $\lambda=(2,1)\in\fP_3$.
We have $K_{\lambda,\rho(\emptyset)} = K_{\lambda,(3)} = 0$,
$K_{\lambda,\rho(\{1\})} = K_{\lambda,(1,2)}= 1$,
$K_{\lambda,\rho(\{2\})} = K_{\lambda,(2,1)}= 1$,
$K_{\lambda,\rho(\{1,2\})} = K_{\lambda,(1,1,1)}= 2$.
Thus the small Kostka numbers are
$\kappa_{\lambda,\emptyset} = 0$,
$\kappa_{\lambda,\{1\}} = 1-0 = 1$,
$\kappa_{\lambda,\{2\}} = 1-0 = 1$,
$\kappa_{\lambda,\{1,2\}} = 2-1-1+0 = 0$.
\end{example}

\begin{rem}
  For $n\leq 4$, all small Kostka numbers $\kappa_{\lambda,I}$ are either~0 or~1. 
  For $n\geq 5$, the numbers $\kappa_{\lambda,I}$ can be greater than 1.
  For example, for $n=3k-1$, we have
$\kappa_{(2k-1,k,1),\{k,2k\}}=k$. 
\end{rem}


For $\lambda\in\fP_{n+1}$, let $\lambda^t$ denote the {\em conjugate partition} 
of $\lambda$, i.e., the partition whose Young diagram is obtained 
from $\lambda$ by transposition. 
For $I\in \on{Sub}_n$, let $\ol{I} := [n]\setminus I$.

\begin{thm}
  \label{th:small_Kostka_nonneg_symm}
{\rm (1)}
The small Kostka numbers $\kappa_{\lambda,I}$ are non-negative integers.
\smallskip

{\rm (2)} 
The small Kostka numbers have the symmetry
$\kappa_{\lambda,I} = \kappa_{\lambda^t,\ol{I}}$.
\end{thm}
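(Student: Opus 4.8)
The claim has a purely combinatorial content, so the plan is to give a combinatorial proof of both parts, using the description of $\kappa_{\lambda,I}$ as a signed sum of ordinary Kostka numbers and the representation-theoretic interpretation of those.

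\textbf{Part (1), nonnegativity.} First I would unwind the defining relation $K_{\lambda,\varrho(I)}=\sum_{J\subset I}\kappa_{\lambda,J}$ of \propref{1.4.3}. Under the bijection $\varrho$ of \lemref{1.1.1}, the composition $\varrho(I)$ refines $\varrho(J)$ precisely when $J\subset I$; equivalently, the Young subgroup $S_{\varrho(I)}\subset S_{n+1}$ is contained in $S_{\varrho(J)}$. Now $K_{\lambda,\beta}$ is the multiplicity of the irreducible $V_\lambda$ in the permutation module $M^\beta=\operatorname{Ind}_{S_\beta}^{S_{n+1}}\mathbf 1$, and $M^{\varrho(I)}$ is filtered (as an $S_{n+1}$-module, using only that we are in characteristic $0$) so that $M^{\varrho(I)}=\bigoplus_{J\subset I}N_J$ for suitable virtual — in fact genuine — modules $N_J$: concretely one takes $N_J$ to be the $S_{n+1}$-span of the ``alternating-over-the-extra-generators'' vector inside $M^{\varrho(I)}$, i.e. the image of the idempotent that symmetrizes over $S_{\varrho(I)}$ and antisymmetrizes over the reflections $s_i$, $i\in I$, that do not lie in $S_{\varrho(I)}$. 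Then $\kappa_{\lambda,I}$ is exactly the multiplicity of $V_\lambda$ in $N_I$, hence a nonnegative integer. I expect the bookkeeping that $N_I$ is a genuine (not merely virtual) module — equivalently that the relevant idempotents are orthogonal and sum to $1$ — to be the one step that needs care; it should follow from the fact that the Young subgroup $S_{\varrho(I)}$ is generated by those simple reflections $s_j$ with $j\notin I$, so that $S_{n+1}$ decomposes this permutation module by an inclusion–exclusion over subsets of the ``missing'' simple reflections, indexed exactly by $\on{Sub}_I$. Alternatively, and perhaps more cleanly, I would cite the combinatorial identity $\kappa_{\lambda,I}=\#\{T\in\on{SYT}(\lambda):\Des(T)=I\}$ stated as \thmref{1.6} in the introduction: descent sets partition $\on{SYT}(\lambda)$, and $K_{\lambda,\varrho(I)}=\#\{T:\Des(T)\subset I\}$, so the M\"obius inversion of \propref{1.4.3} forces $\kappa_{\lambda,I}$ to count tableaux with descent set exactly $I$, which is manifestly $\geq 0$.

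\textbf{Part (2), the symmetry $\kappa_{\lambda,I}=\kappa_{\lambda^t,\ol I}$.} Given the descent-set interpretation, this is the classical fact that transposing a standard Young tableau $T$ of shape $\lambda$ to $T^t$ of shape $\lambda^t$ sends the descent set to its complement: $i\in\Des(T)$ (i.e. $i+1$ lies in a strictly lower row than $i$ in $T$) iff $i\notin\Des(T^t)$ (i.e. $i+1$ lies weakly to the right in $T^t$... more precisely $i$ and $i+1$ are never in the same row of a standard tableau, and transposition swaps ``lower row'' with ``same-or-later column vs.\ earlier column''), so transposition is a bijection $\on{SYT}(\lambda)_I\iso\on{SYT}(\lambda^t)_{\ol I}$. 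If one prefers not to invoke \thmref{1.6}, I would instead derive the symmetry directly from the Fourier-transform statement recorded in the excerpt: by \eqref{2.8.2}, $\on{FT}$ sends the amonodromic sheaf with $\Phi(J)=V_\lambda^{\oplus\kappa_{\lambda,J}}$ (this is $\CF_{(n+1)}$ in the introduction's notation, once one checks its hyperbolic stalks are the claimed inductions) to the one with $\Phi(J)\mapsto\Phi(\ol J)$, while on the other hand $\on{FT}\CF_\mu\cong\CF_{\mu^t}$ from the hyperbolic calculus of \cite{fks}; comparing the multiplicity of $V_\lambda$ at the stalk $\ol I$ on one side with that of $V_{\lambda^t}$ at $I$... wait, one has to be a little careful about which partition indexes the sheaf versus which indexes the simple summand, so the cleanest route really is: apply $\on{FT}$ to the semisimple object $\CF$ whose stalk at $J$ is $\bigoplus_\lambda V_\lambda^{\oplus\kappa_{\lambda,\varrho(J)\text{-ish}}}$ and read off \eqref{2.8.2}. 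I expect the main obstacle in this second route to be purely notational — matching the sheaf-theoretic indexing to the combinatorial one — rather than mathematical; the honest recommendation is to prove \thmref{1.6} first (it is itself a Möbius inversion of the standard expansion $\#\{T:\Des(T)\subseteq I\}=K_{\lambda,\varrho(I)}$, which holds because a standard tableau has no descent inside a horizontal run and the RSK/Schensted row-reading gives the bijection) and then deduce both (1) and (2) from it in two lines, as above.
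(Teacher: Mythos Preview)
Your proposal is correct, and the route you ultimately recommend --- prove \thmref{1.6} (via standardization and M\"obius inversion) and then deduce (1) from nonnegativity of a cardinality and (2) from the bijection $T\mapsto T^t$ sending $\Des(T)$ to its complement --- is exactly the paper's argument. The representation-theoretic alternative you sketch for (1) (decompose $M_I$ into ribbon pieces and read off multiplicities) is also developed in the paper as \corref{th:small_kostka=multiplicity}, so both of your approaches appear there; the Fourier-transform route you float for (2) is not used for this theorem.
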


This theorem follows from a combinatorial and a representation-theoretical
interpretations of the small Kostka numbers $\kappa_{\lambda, I}$ given below, 
see Theorem~\ref{1.6} and Corollary~\ref{th:small_kostka=multiplicity}.

\subsection{Descents in Young tableaux}

For a standard Young tableau $T\in\on{SYT}(\lambda)$ of shape 
$\lambda\in \fP_{n+1}$,   we say that  $i\in [n]$ is a {\em descent} of $T$ 
if the entry $i+1$ is located in $T$ below the entry $i$, i.e., 
the row containing $i+1$ is below the row containing $i$. 
The {\em descent set} $\on{Des}(T) := \{i\in [n]\mid i\textrm{ is a descent of } T\} \in \on{Sub}_n$, the set of all descents of $T$. 

Let $T^t$ denote the standard Young tableau obtained by transposing $T$.
Clearly, the descent set of $T^t$ is the complement 
to the descent set of $T$:
$\on{Des}(T^t) = [n]\setminus \on{Des}(T)$.  For example, for  
\[T =
\begin{ytableau}
1 & 2 & 4 & 8 \\
3 & 5 & 7\\
6
\end{ytableau}
\qquad
\textrm{and} 
\qquad
T^t =
\begin{ytableau}
1 & 3 & 6\\
2 & 5\\
4 & 7 \\
8
\end{ytableau}
\]
we have $\on{Des}(T) = \{2, 4, 5\}$
and $\on{Des}(T') = [7]\setminus\{2,4,5\}=
\{1,3,6,7\}$.

Theorem~\ref{th:small_Kostka_nonneg_symm} follows immediately from
the following claim.

\begin{thm}
  \label{1.6}
  The small Kostka number $\kappa_{\lambda,I}$ equals the number of 
standard Young tableaux  of shape $\lambda$ with descent set
  $\on{Des}(T) = I$.
\end{thm}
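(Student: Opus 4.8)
The plan is to prove the statement by establishing that both sides of the claimed equality satisfy the same ``summation rule'' over subsets, namely the one that, by \propref{1.4.3}, characterizes the small Kostka numbers uniquely. Concretely, for $I \in \on{Sub}_n$ write $d_{\lambda,I}$ for the number of standard Young tableaux of shape $\lambda$ with $\on{Des}(T) = I$. It suffices to show
\[
K_{\lambda,\varrho(I)} = \sum_{J \subset I} d_{\lambda,J},
\]
since \propref{1.4.3} says this identity has $\kappa_{\lambda,I}$ as its unique solution, and hence forces $d_{\lambda,I} = \kappa_{\lambda,I}$.

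The first step is to recall the classical combinatorial fact that $K_{\lambda,\varrho(I)}$ counts standard Young tableaux of shape $\lambda$ whose descent set is \emph{contained} in $I$. This is standard: given a composition $\beta = \varrho(I) = (\beta_1,\dots,\beta_k)$ of $n+1$, one encodes a semistandard tableau of shape $\lambda$ and content $\beta$ as a standard tableau by ``standardizing'' — replacing the $\beta_1$ many $1$'s (reading left to right), then the $\beta_2$ many $2$'s, etc., by the integers $1,\dots,n+1$ in order. Because within each block of equal entries the columns were already strictly increasing and the rows weakly increasing, the only place a new descent can be forced is at a boundary between blocks; conversely a standard tableau with $\on{Des}(T) \subset I = \{\beta_1, \beta_1{+}\beta_2, \dots\}$ can be ``destandardized'' to a semistandard tableau of content $\beta$ by collapsing the integers in each block $\{1,\dots,\beta_1\}$, $\{\beta_1{+}1,\dots,\beta_1{+}\beta_2\}$, \dots to a constant. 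Carrying out this bijection carefully — checking the semistandardness conditions translate exactly into the condition $\on{Des}(T)\subset I$ — gives $K_{\lambda,\varrho(I)} = \#\{T \in \on{SYT}(\lambda) : \on{Des}(T) \subset I\}$.

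The second step is then immediate: partition the set $\{T \in \on{SYT}(\lambda) : \on{Des}(T)\subset I\}$ according to the exact descent set, yielding $K_{\lambda,\varrho(I)} = \sum_{J\subset I} d_{\lambda,J}$, which is exactly \eqref{1.4.3.1b} with $d$ in place of $\kappa$. By the uniqueness clause of \propref{1.4.3} (equivalently, M\"obius inversion on the Boolean lattice $\on{Sub}_n$, which also directly gives $d_{\lambda,I} = \sum_{J\subset I}(-1)^{|J|-|I|}K_{\lambda,\varrho(J)} = \kappa_{\lambda,I}$), we conclude $d_{\lambda,I} = \kappa_{\lambda,I}$.

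The main obstacle is the first step: making the standardization/destandardization bijection fully rigorous, in particular pinning down precisely which block boundaries become descents and verifying that a standard tableau destandardizes to a \emph{valid} semistandard tableau exactly when its descent set sits inside $I$. This is where one must be careful about the reading-order convention for ties and about the interaction of the column-strictness condition with the block structure; everything after that is formal bookkeeping (summation over subsets and M\"obius inversion, already invoked in the proof of \propref{1.4.3}).
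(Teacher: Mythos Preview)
Your proposal is correct and follows essentially the same argument as the paper: both proofs use the standardization bijection between $\on{SSYT}(\lambda,\beta)$ and the set of standard tableaux of shape $\lambda$ with descent set contained in $\varrho^{-1}(\beta)$, then observe that partitioning by exact descent set reproduces the defining relation~\eqref{1.4.3.1b}, whence the uniqueness clause of \propref{1.4.3} (M\"obius inversion) forces equality. The paper's write-up is slightly terser, asserting the bijection rather than sketching the destandardization inverse, but the strategy is identical.
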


\begin{proof}

Define the {\em standardization map}
\[
\on{std}:
\on{SSYT}(\lambda,\beta)\to 
\on{SYT}(\lambda) 
\]
as follows.
For a semi-standard Young tableau $T$ 
of weight $\beta = (\beta_1,\dots,\beta_k)$,
the {\em standardization} $\on{std}(T)$ of $T$
is the standard Young tableau obtained from $T$ by replacing 
its $\beta_i$ entries $i$ by the integers 
in the interval $[\beta_1+\cdots+\beta_{i-1} + 1, \beta_1+\cdots+\beta_i]$
arranged in the increasing order from left to right, for $i=1,\dots,k$.
For example, we have
\[
T=
\begin{ytableau}
1 & 1 & 2 & 3 \\
2 & 3 & 3 \\
4
\end{ytableau}
\quad
\longmapsto
\quad
\on{std}(T) =
\begin{ytableau}
1 & 2 & 4 & 7 \\
3 & 5 & 6 \\
8
\end{ytableau}
\]


The standardization map gives a bijection between the set
$\on{SSYT}(\lambda,\beta)$ of all semi-standard Young tableaux $T$ of shape
$\lambda$ and weight $\beta$  and the subset of standard Young tableaux $T'\in
\on{SYT}(\lambda)$ that have no descents in the union of intervals
\[
[1,\beta_1 - 1] \cup 
[\beta_1 + 1,\beta_1 + \beta_2 - 1] \cup \cdots \cup
[\beta_1+\cdots+\beta_{k-1}+1,\beta_1+\cdots+\beta_k-1],
\]
or, equivalently, $\on{Des}(T')\subset \varrho^{-1}(\beta) = 
\{\beta_1,\beta_1+\beta_2,\dots,\beta_1+\beta_2+\cdots+\beta_{k-1}\}$:
\[
\on{std} \colon \on{SSYT}(\lambda,\beta)\iso
\{T'\in \on{SYT}(\lambda) \mid \on{Des}(T')\subset \varrho^{-1}(\beta)\}.
\]

Let $\tilde\kappa_{\lambda,I}$ be the number of standard Young tableaux of 
shape $\lambda$ with descent set $I$.
We obtain
\[
K_{\lambda,\beta} = \sum_{J\subset \varrho^{-1}(\beta)} \tilde \kappa_{\lambda,J},
\]
which is exactly equation~\eqref{1.4.3.1b} that defines the small Kostka 
numbers.
Thus $\kappa_{\lambda,I} = \tilde\kappa_{\lambda,I}$, as needed.
\end{proof}

%





\subsection{Skew Young diagrams}

For two Young diagrams $\lambda, \mu \in\fP$ such that $\lambda \supseteq \mu$, 
i.e., $\lambda$ contains $\mu$ as a subset, the {\em skew Young
diagram} $\lambda/\mu$ is the set-theoretic difference of the Young diagrams
$\lambda$ and $\mu$.
For example, for $\lambda = (3,3,1)$ and $\mu=(2,1)$, 
we have
\[
\lambda/\mu = 
\ydiagram{2+1,1+2,1}
\]
Usual Young diagrams can be considered as a special case of skew Young diagrams
$\lambda/\mu$ with $\mu=\emptyset$.

A {\em connected component} of a skew Young diagram $\lambda/\mu$ is a connected 
component of the graph on the set
of boxes $(i,j)\in \lambda/\mu$ with the edge set 
$\{((i,j),(i',j')) \mid |i-i'|+|j-j'|=1\}$.
For example, the above skew Young diagram has two connected components.

We consider two special types of skew Young diagrams: horizontal strips and
ribbons.\footnote{Ribbons appear in the literature under many different names.
They are MacMahon's zigzag diagrams, see
\cite{Mac}.  They also called rim hooks, border strips, etc.}
A skew Young diagram $\lambda/\mu$ is a {\em horizontal strip} if
each connected component of $\lambda/\mu$ consists of a single row of boxes.  A skew
Young diagram is a {\em ribbon} if it contains no $2\times 2$ square and
has exactly one connected component.

For a composition $\beta=(\beta_1,\dots,\beta_k)$, let 
$\on{hstrip}(\beta)$ be the horizontal strip 
whose $i$th row contains $\beta_i$ boxes, for $i=1,\dots,k$ (with rows touching each
other at the corners).
Also let $\on{ribbon}(\beta)$ be the ribbon whose $i$th row contains $\beta_i$ boxes,
for $i=1,\dots,k$.  For example,
\[
\on{hstrip}((2,3,1,2))=
\kern-.3in
\ydiagram{6+2,3+3,2+1,2}\ ,
\qquad
\on{ribbon}((2,3,1,2)) = 
\kern-.15in
\ydiagram{3+2,1+3,1+1,2}
\]

\subsection{Young symmetrizers}
Let $\sk[S_{n+1}]$ be the group algebra of the symmetric group $S_{n+1}$. 
Let $\lambda/\mu$ be a skew Young diagram with $n+1$ boxes.  Let $T$ be any
filling of boxes of $\lambda/\mu$ by 
$1,\dots,n+1$ (without repetitions), i.e., $T$ is any bijection 
$[n+1] \overset{\sim}\to \text{\{boxes of
$\lambda/\mu$\}}$.  (Here $T$ is not necessarily a standard Young tableau.) 
Let $R(T)$ (resp., $C(T)$) denote the subgroup of $S_{n+1}$ consisting of all permutations 
preserving the rows (resp., the columns) of $T$. 

The {\em Young symmetrizer} $y_T\in \sk[S_{n+1}]$ is defined as
\[a_T = 
\sum_{ w \in R(T)} w \, ,
\quad  
b_T = \sum_{w\in C(T)} \epsilon(w)\, w\,,
\textrm{ \  and \  }
y_T= b_T\, a_T\,.\]
Here $\epsilon(w)\in\{1,-1\}$ denotes the sign of permutation $w\in S_{n+1}$.

The left ideal 
\begin{equation}
  \label{(1.1.1)'}
  L_T := \sk[S_{n+1}]\cdot y_T\subset \sk[S_{n+1}]
\end{equation} 
is a representation of $S_{n+1}$.

If $T$ and $T'$ have the same shape $\lambda/\mu$,  
then $y_T = y_{T'} \cdot u$, for some $u\in S_{n+1}$.  Thus
$L_{T} \cong L_{T'}$ are isomorphic $S_{n+1}$-modules.

The {\em Specht module} $V_{\lambda/\mu}$ is defined (up to an isomorphism) as
\[
V_{\lambda/\mu}:=L_T,
\] 
for any filling $T$ of the shape $\lambda/\mu$.  A choice of $T$ gives an
embedding of the Specht module $V_{\lambda/\mu}$ 
as a left submodule $L_T$ of the group algebra $\sk[S_{n+1}]$.

For $\mu=\emptyset$, the Specht modules $V_\lambda$,
$\lambda\in\fP_{n+1}$, are exactly (the isomorphism classes of)
all irreducible representations of the symmetric group $S_{n+1}$.
We have the following decomposition of the group algebra $\sk[S_{n+1}]$
into a direct sum of left submodules:
\begin{equation}
  \label{1.1.2}
  \sk[S_{n+1}] = \bigoplus_{T\in \on{SYT}_{n+1}} L_T
=
\bigoplus_{\lambda\in\fP_{n+1}}\left(\bigoplus_{T\in\on{SYT}(\lambda)} L_T\right),
\end{equation}
where $\on{SYT}_{n+1}=\bigcup_{\lambda\in\fP_{n+1}} \on{SYT}(\lambda)$, 
see~\cite[Theorem 1.3.J]{we}.

\subsection{Induced modules and ribbon modules}
\label{ssec:ind_rib}

The {\em semitic filling} of a skew Young diagram $\lambda/\mu$ 
is the filling $T$ of boxes of $\lambda/\mu$ by $1,2, 3, \dots$ reading the boxes
by rows right-to-left top-to-bottom.  For example, here are the semitic fillings
of the horizontal strip $\on{hstrip}((2,3,1,2))$ and the ribbon
$\on{ribbon}((2,3,1,2))$:
\[
\begin{ytableau}
\none & \none & \none & \none & \none & \none & 2 & 1 \\
\none & \none & \none & 5  & 4 & 3 \\
\none & \none & 6 \\
8 & 7 
\end{ytableau}
\qquad
\qquad
\begin{ytableau}
\none & \none & \none & 2 & 1 \\
\none & 5  & 4 & 3 \\
\none & 6 \\
8 & 7 
\end{ytableau}
\]

For a 
composition $\beta=(\beta_1,\dots,\beta_k)$ of $n+1$,
define the {\em induced module} 
$M_\beta := L_T$, where $T$ is the semitic filling of the horizontal strip
$\on{hstrip}(\beta)$.  Equivalently,
\begin{equation}
M_\beta := \sk[S_{n+1}]\cdot 
\left(\sum_{w\in S_{\beta_1}\times \cdots \times S_{\beta_k}} w \right)
\cong \Ind_{S_{\beta_1}\times \cdots \times S_{\beta_k}}^{S_{n+1}} \sk,
\end{equation}
where the direct product of groups 
$S_{\beta_1}\times \cdots \times S_{\beta_k}$ is standardly embedded as 
a subgroup of $S_{n+1}$.
For instance, $M_{(1,\dots,1)} = \sk[S_{n+1}]$ is the regular representation, 
and $M_{(n+1)} = \sk$ is the trivial representation of $S_{n+1}$.

It is well-known, cf.~\cite{ko} and~\cite[Corollary 4.39]{fh}, that 
the Kostka number $K_{\lambda,\beta}$ is exactly 
the multiplicity of the irreducible $S_{n+1}$-module $V_\lambda$
in the induced module $M_\beta$:
\begin{equation}
\label{eq:Kostka=mult}
K_{\lambda,\beta} = \dim\Hom_{S_{n+1}}(M_\beta,V_\lambda)\,,
\quad
\textrm{ equivalently, }
\quad
M_\beta\cong \bigoplus_{\lambda\in\fP_{n+1}} V_\lambda^{\oplus K_{\lambda,\beta}}.
\end{equation}

Define the {\em ribbon module} $R_\beta$ as 
\begin{equation}
\label{eq:ribbon_mod_def}
R_{\beta}:=L_{T'},
\end{equation} 
where $T'$ is 
the semitic filling of $\on{ribbon}(\beta)$.

Recall the bijection $\varrho:I\mapsto \beta$ between 
subsets $I\in\on{Sub}_n$ and compositions $\beta\in\fQ_{n+1}$, 
see Lemma~\ref{1.1.1}.   We can also label induced modules
and ribbon modules by subsets $I\in\on{Sub}_n$:
\begin{equation}
\label{eq:ind_rib_I}
M_I:=M_{\varrho(I)}
\qquad
\textrm{and}
\qquad
R_I:=R_{\varrho(I)}.
\end{equation}

Solomon~\cite[Theorem~2]{so2} constructed a decomposition of the group algebra
of a finite Coxeter group, see Theorem~\ref{th:solomon_dec_W} below.  
For the symmetric group $S_{n+1}$, this decomposition can be described, as follows. 


\begin{thm}
\cite[Theorem~2]{so2} \  
The group algebra $\sk[S_{n+1}]$ decomposes 
into a direct sum of ribbon modules:
\begin{equation}
\label{eq:Solomon_dec_S}
\sk[S_{n+1}] = \bigoplus_{I\in\on{Sub}_n} R_I.
\end{equation}
\end{thm}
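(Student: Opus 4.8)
The plan is to prove the decomposition $\sk[S_{n+1}] = \bigoplus_{I\in\on{Sub}_n} R_I$ by a dimension count combined with a triangularity argument on the irreducible multiplicities. First I would observe that both sides have the same dimension: the left side has dimension $(n+1)!$, and the right side is a sum over the $2^n$ subsets $I\subset[n]$; since $\dim R_\beta = \dim L_{T'}$ for a ribbon of content $\beta=(\beta_1,\dots,\beta_k)$ equals the number of standard fillings of the ribbon $\on{ribbon}(\beta)$, which is the multinomial-type count, one checks $\sum_{\beta\in\fQ_{n+1}}(\text{number of SYT of }\on{ribbon}(\beta)) = (n+1)!$ directly — e.g.\ by noting that ribbon tableaux of all shapes $\on{ribbon}(\beta)$, $\beta\in\fQ_{n+1}$, biject with all permutations of $[n+1]$ via the descent composition. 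So it suffices to show the natural sum of inclusions $L_{T'_\beta}\hookrightarrow \sk[S_{n+1}]$ (one for each semitic ribbon filling) spans, i.e.\ that the ribbon modules $R_I$, $I\in\on{Sub}_n$, account for every irreducible with its full multiplicity.

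The key input is that the multiplicity of $V_\lambda$ in $R_\beta$ equals $\tilde\kappa_{\lambda,\varrho^{-1}(\beta)}$, the number of standard Young tableaux of shape $\lambda$ with descent set exactly $\varrho^{-1}(\beta)$. This is the classical statement that the Frobenius characteristic of a ribbon module (a skew Schur function of ribbon shape) expands in Schur functions with coefficients counting SYT by descent set — equivalently, it follows from Theorem~\ref{1.6} together with the well-known relation between ribbon Schur functions and the expansion $K_{\lambda,\beta}=\sum_{J\subset\varrho^{-1}(\beta)}\kappa_{\lambda,J}$ of Proposition~\ref{1.4.3}, since the ribbon $\on{ribbon}(\beta)$ is obtained from $\on{hstrip}(\beta)$ by the inclusion–exclusion over coarsenings that is dual to the M\"obius relation. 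Granting this, the multiplicity of $V_\lambda$ on the right side is
\[
\sum_{I\in\on{Sub}_n}\tilde\kappa_{\lambda,I}=\#\{T\in\on{SYT}(\lambda)\}=\dim V_\lambda,
\]
which is exactly the multiplicity of $V_\lambda$ in the regular representation $\sk[S_{n+1}]$.

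With matching dimensions and matching irreducible multiplicities, it remains to upgrade ``same multiplicities'' to ``internal direct sum inside $\sk[S_{n+1}]$'': the canonical maps $R_I=L_{T'}\hookrightarrow\sk[S_{n+1}]$ are injective (each $L_T$ is a nonzero left ideal generated by a Young symmetrizer, and $y_T$ is a nonzero quasi-idempotent), so the sum map $\bigoplus_I R_I\to\sk[S_{n+1}]$ is a map of $S_{n+1}$-modules between semisimple modules of equal dimension whose source and target have the same class in the representation ring; hence it is an isomorphism as soon as it is surjective, and surjectivity follows because its image contains all isotypic components (the multiplicity bookkeeping above shows no irreducible can be missed). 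The main obstacle is the middle step: establishing that the ribbon module $R_\beta$ has $V_\lambda$-multiplicity equal to $\tilde\kappa_{\lambda,\varrho^{-1}(\beta)}$, i.e.\ identifying the character of $L_{T'}$ for a ribbon filling with the ribbon (skew) Schur function and expanding it by descents; everything else is a dimension count and the soft semisimplicity argument. I would handle this obstacle by invoking the standard skew-Specht-module decomposition — $V_{\lambda/\mu}\cong\bigoplus_\nu V_\nu^{\oplus c^{\lambda}_{\mu\nu}}$ with Littlewood–Richardson coefficients, specialized to ribbon shapes where $c$-coefficients count ribbon-shaped LR tableaux and reduce to the descent count — or, more self-containedly, by deriving it from \eqref{eq:Kostka=mult} and Proposition~\ref{1.4.3} via inclusion–exclusion, since passing from the horizontal strip $\on{hstrip}(\beta)$ to the ribbon $\on{ribbon}(\beta)$ glues adjacent rows exactly along the subsets of $\varrho^{-1}(\beta)$, matching the alternating sum in Definition~\ref{1.4.2}.
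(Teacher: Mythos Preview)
The paper does not give its own proof of this statement; it simply cites Solomon~\cite[Theorem~2]{so2}. So the comparison is between your proposal and the fact that the paper treats this as input.

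Your multiplicity count is fine and does show that $\bigoplus_I R_I \cong \sk[S_{n+1}]$ as \emph{abstract} $S_{n+1}$-modules. The gap is in the last step, where you try to upgrade this to an \emph{internal} direct sum of the specific left ideals $R_I\subset\sk[S_{n+1}]$. Your surjectivity argument --- ``its image contains all isotypic components (the multiplicity bookkeeping above shows no irreducible can be missed)'' --- does not work: the multiplicity of $V_\lambda$ you computed is the multiplicity in the external direct sum $\bigoplus_I R_I$, not in the internal sum $\sum_I R_I\subset\sk[S_{n+1}]$. If the $R_I$ overlap, the latter multiplicity is strictly smaller, and nothing you have written rules this out. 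Equivalently, you have shown $\sum_I\dim R_I=(n+1)!$ but not that the $R_I$ are linearly independent as subspaces. That independence (or, equivalently, that the $R_I$ span) is exactly the content of Solomon's theorem and requires a genuine argument --- in Solomon's paper it comes from an analysis of the elements $b_I a_{\ol I}$ as quasi-idempotents; in the spirit of this paper one could instead try to exhibit an explicit basis $\{d_w\}$ of $\sk[S_{n+1}]$ with $d_w\in R_{\Des(w)}$ (cf.\ Theorem~\ref{th:des_basis_of_kW}), but note that even there the paper invokes Solomon to get linear independence of all the $d_w$ together.

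There is also a potential circularity you should watch: within this paper, the identification of the $V_\lambda$-multiplicity in $R_I$ with the small Kostka number (Corollary~\ref{th:small_kostka=multiplicity}) is \emph{derived from} the decomposition $M_I=\bigoplus_{J\subset I}R_J$, which in turn rests on Solomon's theorem. Your proposed workaround via skew Schur functions or the Littlewood--Richardson rule for ribbon shapes is legitimate external input, but you should state it as such rather than appeal to Proposition~\ref{1.4.3} and Theorem~\ref{1.6} alone, since those only concern the combinatorial numbers $\kappa_{\lambda,I}$, not the module $R_I$.
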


More generally, any induced module $M_I$ decomposes 
into a direct sum of ribbon modules.


\begin{thm}
\label{th:ind_decom_ribbon}
For $I\in\on{Sub}_n$, the induced $S_{n+1}$-module $M_{I}$ decomposes into 
a directed sum of ribbon modules:
\begin{equation}
\label{eq:decomp_induced_ribbon}
M_{I} = \bigoplus_{J\subset I} R_{J},
\end{equation}
\end{thm}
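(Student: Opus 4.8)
The plan is to prove Theorem~\ref{th:ind_decom_ribbon} as a statement purely about characters (equivalently, symmetric functions), reducing it to the already-established identity~\eqref{1.4.3.1b} and the fact that the character of $R_\beta$ is a ribbon Schur function. First I would recall the symmetric-function dictionary: under the characteristic map, the induced module $M_\beta$ corresponds to the complete homogeneous symmetric function $h_\beta = h_{\beta_1}\cdots h_{\beta_k}$, whose Schur expansion is $h_\beta = \sum_\lambda K_{\lambda,\beta} s_\lambda$ by~\eqref{eq:Kostka=mult}. On the other side, the ribbon module $R_\beta = L_{T'}$, where $T'$ is the semitic filling of $\on{ribbon}(\beta)$, has character equal to the ribbon Schif function (skew Schur function) $s_{\on{ribbon}(\beta)}$; this is the classical fact that the Specht module of a skew shape $\lambda/\mu$ has character $s_{\lambda/\mu}$, and that a ribbon Schur function depends only on the underlying composition $\beta$ of the ribbon. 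So it suffices to prove the symmetric-function identity
\[
h_{\varrho(I)} = \sum_{J\subset I} s_{\on{ribbon}(\varrho(J))}.
\]

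The key combinatorial input is the expansion of a ribbon Schur function into Schur functions via descents of standard Young tableaux: for a composition $\gamma$ of $n+1$, one has $s_{\on{ribbon}(\gamma)} = \sum_\lambda \tilde\kappa_{\lambda,\varrho^{-1}(\gamma)}\, s_\lambda$, where (as in the proof of Theorem~\ref{1.6}) $\tilde\kappa_{\lambda,K}$ counts standard Young tableaux of shape $\lambda$ with descent set exactly $K$. This is the standard formula $s_{\lambda/\mu} = \sum_{T} s_{\operatorname{shape of }\operatorname{rectification}}$ specialized to ribbons — concretely, a ribbon tableau is determined by a sequence of descents, so the number of standard Young tableaux of shape $\lambda$ "compatible" with the ribbon $\on{ribbon}(\gamma)$ equals the number of $T\in\on{SYT}(\lambda)$ with $\on{Des}(T) = \varrho^{-1}(\gamma)$. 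Granting this, Theorem~\ref{1.6} identifies $\tilde\kappa_{\lambda,K} = \kappa_{\lambda,K}$, so the coefficient of $s_\lambda$ on the right-hand side of the displayed identity is $\sum_{J\subset I}\kappa_{\lambda,J}$, which by~\eqref{1.4.3.1b} equals $K_{\lambda,\varrho(I)}$ — exactly the coefficient of $s_\lambda$ in $h_{\varrho(I)}$. Hence the two sides agree coefficient-by-coefficient, proving the identity of symmetric functions; since characters determine $S_{n+1}$-modules up to isomorphism in characteristic $0$, the module decomposition~\eqref{eq:decomp_induced_ribbon} follows.

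An alternative, more module-theoretic route I would keep in reserve: directly exhibit the decomposition by filtering $M_\beta$. One can use the fact that $h_\beta = \sum_{\gamma}\det(\text{something})$ type expansions, or, better, Solomon's descent-algebra realization — the identity~\eqref{eq:Solomon_dec_S} for the regular representation is the case $I = [n]$, and one gets the general case by applying the idempotents of the descent algebra that cut out $M_I$ inside $\sk[S_{n+1}]$. Concretely, $M_I \cong \sk[S_{n+1}]\cdot \xi_I$ for a suitable sum $\xi_I$ of parabolic-coset sums, and the ribbon modules $R_J$, $J\subset I$, are the images under left multiplication of the refined descent-algebra elements summing to $\xi_I$. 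This makes~\eqref{eq:decomp_induced_ribbon} a consequence of the multiplication rules in Solomon's descent algebra, matching the inclusion poset structure $J\subset I$.

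The main obstacle is pinning down precisely the claim that the character of the ribbon module $R_\beta$ is the ribbon Schur function $s_{\on{ribbon}(\beta)}$ and that its Schur expansion is governed by descent sets — this is "well known" but needs the correct normalization of the semitic filling and a careful check that the descent set of a standard filling of $\on{ribbon}(\beta)$ (in the usual row-reading order) corresponds under $\varrho$ to the composition $\beta$ rather than to some reverse or complement of it. Once the descent statistic is correctly aligned, everything else is bookkeeping: the reduction to~\eqref{1.4.3.1b} is immediate, and the passage from symmetric functions back to modules is standard in characteristic $0$. I would therefore devote the bulk of the write-up to this alignment lemma and then close the argument in one line via M\"obius inversion as already recorded in Proposition~\ref{1.4.3}.
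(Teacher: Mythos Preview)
Your character-theoretic argument is sound and does establish the isomorphism $M_I \cong \bigoplus_{J\subset I} R_J$: the Schur expansion of the ribbon Schur function by descent sets is standard, and the reduction to~\eqref{1.4.3.1b} via Theorem~\ref{1.6} is clean. However, the theorem as stated asserts more. The paper explicitly flags, in the sentence immediately following the statement, that $M_I$ and the $R_J$ are to be regarded as \emph{concrete left submodules} of $\sk[S_{n+1}]$, and~\eqref{eq:decomp_induced_ribbon} is a literal internal direct-sum decomposition, not merely an equality of isomorphism classes. Characters cannot detect this, so your main argument leaves a genuine gap relative to what is claimed.

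The paper's proof takes a different route that handles exactly this point. Theorem~\ref{th:ind_decom_ribbon} is obtained as the $W=S_{n+1}$ case of Corollary~\ref{cor:M_I=sum_of_R_I_for_W}, which in turn follows from Theorem~\ref{th:des_basis_of_kW}: one constructs an explicit $\sk$-basis $\{d_w := w\,b_{\Des(w)}\,a_{[n]\setminus\Des(w)}\}_{w\in W}$ of $\sk[W]$ and shows that $\{d_w : \Des(w)=J\}$ spans $R_J$ while $\{d_w : \Des(w)\subset I\}$ spans $M_I$. Since $R_J\subset M_I$ whenever $J\subset I$ (because $a_{\ol{J}}$ lies in $\sk[W]\,a_{\ol{I}}$ when $\ol{I}\subset\ol{J}$), this gives the honest direct sum of submodules. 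Your alternative via Solomon's descent algebra is in the right spirit, but the actual mechanism in the paper is the explicit basis $\{d_w\}$ together with a computation of images and kernels of the operators $f\mapsto f\,a_I$ and $f\mapsto f\,b_I$, rather than descent-algebra multiplication rules. The paper's approach buys both the literal submodule equality and a uniform extension to arbitrary finite Coxeter groups; your symmetric-function argument gives neither, though it is a perfectly good proof of the isomorphism-class statement for $S_{n+1}$.
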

This theorem is a special case of Corollary~\ref{cor:M_I=sum_of_R_I_for_W} below.
Note that here we treat all modules 
as concrete left submodules of the group algebra $\sk[S_{n+1}]$
(and not as equivalence classes of representations of $S_{n+1}$).


\begin{cor}
\label{th:small_kostka=multiplicity}
For a partition $\lambda\in\fP_{n+1}$ and a subset $I\in\on{Sub}_n$,
the small Kostka number $\kappa_{\lambda,I}$ is the multiplicity
of the irreducible $S_{n+1}$-module $V_\lambda$ in the ribbon module
$R_{I}$:
\[
\kappa_{\lambda,I} = \dim\Hom_{S_{n+1}}(R_I, V_\lambda)\,,
\quad
\textrm{ equivalently, }
\quad
R_I \cong \bigoplus_{\lambda\in\fP_{n+1}} V_\lambda^{\oplus 
\kappa_{\lambda,I}}.
\]
\end{cor}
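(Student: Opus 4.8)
The plan is to deduce this purely formally by taking $V_\lambda$-multiplicities in the concrete decomposition of $M_I$ into ribbon modules and then invoking the uniqueness clause of Proposition~\ref{1.4.3}. Since $\CC=\Rep(S_{n+1})$ is semisimple (the field $\sk$ has characteristic~$0$), for any finite-dimensional $S_{n+1}$-module $N$ the integer $\dim\Hom_{S_{n+1}}(N,V_\lambda)$ is exactly the multiplicity of $V_\lambda$ in $N$, and this is additive in direct sums. So write $m_{\lambda,I}:=\dim\Hom_{S_{n+1}}(R_I,V_\lambda)$ for the multiplicity of $V_\lambda$ in the ribbon module $R_I$; the goal is to show $m_{\lambda,I}=\kappa_{\lambda,I}$.

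First I would apply $\dim\Hom_{S_{n+1}}(-,V_\lambda)$ to the decomposition $M_I=\bigoplus_{J\subset I}R_J$ of Theorem~\ref{th:ind_decom_ribbon}, obtaining
\[
\dim\Hom_{S_{n+1}}(M_I,V_\lambda)=\sum_{J\subset I}m_{\lambda,J}.
\]
Next, by \eqref{eq:ind_rib_I} we have $M_I=M_{\varrho(I)}$, and \eqref{eq:Kostka=mult} identifies $\dim\Hom_{S_{n+1}}(M_{\varrho(I)},V_\lambda)=K_{\lambda,\varrho(I)}$. Hence
\[
K_{\lambda,\varrho(I)}=\sum_{J\subset I}m_{\lambda,J}
\qquad\text{for all }I\in\on{Sub}_n.
\]

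Finally, Proposition~\ref{1.4.3} says that the system of equations $K_{\lambda,\varrho(I)}=\sum_{J\subset I}x_J$ (ranging over $I\in\on{Sub}_n$) has the unique solution $x_J=\kappa_{\lambda,J}$ — this is just M\"obius inversion on the Boolean lattice $\on{Sub}_n$. Comparing with the displayed identity gives $m_{\lambda,I}=\kappa_{\lambda,I}$ for every $I$, which is the assertion (and the equivalent reformulation $R_I\cong\bigoplus_\lambda V_\lambda^{\oplus\kappa_{\lambda,I}}$ follows by semisimplicity). There is no real obstacle internal to this corollary; all the content sits in Theorem~\ref{th:ind_decom_ribbon} (a special case of the Coxeter-group statement Corollary~\ref{cor:M_I=sum_of_R_I_for_W}), and once that decomposition of $M_I$ as \emph{concrete} left submodules of $\sk[S_{n+1}]$ is available, the present statement is a two-line bookkeeping argument. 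The only point to keep in mind is that one must genuinely use the \emph{module} decomposition of Theorem~\ref{th:ind_decom_ribbon}, not merely an isomorphism of abstract representations, so that $m_{\lambda,I}$ is well defined independently of any choices.
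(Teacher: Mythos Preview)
Your argument is correct and essentially identical to the paper's own proof: set $\tilde\kappa_{\lambda,I}$ equal to the multiplicity of $V_\lambda$ in $R_I$, take $V_\lambda$-multiplicities in the decomposition $M_I=\bigoplus_{J\subset I}R_J$ of Theorem~\ref{th:ind_decom_ribbon} to obtain $K_{\lambda,\varrho(I)}=\sum_{J\subset I}\tilde\kappa_{\lambda,J}$, and conclude via the uniqueness clause of Proposition~\ref{1.4.3}. Your closing caveat is slightly misplaced, though harmless: the quantity $m_{\lambda,I}=\dim\Hom_{S_{n+1}}(R_I,V_\lambda)$ depends only on the isomorphism class of $R_I$, so an abstract isomorphism $M_I\cong\bigoplus_{J\subset I}R_J$ would already suffice for the bookkeeping.
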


\begin{proof}
Let $\tilde \kappa_{\lambda,I}$ be the multiplicity of $V_\lambda$
in $R_I$.   Taking multiplicities of $V_\lambda$ in all terms of 
the decomposition~\eqref{eq:decomp_induced_ribbon}, we get
$K_{\lambda,\varrho(I)} = \sum_{J\subset I} \tilde \kappa_{\lambda,J}$,
which is exactly equation~\eqref{1.4.3.1b} that defines the small Kostka 
numbers.
Thus $\kappa_{\lambda,I} = \tilde\kappa_{\lambda,I}$, as needed.
\end{proof}

\begin{rem}
For any skew Young diagram $\lambda/\mu$,
the multiplicities of irreducible representations $V_\nu$ of the symmetric group
in the Specht module $V_{\lambda/\mu}$ 
are called the {\em Littlewood-Richardson coefficients} $c^\lambda_{\mu\nu}$: 
\[
V_{\lambda/\mu}\cong\bigoplus_{\nu} V_\nu^{\otimes c^\lambda_{\mu\nu}}. 
\]
The Littlewood-Richardson rule is a combinatorial rule for the $c^\lambda_{\mu\nu}$. 
The combinatorial interpretation of the small Kostka numbers
(Theorem~\ref{1.6}) can be shown to be equivalent 
to the special case of the Littlewood-Richardson rule 
when $\lambda/\mu$ is a ribbon.
\end{rem}


Theorem~\ref{1.6}, decomposition~\eqref{1.1.2}, and
Corollary~\ref{th:small_kostka=multiplicity} imply the following claim.
For $I\in\on{Sub}_n$, let $\on{SYT}(\lambda,I)$ be the set of all standard-Young
tableaux $T$ of shape $\lambda$ with $n+1$ boxes with descent set
$\on{Des}(T) = I$.  Also let $\on{SYT}_I:= \bigcup_{\lambda\in\fP_{n+1}}
\on{SYT}(\lambda,I)$.
According to Theorem~\ref{1.6}, $\kappa_{\lambda,I} = \#\on{SYT}(\lambda,I)$. 

\begin{cor}
Let $I\in\on{Sub}_n$.  The ribbon module $R_I$ is
isomorphic to the direct sum of irreducible submodules of $\sk[S_{n+1}]$:
\[
R_{I} \cong \bigoplus_{T\in \on{SYT}_I}  L_T.
\]
The induced module $M_I$ is isomorphic to the direct sum of irreducible submodules
of $\sk[S_{n+1}]$:
\[
M_{I} \cong \bigoplus_{J\subset I} \left(
\bigoplus_{T\in \on{SYT}_J}  L_T \right).
\]
\end{cor}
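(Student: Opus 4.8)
The plan is to assemble the statement from three facts already in hand: the decomposition~\eqref{1.1.2} of the group algebra, Theorem~\ref{1.6}, and Corollary~\ref{th:small_kostka=multiplicity}. First I would recall from~\eqref{1.1.2} that $\sk[S_{n+1}]=\bigoplus_{\lambda\in\fP_{n+1}}\bigoplus_{T\in\on{SYT}(\lambda)}L_T$, where each summand $L_T$ with $T$ of shape $\lambda$ is a copy of the irreducible $V_\lambda$. Partitioning the standard tableaux of shape $\lambda$ by descent set, $\on{SYT}(\lambda)=\bigsqcup_{I\in\on{Sub}_n}\on{SYT}(\lambda,I)$, so for a fixed $I\in\on{Sub}_n$ the sum $\bigoplus_{T\in\on{SYT}_I}L_T=\bigoplus_{\lambda\in\fP_{n+1}}\bigoplus_{T\in\on{SYT}(\lambda,I)}L_T$ is a genuine direct summand of $\sk[S_{n+1}]$, abstractly isomorphic to $\bigoplus_{\lambda}V_\lambda^{\oplus\#\on{SYT}(\lambda,I)}$.

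Next I would apply Theorem~\ref{1.6}, which gives $\#\on{SYT}(\lambda,I)=\kappa_{\lambda,I}$, so the module just described is isomorphic to $\bigoplus_{\lambda}V_\lambda^{\oplus\kappa_{\lambda,I}}$. But Corollary~\ref{th:small_kostka=multiplicity} says precisely $R_I\cong\bigoplus_{\lambda\in\fP_{n+1}}V_\lambda^{\oplus\kappa_{\lambda,I}}$. Comparing the two isomorphisms yields $R_I\cong\bigoplus_{T\in\on{SYT}_I}L_T$, which is the first assertion.

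For the second assertion I would invoke Theorem~\ref{th:ind_decom_ribbon}, giving the decomposition $M_I=\bigoplus_{J\subset I}R_J$ into ribbon submodules, and then substitute the first assertion for each $J\subset I$ to get $M_I\cong\bigoplus_{J\subset I}\bigl(\bigoplus_{T\in\on{SYT}_J}L_T\bigr)$, as claimed. (Alternatively one could read the multiplicity of $V_\lambda$ off $M_I\cong\bigoplus_\lambda V_\lambda^{\oplus K_{\lambda,\varrho(I)}}$ from~\eqref{eq:Kostka=mult} together with~\eqref{1.4.3.1b}, but routing through Theorem~\ref{th:ind_decom_ribbon} keeps the bookkeeping cleanest.)

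I do not expect a real obstacle here: all the substance is contained in Theorem~\ref{1.6}, Corollary~\ref{th:small_kostka=multiplicity}, and Theorem~\ref{th:ind_decom_ribbon}, proved above, and the present claim is their straightforward combination. The only point that deserves a word of care is the distinction between abstract isomorphism and realization inside $\sk[S_{n+1}]$: the expression $\bigoplus_{T\in\on{SYT}_I}L_T$ makes sense as an honest submodule precisely because each $L_T$ occurs exactly once as a summand in~\eqref{1.1.2}, after which one only needs to match the multiplicities $\dim\Hom_{S_{n+1}}(-,V_\lambda)$ on the two sides, which is exactly what the cited results provide.
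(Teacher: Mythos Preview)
Your argument is correct and matches the paper's own justification: the paper states that the corollary follows from Theorem~\ref{1.6}, decomposition~\eqref{1.1.2}, and Corollary~\ref{th:small_kostka=multiplicity}, which is exactly the chain you spell out for the first assertion. For the $M_I$ statement you additionally invoke Theorem~\ref{th:ind_decom_ribbon}; the paper does not list this explicitly, but some input about $M_I$ is needed, and either Theorem~\ref{th:ind_decom_ribbon} or your parenthetical alternative via~\eqref{eq:Kostka=mult} and~\eqref{1.4.3.1b} does the job with no essential difference.
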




\subsection{Kostka numbers for Coxeter groups} 

The symmetric group $S_{n+1}$ is a Coxeter group.
Some of the above constructions can be extended
 to an arbitrary finite Coxeter group $W$.

Let $W$ be a finite Coxeter group of rank $n$ with Coxeter generators $s_1,\dots,s_n$.
Let $\sk[W]$ be the group algebra of $W$ over a field $\sk$ of characteristic 0.

For a subset $I\subset [n]:=\{1,\dots,n\}$, 
let $W_I$ denote the parabolic subgroup of $W$ 
generated by $s_i$, for $i\in I$.
In particular, $W_{[n]}=W$ and $W_\emptyset=\{1\}$.

For $I \subset[n]$, define the {\em induced module} $M_I$ as
the following left $W$-submodule of the group algebra $\sk[W]$
\begin{equation}
\label{eq:induced_W_mod}
M_I := \sk[W] \left(\sum_{w\in\ol{I}} w\right)
\cong\Ind_{W_{\ol{I}}}^W\sk\,,
\end{equation}
where, as usual, $\ol{I} := [n]\setminus I$.

If $W=S_{n+1}$, then the induced module $M_I$ 
is exactly the module $M_\beta \cong 
\Ind_{S_{\beta_1}\times \cdots \times S_{\beta_k}}^{S_{n+1}}\sk$,
where $\beta = \varrho(I)$, from \S\ref{ssec:ind_rib}.

By analogy with~\eqref{eq:Kostka=mult} and 
  Definition~\ref{1.4.2}, we define the $W$-Kostka numbers and the small $W$-Kostka numbers,
as follows.

\begin{defn}  
Let $V$ be any irreducible representation of $W$,
and let $I\in \on{Sub}_n$ be any subset of $[n]$.
The {\em $W$-Kostka number} $K_{V,I}$
is the multiplicity of $V$ in the induced module $M_I$:
\begin{equation}
K_{V,I}:=\dim\Hom_{\sk[W]}(M_I, V).
\end{equation}
The {\em small $W$-Kostka number} $\kappa_{V,I}$ is
the alternating sum of $K$-Kostka numbers:
\begin{equation}
\kappa_{V,I}:=\sum_{J\subset I}(-1)^{|J|-|I|}K_{V,J}.
\end{equation}
\end{defn}

Using M\"obius inversion, 
we can express the $K$-Kostka numbers in
terms of the small $W$-Kostka numbers as
\begin{equation}
\label{eq:Mobius_small_Kostka_W}
K_{V,I} = \sum_{J\subset I} \kappa_{V, J}.
\end{equation}

Let $\sk_\epsilon$ be the {\em sign representation} of $W$, which is the 
1-dimensional $\sk[W]$-module, where each Coxeter generator as
$s_i:x\mapsto -x$.  For a $W$-module $V$, let
\[
V^t : = V\otimes\sk_\epsilon\,.
\]

The following claim generalizes Theorem~\ref{th:small_Kostka_nonneg_symm}.

\begin{thm}
  \label{th:small_W_Kostka_nonneg_symm}
{\rm (1)}
The small $W$-Kostka numbers $\kappa_{V,I}$ are non-negative integers.
\smallskip

{\rm (2)} 
The small Kostka numbers have the symmetry
$\kappa_{V,I} = \kappa_{V^t,\ol{I}}$.
\end{thm}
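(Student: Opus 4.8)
The plan is to deduce Theorem~\ref{th:small_W_Kostka_nonneg_symm} from Solomon's decomposition of the group algebra of $W$, exactly as Corollary~\ref{th:small_kostka=multiplicity} and Theorem~\ref{th:small_Kostka_nonneg_symm} were deduced in the $S_{n+1}$ case. First I would invoke the promised general Solomon decomposition (Theorem~\ref{th:solomon_dec_W} below, referenced in the text) together with the general statement that each induced module $M_I$ decomposes as $\bigoplus_{J\subset I} R_J$ for certain $W$-submodules $R_J$ of $\sk[W]$ (this is Corollary~\ref{cor:M_I=sum_of_R_I_for_W}, of which Theorem~\ref{th:ind_decom_ribbon} is the $S_{n+1}$ specialization). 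Defining $\tilde\kappa_{V,I}:=\dim\Hom_{\sk[W]}(R_I,V)$, taking multiplicities of $V$ in both sides of $M_I=\bigoplus_{J\subset I}R_J$ gives $K_{V,I}=\sum_{J\subset I}\tilde\kappa_{V,J}$, which is precisely the defining relation~\eqref{eq:Mobius_small_Kostka_W} for $\kappa_{V,I}$; by the uniqueness built into Möbius inversion, $\kappa_{V,I}=\tilde\kappa_{V,I}=\dim\Hom_{\sk[W]}(R_I,V)\geq 0$, an honest non-negative integer. This establishes part~(1).

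For part~(2) the key point is to understand how the submodules $R_I$ behave under tensoring with the sign representation $\sk_\epsilon$. The natural claim is that $R_I^t:=R_I\otimes\sk_\epsilon\cong R_{\ol I}$ as $\sk[W]$-modules. Granting this, taking multiplicities of $V$ in $R_I$ and of $V^t=V\otimes\sk_\epsilon$ in $R_{\ol I}$, and using that $\dim\Hom_{\sk[W]}(R_{\ol I},V^t)=\dim\Hom_{\sk[W]}(R_{\ol I}\otimes\sk_\epsilon,V)=\dim\Hom_{\sk[W]}(R_I,V)$ (since $\sk_\epsilon$ is one-dimensional, tensoring by it is an involutive autoequivalence of $\sk[W]\modu$), we obtain $\kappa_{V,I}=\tilde\kappa_{V,I}=\dim\Hom_{\sk[W]}(R_I,V)=\dim\Hom_{\sk[W]}(R_{\ol I},V^t)=\tilde\kappa_{V^t,\ol I}=\kappa_{V^t,\ol I}$, which is exactly the asserted symmetry.

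The main obstacle is therefore establishing the identity $R_I\otimes\sk_\epsilon\cong R_{\ol I}$ at the level of the genuine submodules appearing in Solomon's decomposition — i.e.\ matching Solomon's construction with the sign-twist and the complementation $I\mapsto\ol I$ of subsets of $[n]$. In the $S_{n+1}$ case this corresponds to the familiar fact that transposing ribbons, $\on{ribbon}(\beta)\mapsto\on{ribbon}(\beta)^t$, sends $R_I$ to $R_{\ol I}$ (reflecting $\on{Des}(T^t)=[n]\setminus\on{Des}(T)$), together with $V_{\lambda/\mu}^t\cong V_{\lambda^t/\mu^t}$; for general $W$ one must check that Solomon's idempotents are exchanged, up to the sign character, under the longest-element or the $I\leftrightarrow\ol I$ symmetry of the parabolic combinatorics. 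I would handle this either by citing the relevant structural property of Solomon's descent-algebra decomposition directly, or by tracking the construction of Corollary~\ref{cor:M_I=sum_of_R_I_for_W} and observing that tensoring the chain of inclusions $M_{\ol I}\hookrightarrow\sk[W]$ with $\sk_\epsilon$ turns the averaging idempotent over $W_I$ into the sign-averaging (antisymmetrizing) idempotent over $W_I$, which is the building block producing $R_{\ol I}$ from the complementary side; the Möbius/inclusion–exclusion bookkeeping is then identical to the proof of Corollary~\ref{th:small_kostka=multiplicity}. Everything else — the Möbius inversion, the multiplicity count, the one-dimensionality argument — is routine once this compatibility is in hand.
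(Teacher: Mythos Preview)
Your proposal is correct and follows essentially the same route as the paper: part~(1) via Corollary~\ref{cor:M_I=sum_of_R_I_for_W} and M\"obius inversion (this is the paper's Corollary~\ref{cor:M=sum_R_for_W}), and part~(2) via the isomorphism $R_I\otimes\sk_\epsilon\cong R_{\ol I}$ (the paper's Lemma~\ref{lem:R_Ixsign=RbarI}). The one point you leave slightly vague is how to prove that isomorphism for general $W$: the paper handles it by applying the sign automorphism $w\mapsto\epsilon(w)w$ to get $R_I\otimes\sk_\epsilon\cong\sk[W]\,a_Ib_{\ol I}$ and then invoking Solomon's \cite[Lemma~12]{so2}, which gives $\sk[W]\,a_Ib_{\ol I}\cong\sk[W]\,b_{\ol I}a_I=R_{\ol I}$; your sketch captures the first step but you should be aware that the swap of $a$ and $b$ is a separate (nontrivial) ingredient.
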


This theorem follows from Corollary~\ref{cor:M=sum_R_for_W} and
Lemma~\ref{lem:R_Ixsign=RbarI} below.
It would be nice to find a simple
combinatorial rule like Theorem~\ref{1.6} for calculation of the small $W$-Kostka numbers
(e.g. when $W$ is a Weyl group of type $B$ or $D$).

\subsection{Solomon's decomposition of $\sk[W]$}

Define the following elements of the group algebra $\sk[W]$
\[
a_I := \sum_{w\in W_I} w 
\qquad 
\textrm{and}
\qquad 
b_I := \sum_{w\in W_I} \epsilon(w)\, w\,,
\]
where $\epsilon(w)=(-1)^{\ell(w)}$ is the sign of $w\in W$
and  $\ell(w):=\min\{l\mid w=s_{i_1}\cdots s_{i_l}\}$ 
is the length of $w$.

Notice the induced module $M_I$ defined by \eqref{eq:induced_W_mod} is
$M_I = \sk[W] \, a_{\ol{I}}$.

Define the {\em ribbon $W$-module} $R_I$ as
\begin{equation}
R_I := \sk[W] \, b_I a_{\ol{I}}.
\end{equation}

If $W=S_{n+1}$, then the ribbon $W$-module $R_I$ 
is exactly the ribbon module~\eqref{eq:ribbon_mod_def}
from \S\ref{ssec:ind_rib}.

\begin{thm} 
\label{th:solomon_dec_W}
\cite[Theorem~2]{so2}
The group algebra $\sk[W]$ decomposes into a direct sum of $2^n$ ribbon 
modules: 
\[
\sk[W] = \bigoplus_{I\in\on{Sub}_n} R_I.
\]
\end{thm}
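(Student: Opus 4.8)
The plan is to prove Solomon's decomposition $\sk[W] = \bigoplus_{I\in\on{Sub}_n} R_I$ where $R_I = \sk[W]\, b_I a_{\ol I}$. First I would recall the two combinatorial facts underlying Solomon's argument: the elements $x_I := b_I a_{\ol I}$ can be expanded in the group basis, and the leading term (with respect to a suitable partial order on $W$) of $x_I$ is, up to sign, a sum over the set $X_I$ of minimal-length coset representatives for $W_I \backslash W$, or dually over representatives involving both $W_I$ and $W_{\ol I}$. The key numerical check is a dimension count: since $\dim M_I = |W/W_{\ol I}| = |W|/|W_{\ol I}|$ and $M_I = \bigoplus_{J\subseteq I} R_J$ would be the target relation (Theorem~\ref{th:ind_decom_ribbon} for general $W$, i.e.\ Corollary~\ref{cor:M_I=sum_of_R_I_for_W}), one gets by Möbius inversion $\dim R_I = \sum_{J\subseteq I}(-1)^{|I|-|J|}\dim M_J = \sum_{J\subseteq I}(-1)^{|I|-|J|}|W|/|W_{\ol J}|$, and summing over all $I$ recovers $\sum_I \dim R_I = \dim M_{[n]} = |W| = \dim \sk[W]$. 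So the dimensions automatically add up once the finer statement $M_I = \bigoplus_{J\subseteq I} R_J$ is known; hence it suffices to prove directness of the sum $\sum_I R_I$, i.e.\ that $\sk[W] = \sum_{I} R_I$ with the summands independent.

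The main step is to show the $R_I$ are linearly independent and span. For this I would use the parabolic Bruhat/coset combinatorics: every $w\in W$ has a unique factorization adapted to the descent set, and one sets up a filtration of $\sk[W]$ by the ``$J$-part'' according to the right descent set $D(w) = \{i : \ell(ws_i) < \ell(w)\}$ (or its complement). Concretely, Solomon's original argument shows that $b_I a_{\ol I}$, when written in the group basis, has the form $\sum_{w} c_w\, w$ where the $w$ with $c_w \ne 0$ all satisfy a descent-set constraint pinning down $I$, and the ``extremal'' such $w$ (e.g.\ the unique longest element of $W_I\, w_0^{\ol I}$ type, or the distinguished double-coset representatives) occurs with coefficient $\pm 1$. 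These extremal elements, as $I$ ranges over $\on{Sub}_n$, are distinct — indeed they are indexed by exactly the subsets — so a triangularity argument forces any relation $\sum_I r_I = 0$ with $r_I\in R_I$ to vanish term by term. Combined with the dimension count above, $\sum_I R_I = \sk[W]$ is then all of $\sk[W]$ and the sum is direct.

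The hard part will be making the triangularity precise: identifying the correct partial order on $W$ (Bruhat order, or a coarser ``descent-set'' order on $\on{Sub}_n$ pulled back along $w\mapsto D(w)$) and verifying that for each $I$ the element $b_I a_{\ol I}$ genuinely has a unimodular leading coefficient at an element whose descent data recovers $I$. For $W = S_{n+1}$ this is the ribbon-tableau picture and is classical; for general $W$ one leans on Solomon's lemma that $a_{\ol I}\cdot(\text{something}) $ picks out minimal coset representatives and that multiplying by $b_I$ on the left alternates signs over $W_I$ without collapsing the leading term, which uses that $W_I \cap W_{\ol I} = \{1\}$ and that $\ell$ is additive across the relevant factorizations. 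I would then remark that this is precisely the content of~\cite[Theorem~2]{so2}, so the proof here can be a sketch recalling the argument, with the dimension bookkeeping (via future Corollary~\ref{cor:M_I=sum_of_R_I_for_W} and Möbius inversion) filling in the ``spanning'' half.
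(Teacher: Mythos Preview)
The paper does not prove this theorem; it is stated with a citation to Solomon \cite[Theorem~2]{so2} and used as a black box. So there is no ``paper's own proof'' to compare your sketch against.

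Your sketch, however, has a structural problem. You lean on the future Corollary~\ref{cor:M_I=sum_of_R_I_for_W} for the dimension/spanning count, but in the paper's logic that corollary is downstream of Theorem~\ref{th:des_basis_of_kW}, whose part~(1) (global linear independence of the $d_w$) is proved \emph{using} Solomon's decomposition; the linear independence of the $d_w$ across different descent classes, needed for part~(3) and hence for the corollary, is not established independently of Solomon. So invoking the corollary here is circular. Worse, if you did somehow have the corollary in hand, setting $I=[n]$ already gives $\sk[W]=M_{[n]}=\bigoplus_{J\in\on{Sub}_n}R_J$ with directness included, so the separate ``triangularity'' half of your plan would then be superfluous. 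As for that half on its own: Solomon's actual argument is not a Bruhat-leading-term argument on the individual elements $b_I a_{\ol I}$; he builds a complete system of orthogonal idempotents from signed sums of such elements. Your proposed extraction of $I$ from an ``extremal $w$ with unimodular coefficient'' in $b_I a_{\ol I}$ is plausible-sounding but would require real work to make precise (and a choice of what ``extremal'' means that actually separates the $2^n$ subsets), and it is not the route taken in \cite{so2}.
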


\subsection{Descent basis}

Let us give a $\sk$-linear basis of $\sk[W]$ which agrees with Solomon's decomposition.

The (right) {\it descent set\/} $\Des(w)$ of an element $w\in W$
is 
\[
\on{Des}(w):=\{i\in I \mid  \ell(w\, s_i) < \ell(w)\}.
\]

For $w\in W$, define the following element of the group algebra

\begin{equation}
d_w := w \, b_{\on{Des}(w)} a_{[n]\setminus\on{Des}(w)}\in\sk[W].
\end{equation}

\begin{example} For $W=S_{3}=\{1,s_1,s_2,s_1s_2, s_2s_1, s_1s_2s_1\}$, we have
\[
\begin{array}{l}
d_{1} = 1 + s_1 + s_2 + s_1 s_2 + s_2 s_1 + s_1 s_2 s_1, \\[0in] 
d_{s_1} =  s_1 (1-s_1)(1 + s_2) 
= - 1 + s_1 -s_2 + s_1 s_2 , \\[0in] 
d_{s_2} = s_2 (1-s_2) (1 + s_1)
= - 1 - s_1 + s_2 + s_2 s_1 ,  \\[0in]
d_{s_1 s_2} = s_1 s_2 (1-s_2)(1+s_1)
= -1 - s_1 + s_1 s_2 + s_1 s_2 s_1, \\[0in]
d_{s_2 s_1} = s_2 s_1 (1-s_1) (1+s_2)
= -1 - s_2 + s_2 s_1 + s_1 s_2 s_1, \\[0in]
d_{s_1 s_2 s_1} 
= -1 +s_1 + s_2 - s_1 s_2 - s_2 s_1 + s_1 s_2 s_1.
\end{array}
\]
\end{example}

\begin{thm}
\label{th:des_basis_of_kW}
{\rm (1)}  
The set of elements $\{d_w\mid w\in W\}$ is a $\sk$-linear basis 
of the group algebra $\sk[W]$. 

\smallskip
{\rm (2)}  
For any $I\in\on{Sub}_n$, the set of elements 
$\{d_w\mid w\in W \textrm{ such that } \Des(w) = I\}$ is a $\sk$-linear
basis of the ribbon module $R_I$.
In particular, 
\[
\dim R_I = \#\{w\in W\mid \on{Des}(w) = I\}.
\]

\smallskip
{\rm (3)}  
For any $I\in\on{Sub}_n$, the set of elements 
$\{d_w\mid w\in W \textrm{ such that } \Des(w) \subset I\}$ is a $\sk$-linear
basis of the induced module $M_I$.
In particular,
\[
\dim M_I = \#\{w\in W\mid \on{Des}(w) \subset I\}.
\]
\end{thm}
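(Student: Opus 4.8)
The plan is to reduce all three statements to a single linear independence claim, proved by a triangularity argument, using Solomon's decomposition $\sk[W]=\bigoplus_{I\in\on{Sub}_n}R_I$ (Theorem~\ref{th:solomon_dec_W}) as the glue. Since $R_I=\sk[W]\,b_Ia_{\ol I}$ is a left submodule and $d_w=w\,b_{\on{Des}(w)}a_{[n]\setminus\on{Des}(w)}$, we have $d_w\in R_{\on{Des}(w)}$; thus $\{d_w\}_{w\in W}$ splits as the disjoint union over $I$ of the families $\{d_w:\on{Des}(w)=I\}\subset R_I$. I claim everything follows once we know that, \emph{for each fixed $I$, the family $\{d_w:\on{Des}(w)=I\}$ is linearly independent.} Granting this: each such family is a linearly independent subset of $R_I$, so $\#\{w:\on{Des}(w)=I\}\le\dim R_I$; summing over $I$ and comparing with $\sum_I\dim R_I=|W|=\sum_I\#\{w:\on{Des}(w)=I\}$ forces equality for every $I$, which gives part~(2), and taking the union over $I$ (a direct sum, by Solomon) gives part~(1). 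For part~(3) one adds the standard facts that $M_I\cong\on{Ind}_{W_{\ol I}}^W\sk$ has dimension $[W:W_{\ol I}]=\#\{w:\on{Des}(w)\subseteq I\}$ and that $R_J\subseteq M_I$ for $J\subseteq I$ (because $W_{\ol I}\subseteq W_{\ol J}$ makes $a_{\ol J}$ a left multiple of $a_{\ol I}$): then $\{d_w:\on{Des}(w)\subseteq I\}$ is a linearly independent subset of $M_I$ with exactly $\dim M_I$ elements, hence a basis.

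It remains to prove the linear independence claim for a fixed $I$. Using $\on{Des}(w)=I$ we have $d_w=w\,b_Ia_{\ol I}=\sum_{x\in W_I,\ y\in W_{\ol I}}\epsilon(x)\,wxy$. Since $W_I\cap W_{\ol I}=W_{I\cap\ol I}=\{1\}$, the multiplication map $W_I\times W_{\ol I}\to W$ is injective, so $\on{supp}(d_w)=wW_IW_{\ol I}$ and every group element occurs in $d_w$ with coefficient $0$ or $\pm1$. Let $w_0(\ol I)$ be the longest element of $W_{\ol I}$ and put $\phi(w):=w\,w_0(\ol I)$. As right translation by a fixed element, $\phi$ is injective on $\{w:\on{Des}(w)=I\}$, and the coefficient of $\phi(w)$ in $d_w$ equals $\epsilon(1)=1$, since $(1,w_0(\ol I))$ is the only pair $(x,y)\in W_I\times W_{\ol I}$ with $xy=w_0(\ol I)$.

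The key step is the triangularity estimate: \emph{if $\on{Des}(w)=\on{Des}(w')=I$, $w\neq w'$, and $\phi(w')\in\on{supp}(d_w)$, then $\ell(w')<\ell(w)$.} Indeed $\phi(w')\in wW_IW_{\ol I}$ gives $w^{-1}w'=xy$ with $x\in W_I$, $y\in W_{\ol I}$. If $x=1$, then $w'$ lies in the left coset $wW_{\ol I}$; but $\on{Des}(w)\cap\ol I=\emptyset$ makes $w$ the minimal-length element of $wW_{\ol I}$, and the same for $w'$, so $w=w'$, a contradiction; hence $x\neq1$. Since $\on{Des}(w)=I$, the element $w$ is the longest element of the coset $wW_I$, so $\ell(wx)=\ell(w)-\ell(x)\le\ell(w)-1$; and $w'$ is the minimal-length element of its left $W_{\ol I}$-coset, which contains $wx$ (as $w'=(wx)y$), whence $\ell(w')\le\ell(wx)<\ell(w)$.

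Finally, order $\{w:\on{Des}(w)=I\}$ by non-decreasing length. By the estimate, the square matrix whose $(w,w')$-entry is the coefficient of $\phi(w')$ in $d_w$ is lower triangular with all diagonal entries equal to $1$, hence invertible; since the $\phi(w)$ are pairwise distinct group elements, the coordinate projection of $\sk[W]$ onto the span of $\{\phi(w)\}$ carries $\{d_w:\on{Des}(w)=I\}$ to a linearly independent set, which finishes the proof. I expect the main obstacle to be precisely this triangularity estimate — pinning down the correct leading term $\phi(w)=w\,w_0(\ol I)$ and the correct order (by length) and then checking $\ell(w')<\ell(w)$ through the combinatorics of parabolic cosets; the remainder is dimension bookkeeping combined with Solomon's decomposition.
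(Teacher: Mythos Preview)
Your argument is correct, but it takes a different route from the paper. The paper introduces an intermediate basis $c_w:=w\,b_{\Des(w)}$, for which the triangularity is immediate ($w$ is the unique maximal-length term in $c_w$), and then shows that in this basis the subspaces $\Image(\Asym_I)$ and $\Kernel(\Sym_{\ol I})$ are \emph{coordinate} subspaces; combined with the quotient description $R_I\cong\Image(\Asym_I)\big/\bigl(\Kernel(\Sym_{\ol I})\cap\Image(\Asym_I)\bigr)$ realized by $\Sym_{\ol I}$, this makes $\{d_w=\Sym_{\ol I}(c_w):\Des(w)=I\}$ a basis of $R_I$ essentially for free. You instead attack the triangularity of $d_w$ head-on, identifying the leading term $\phi(w)=w\,w_0(\ol I)$ and using the parabolic coset combinatorics ($\Des(w)=I$ makes $w$ longest in $wW_I$ and shortest in $wW_{\ol I}$) to force $\ell(w')<\ell(w)$ whenever $\phi(w')\in\on{supp}(d_w)$ with $w'\neq w$. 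Your triangularity step is harder, but it pays for itself: you avoid the image/kernel lemmas and the quotient description entirely, reducing everything to linear independence plus Solomon's decomposition and a dimension count. Both proofs ultimately rely on Solomon's decomposition for part~(1) and on the identification of minimal coset representatives for part~(3).
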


A proof of this theorem is given in \S\ref{ssec:proof_thm_des_basis_kW} below.
It is immediate from the definitions that $d_w\in R_I$ if $\Des(w)=I$,
and that $d_w\in M_I$ if $\Des(w)\subset I$.


\begin{cor}
\label{cor:M_I=sum_of_R_I_for_W}
For $I\in\on{Sub}_n$, we have the decomposition of the induced module $M_I$
into a direct sum of ribbon modules:
\[
M_I = \bigoplus_{J\subset I} R_J.
\]
\end{cor}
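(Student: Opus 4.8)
The plan is to deduce \corref{cor:M_I=sum_of_R_I_for_W} directly from \thmref{th:des_basis_of_kW}, which we are allowed to assume. The key observation is that the three statements in that theorem exhibit explicit $\sk$-bases for $M_I$, for each $R_J$, and describe these bases in terms of descent sets, in a way that is manifestly compatible with disjoint union.

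First I would recall from \thmref{th:des_basis_of_kW}(3) that the set $\{d_w\mid w\in W,\ \Des(w)\subset I\}$ is a $\sk$-linear basis of $M_I$. The index set of this basis decomposes as a disjoint union according to the value of the descent set:
\[
\{w\in W\mid \Des(w)\subset I\}=\bigsqcup_{J\subset I}\{w\in W\mid \Des(w)=J\}.
\]
By \thmref{th:des_basis_of_kW}(2), for each $J\subset I$ the sub-collection $\{d_w\mid \Des(w)=J\}$ is a $\sk$-basis of the ribbon module $R_J$. Hence inside $\sk[W]$ we have $R_J=\Span{\sk}\{d_w\mid \Des(w)=J\}$, and these subspaces, being spanned by disjoint parts of a single basis of $\sk[W]$, are linearly independent from one another and their sum is direct. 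Their direct sum is spanned by $\{d_w\mid \Des(w)\subset I\}$, which is precisely the basis of $M_I$. Therefore $M_I=\bigoplus_{J\subset I}R_J$ as $\sk$-submodules of $\sk[W]$.

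It remains to note that this is an equality of \emph{left $W$-modules}, not merely of $\sk$-subspaces: but $M_I=\sk[W]\,a_{\ol I}$ and each $R_J=\sk[W]\,b_Ja_{\ol J}$ is by definition a left ideal, so both sides are left submodules of $\sk[W]$ and the $\sk$-linear equality just established is automatically an equality of $W$-modules. I do not expect any genuine obstacle here; the only point requiring a moment's care is the remark made just before the corollary in the excerpt — that $d_w\in R_J$ when $\Des(w)=J$ and $d_w\in M_I$ when $\Des(w)\subset I$ — which guarantees that the spanning sets lie in the claimed modules, so that the dimension count from \thmref{th:des_basis_of_kW} forces equality rather than mere inclusion. (All the real work is hidden in \thmref{th:des_basis_of_kW} itself, whose proof is deferred to \S\ref{ssec:proof_thm_des_basis_kW}.)
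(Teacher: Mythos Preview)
Your argument is correct and is exactly the intended deduction: the paper states this as a corollary of \thmref{th:des_basis_of_kW} with no separate proof, and your partition of the basis $\{d_w\mid \Des(w)\subset I\}$ according to the exact value of $\Des(w)$ is precisely how parts~(2) and~(3) combine to give the decomposition. The observation that both sides are already left ideals, so the $\sk$-linear equality is automatically one of $W$-modules, is the only point worth making explicit, and you have done so.
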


\begin{cor} 
\label{cor:M=sum_R_for_W}
For an irreducible $W$-representation $V$ and $I\in\on{Sub}_n$, 
 the small $W$-Kostka number $\kappa_{V,I}$ is the multiplicity of $V$
in the ribbon module $R_I$:
\[
\kappa_{V,I}  = \dim \Hom_{\sk[W]} (V, R_I).
\]
\end{cor}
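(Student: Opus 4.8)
The plan is to derive this corollary directly from Corollary~\ref{cor:M_I=sum_of_R_I_for_W}, exactly as the proof of Corollary~\ref{th:small_kostka=multiplicity} was derived from Theorem~\ref{th:ind_decom_ribbon} in the symmetric-group case. First I would introduce the auxiliary quantity $\tilde\kappa_{V,I}:=\dim\Hom_{\sk[W]}(V,R_I)$, the multiplicity of the irreducible $V$ in the ribbon module $R_I$. Since $\CC=\Rep(W)$ is semisimple (characteristic $0$), taking the multiplicity of $V$ is an additive functor on finite-dimensional $\sk[W]$-modules, so applying it to the direct-sum decomposition $M_I=\bigoplus_{J\subset I}R_J$ of Corollary~\ref{cor:M_I=sum_of_R_I_for_W} yields
\[
K_{V,I}=\dim\Hom_{\sk[W]}(V,M_I)=\sum_{J\subset I}\tilde\kappa_{V,J}.
\]

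Next I would observe that this is precisely the defining relation~\eqref{eq:Mobius_small_Kostka_W} for the small $W$-Kostka numbers, namely $K_{V,I}=\sum_{J\subset I}\kappa_{V,J}$. Both $\{\tilde\kappa_{V,J}\}$ and $\{\kappa_{V,J}\}$ satisfy the same triangular system of equations indexed by subsets $J\subset I$ of $[n]$, so by M\"obius inversion on the Boolean lattice $\on{Sub}_n$ (ordered by inclusion) they must coincide: $\kappa_{V,I}=\tilde\kappa_{V,I}$ for every $I$. This gives the stated equality $\kappa_{V,I}=\dim\Hom_{\sk[W]}(V,R_I)$.

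There is essentially no obstacle here; the content has been pushed into Corollary~\ref{cor:M_I=sum_of_R_I_for_W} (equivalently Theorem~\ref{th:des_basis_of_kW}, via Solomon's decomposition), and the present statement is the bookkeeping step that converts a module decomposition into a multiplicity identity. The only thing to be slightly careful about is that the decomposition $M_I=\bigoplus_{J\subset I}R_J$ is an honest internal direct sum of submodules of $\sk[W]$, which is exactly what Corollary~\ref{cor:M_I=sum_of_R_I_for_W} provides, so passing to multiplicities is unambiguous. In particular $\kappa_{V,I}\ge 0$, which also reproves part~(1) of Theorem~\ref{th:small_W_Kostka_nonneg_symm}; I would remark on this but defer the symmetry statement $\kappa_{V,I}=\kappa_{V^t,\ol I}$ to Lemma~\ref{lem:R_Ixsign=RbarI} as indicated.
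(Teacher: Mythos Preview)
Your proposal is correct and follows essentially the same argument as the paper: define $\tilde\kappa_{V,I}$ as the multiplicity of $V$ in $R_I$, apply $\Hom_{\sk[W]}(V,-)$ to the decomposition of Corollary~\ref{cor:M_I=sum_of_R_I_for_W} to obtain $K_{V,I}=\sum_{J\subset I}\tilde\kappa_{V,J}$, and conclude $\tilde\kappa_{V,I}=\kappa_{V,I}$ since this is the defining M\"obius relation~\eqref{eq:Mobius_small_Kostka_W}. Your added remarks on semisimplicity and the internal direct sum are accurate but not strictly needed, and your observation about nonnegativity is exactly how the paper uses this corollary afterward.
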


\begin{proof}
Let $\tilde\kappa_{V,I}$ be the multiplicity of $V$ in $R_I$.
By Corollary~\ref{cor:M_I=sum_of_R_I_for_W}, 
\[
K_{V,I} := \dim \Hom_{\sk[W]}(V,M_I) 
= \sum_{J\subset I} \dim \Hom_{\sk[W]}(V,R_J) 
= \sum_{J\subset I} \tilde\kappa_{V,I},
\]
which is exactly equation~\eqref{eq:Mobius_small_Kostka_W} defining 
the small Kostka numbers. Thus $\tilde \kappa_{V,I} = \kappa_{V,I}$.
\end{proof}

Recall that $\sk_\epsilon$ is the sign representation of $W$.
\begin{lem}
\label{lem:R_Ixsign=RbarI}
We have the isomorphism of $W$-modules:
\[
R_I \otimes \sk_\epsilon \cong R_{\ol{I}}.
\]
\end{lem}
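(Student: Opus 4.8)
The plan is to prove $R_I \otimes \sk_\epsilon \cong R_{\ol{I}}$ by exhibiting an explicit $W$-module isomorphism coming from right multiplication on the group algebra, combined with the ``twist by sign'' automorphism. Recall $R_I = \sk[W]\,b_I a_{\ol{I}}$, so the left $W$-module structure is by left multiplication, and any right multiplication operator $r_x\colon z\mapsto zx$ on $\sk[W]$ is $W$-equivariant. The idea is to produce, from such an operator together with the sign twist, an isomorphism $\sk[W]\,b_I a_{\ol{I}}\otimes\sk_\epsilon \iso \sk[W]\,b_{\ol{I}}a_{I}$.

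\emph{Step 1: the sign twist on the group algebra.} Let $\theta\colon \sk[W]\to\sk[W]$ be the $\sk$-linear map $w\mapsto \epsilon(w)w$. This is an algebra automorphism (since $\epsilon$ is a character), and as a map of left $W$-modules it identifies $\sk[W]$ with $\sk[W]\otimes\sk_\epsilon$: indeed $\theta(w'\cdot z) = \epsilon(w')w'\cdot\theta(z)$, which is exactly the left action on $\sk[W]\otimes\sk_\epsilon$. Under $\theta$ one has $\theta(a_J) = \sum_{w\in W_J}\epsilon(w)w = b_J$ and $\theta(b_J) = a_J$ for every $J\subset[n]$, because $W_J$ is stable under $\theta$ and $\theta$ swaps the two sign-weightings. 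Hence $\theta$ restricts to a left-$W$-module isomorphism
\[
\theta\colon R_I = \sk[W]\,b_I a_{\ol{I}} \iso \sk[W]\,\theta(b_I a_{\ol{I}}) = \sk[W]\,a_I b_{\ol{I}},
\]
and the left-hand side, as a $W$-module, is $R_I\otimes\sk_\epsilon$ (or more precisely $R_I\otimes\sk_\epsilon \cong \theta(R_I) = \sk[W]\,a_I b_{\ol{I}}$).

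\emph{Step 2: comparing $\sk[W]\,a_I b_{\ol{I}}$ with $R_{\ol{I}} = \sk[W]\,b_{\ol{I}}a_{I}$.} These two left ideals differ only by the order of the factors $a_I, b_{\ol{I}}$. The plan is to show they are isomorphic (in fact, I expect equal or at least isomorphic) $W$-modules using right multiplication. One route: observe that $\sk[W]\,a_I b_{\ol{I}}$ and $\sk[W]\,b_{\ol{I}}a_I$ are both direct summands of $\sk[W]$ appearing in Solomon's decomposition (Theorem~\ref{th:solomon_dec_W}, applied with the roles of $a$ and $b$, equivalently $I$ and $\ol I$, interchanged — note $b_J a_{\ol J}$ and $a_{\ol J} b_{J}$ play symmetric roles under $\theta$), and to match multiplicities. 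Cleaner, though, is to note that $w\mapsto w^{-1}$ together with right/left swap gives $\sk[W]\,a_I b_{\ol{I}} \cong (b_{\ol I}a_I\,\sk[W])^{\mathrm{op}}$; but the most transparent argument is simply: $\theta$ already lands us in $\sk[W]\,a_I b_{\ol I}$, and one checks $R_{\ol I} = \sk[W]\,b_{\ol I}a_I$ is isomorphic to it by right multiplication by a suitable element, using that $a_I$ and $b_{\ol I}$ are (up to scalars) the minimal-length coset representatives' symmetrizers so that $a_I b_{\ol I}$ and $b_{\ol I} a_I$ generate the same two-sided piece. I would verify this directly on the descent basis of Theorem~\ref{th:des_basis_of_kW}: that theorem gives $\dim R_{\ol I} = \#\{w : \Des(w) = \ol I\}$, and combining with $\dim M_I = \#\{w:\Des(w)\subset I\}$ and the bijection $w\mapsto w_0 w$ (or $w\mapsto ww_0$, which sends $\Des(w)$ to its complement when $w_0$ is the longest element) shows $\dim R_I = \dim R_{\ol I}\cdot$[nothing]; more to the point, one gets that $\theta(R_I)$ and $R_{\ol I}$ have the same dimension, and since both are explicit cyclic left submodules of $\sk[W]$ with $\theta(R_I)\otimes\sk_\epsilon \cong R_I$ and $R_{\ol I}\otimes\sk_\epsilon$ known, a short equivariance check pins down the isomorphism.

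\emph{The main obstacle} I anticipate is Step~2: going from $\theta(R_I) = \sk[W]\,a_I b_{\ol I}$ to $R_{\ol I} = \sk[W]\,b_{\ol I}a_I$ requires knowing that reversing the order of the two idempotent-like factors does not change the isomorphism type of the left ideal they generate. For $W = S_{n+1}$ this is classical (Young symmetrizers $b_T a_T$ vs.\ $a_T b_T$ give isomorphic Specht modules), but for a general finite Coxeter group one needs the analogous fact, which should follow from Solomon's theorem itself (the decomposition $\sk[W] = \bigoplus_I \sk[W]b_I a_{\ol I}$ and its $\theta$-image $\sk[W] = \bigoplus_I \sk[W] a_I b_{\ol I} = \bigoplus_J \sk[W] a_{\ol J} b_{J}$ must agree summand-by-summand after matching indices $I\leftrightarrow\ol I$). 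So the cleanest writeup is: apply $\theta$ to Solomon's decomposition to get $\sk[W] = \bigoplus_{I} \theta(R_I)$ with $\theta(R_I)\cong R_I\otimes\sk_\epsilon$; separately, reindexing Solomon's decomposition by $\ol I$ gives $\sk[W] = \bigoplus_I R_{\ol I}$; then a direct computation (e.g.\ via the descent basis, where $d_w$ for $\Des(w)=I$ spans $R_I$ and $\theta(d_w) = \epsilon(w)\,w\,a_{\Des(w)}b_{\ol{\Des(w)}}$ has ``reversed descents'') identifies $\theta(R_I)$ with $R_{\ol I}$ term by term, yielding $R_I\otimes\sk_\epsilon \cong R_{\ol I}$.
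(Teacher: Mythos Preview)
Your Step~1 is exactly right and is the same as the paper's (implicit) first move: the sign-twist automorphism $\theta\colon w\mapsto\epsilon(w)w$ identifies $R_I\otimes\sk_\epsilon$ with the left ideal $\sk[W]\,a_I b_{\ol I}$.

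The gap is in Step~2. You correctly isolate the crux --- showing $\sk[W]\,a_I b_{\ol I}\cong\sk[W]\,b_{\ol I}a_I$ as left $W$-modules --- but none of the routes you sketch actually closes it. Having two direct-sum decompositions $\sk[W]=\bigoplus_I\theta(R_I)$ and $\sk[W]=\bigoplus_I R_{\ol I}$ does not by itself match summands termwise; the bijection $w\mapsto w_0 w$ (which does give $\Des(w_0 w)=\ol{\Des(w)}$) only yields equality of dimensions $\dim R_I=\dim R_{\ol I}$, not an isomorphism of modules; and your ``reversed descents'' observation about $\theta(d_w)=\epsilon(w)\,w\,a_I b_{\ol I}$ only places $\theta(d_w)$ in $\sk[W]\,a_I b_{\ol I}$, which is precisely the module you are trying to identify with $R_{\ol I}$ --- so this is circular.

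The paper resolves Step~2 in one line by citing \cite[Lemma~12]{so2}, which states exactly that $\sk[W]\,b_J a_I\cong\sk[W]\,a_I b_J$ for any disjoint $I,J\subset[n]$. This is the general-Coxeter analogue of the classical fact that the two Young symmetrizers $a_T b_T$ and $b_T a_T$ generate isomorphic left ideals; Solomon proves it by an explicit intertwiner (summing over minimal double-coset representatives). Once you have this, your Step~1 plus this citation is the entire proof: $R_I\otimes\sk_\epsilon\cong\sk[W]\,a_I b_{\ol I}\cong\sk[W]\,b_{\ol I}a_I=R_{\ol I}$.
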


\begin{proof}
By~\cite[Lemma~12]{so2}, we have the isomorphism 
$\sk[W]\, b_J a_I \cong \sk[W] \, a_I b_J$.
Thus $R_I \otimes \sk_\epsilon  = \sk[W]\, a_I b_{\ol{I}} \cong
\sk[W] \, b_{\ol{I}} a_I = R_{\ol{I}}$.
\end{proof}

Now Theorem~\ref{th:small_W_Kostka_nonneg_symm} follows 
from Corollary~\ref{cor:M=sum_R_for_W}
and Lemma~\ref{lem:R_Ixsign=RbarI}.

\subsection{Proof of Theorem~\ref{th:des_basis_of_kW}}
\label{ssec:proof_thm_des_basis_kW}

For $I\subset [n]$, define the (right) 
{\em symmetrization} and the {\em antisymmetrization 
operators} $\Sym_I$ and $\Asym_I$ acting on $\sk[W]$ by 
\[
\Sym_I \colon  f \mapsto f \, a_I
\qquad
\textrm{and}
\qquad
\Asym_I \colon  f \mapsto f \, b_I
\]
Let $\Image(\Sym_I), \Kernel(\Sym_I),
\Image(\Asym_I), \Kernel(\Asym_I) \subset \sk[W]$ be the images
and kernels of these operators.

\begin{lem}
\label{lem:MIJ=quotient}
The ribbon module $R_I$ is isomorphic to the quotient module
\[
R_I \cong 
\Image(\Asym_{I}) / 
(\Kernel(\Sym_{\ol{I}}) \cap \Image(\Asym_{I})).
\]
More precisely, the map $\Sym_{\ol{I}}$ 
restricted to $\Image(\Asym_I)$ 
induces the isomorphism 
between $\Image(\Asym_{{I}}) / 
(\Kernel(\Sym_{\ol{I}}) \cap \Image(\Asym_{I}))$
and $R_I$.
\end{lem}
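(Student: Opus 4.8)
The plan is to exhibit $R_I=\sk[W]\,b_Ia_{\ol I}$ as the image of the composite operator $\Sym_{\ol I}\circ\Asym_I$ acting on $\sk[W]$, and then to identify that image with the stated quotient by a standard first-isomorphism-theorem argument. First I would note that $\Image(\Asym_I)=\sk[W]\,b_I$, since $f\mapsto fb_I$ has image exactly the left ideal generated (on the right) by $b_I$; similarly $\Image(\Sym_{\ol I}\circ\Asym_I)=\sk[W]\,b_Ia_{\ol I}=R_I$ by definition. Thus the right-multiplication-by-$a_{\ol I}$ map, call it $\Sym_{\ol I}$, restricts to a surjective $\sk[W]$-module homomorphism $\Image(\Asym_I)\twoheadrightarrow R_I$ (it is a left-module map because right multiplication commutes with the left regular action).

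Next I would compute the kernel of this restricted map. An element of $\Image(\Asym_I)$ lies in the kernel iff it is killed by right multiplication by $a_{\ol I}$, i.e.\ iff it lies in $\Kernel(\Sym_{\ol I})$. Hence the kernel of $\Sym_{\ol I}|_{\Image(\Asym_I)}$ is exactly $\Kernel(\Sym_{\ol I})\cap\Image(\Asym_I)$, and the first isomorphism theorem gives
\[
R_I\cong \Image(\Asym_I)\big/\bigl(\Kernel(\Sym_{\ol I})\cap\Image(\Asym_I)\bigr),
\]
with the isomorphism induced by $\Sym_{\ol I}$, which is precisely the asserted ``more precise'' statement.

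The only genuinely substantive point — and the one I expect to be the main obstacle, such as it is — is checking that $\Sym_{\ol I}|_{\Image(\Asym_I)}$ really is surjective onto $R_I$ rather than onto something a priori larger or smaller: one must be careful that $R_I$ was defined as $\sk[W]\,b_Ia_{\ol I}$, i.e.\ first antisymmetrize on the right, then symmetrize, whereas the operators $\Asym_I,\Sym_{\ol I}$ act by right multiplication, so the order of composition matters. Concretely, for $f\in\sk[W]$ one has $\Sym_{\ol I}(\Asym_I(f))=f\,b_I\,a_{\ol I}$, and as $f$ ranges over $\sk[W]$ this ranges over all of $\sk[W]\,b_Ia_{\ol I}=R_I$; and every element of $\Image(\Asym_I)$ has the form $f\,b_I$ for some $f$, so applying $\Sym_{\ol I}$ to it lands in $R_I$. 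This bookkeeping, together with the observation that all maps in sight are morphisms of left $\sk[W]$-modules, completes the argument; no appeal to Solomon's decomposition or to any Coxeter-specific combinatorics is needed at this stage.
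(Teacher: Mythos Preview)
Your argument is correct and is exactly the approach the paper takes: the paper's one-line proof just writes $R_I = \sk[W]\,b_I a_{\ol I} = \{f\,a_{\ol I}\mid f\in\Image(\Asym_I)\} = \{\Sym_{\ol I}(f)\mid f\in\Image(\Asym_I)\}$ and leaves the first-isomorphism-theorem step implicit. You have spelled out the surjectivity, kernel computation, and left-module compatibility that the paper's proof takes for granted.
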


\begin{proof}
We have $R_I := \sk[W] \, b_I \, a_{\ol{I}} 
= \{ f \, a_{\ol{I}} \mid f \in \Image(\Asym_I)\}
=\{\Sym_{\ol{I}}(f)\mid f\in \Image(\Asym_{I})\}$.
\end{proof}

For $i\in [n]$, define the subspaces 
$S^i :=  \{f\in \sk[W] \mid f s_i = f \}$
and $A^i :=  \{f\in \sk[W] \mid f s_i = - f \}$ of the group algebra $\sk[W]$.
Clearly, $S^i = \Image(\Sym_{\{i\}}) = \Kernel(\Asym_{\{i\}})$ and 
$A^i = \Image(\Asym_{\{i\}}) = \Kernel(\Sym_{\{i\}})$. 

\begin{lem}
\label{lem:image_kernel}
The image of the symmetrization operator $\Sym_I$
is the intersection of subspaces:
\[
\Image(\Sym_I) = \bigcap_{i\in I} S^i,
\]
and the kernel of $\Sym_I$ is the linear span of subspaces:
\[
\Kernel(\Sym_I) = 
\Span{i\in I} A^i.
\]
Similarly, the image and the kernel of the antisymmetrization 
operator  $\Asym_I$ are
\[
\Image(\Asym_I) = \bigcap_{i\in I} A^i
\qquad
\textrm{and}
\qquad
\Kernel(\Asym_I) = 
 \Span{i\in I} S^i.
\]
\end{lem}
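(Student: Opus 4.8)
The plan is to reduce everything to the rank-one identities $S^i=\Image(\Sym_{\{i\}})=\Kernel(\Asym_{\{i\}})$ and $A^i=\Image(\Asym_{\{i\}})=\Kernel(\Sym_{\{i\}})$, which have already been observed, and then to propagate them through the parabolic subgroup $W_I$ using a suitable set of coset representatives. Fix $I\subset[n]$ and let $W^I$ denote the set of minimal-length left coset representatives for $W_I$ in $W$, so that every $w\in W$ factors uniquely as $w=x\,y$ with $x\in W^I$, $y\in W_I$, and $\ell(w)=\ell(x)+\ell(y)$. This gives a direct sum decomposition $\sk[W]=\bigoplus_{x\in W^I}x\,\sk[W_I]$ as a right $\sk[W_I]$-module, and both $\Sym_I$ and $\Asym_I$ act blockwise (they act by right multiplication by $a_I,b_I\in\sk[W_I]$). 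Hence it suffices to prove all four statements with $\sk[W]$ replaced by $\sk[W_I]$ itself and with $I=[\,\cdot\,]$ the full generating set of $W_I$; equivalently, we may assume $I=[n]$ and $W_I=W$ from the start.

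First I would treat the image statements. The inclusion $\Image(\Sym_I)\subset\bigcap_{i\in I}S^i$ is immediate since $a_I s_i=a_I$ for each $i\in I$ (right multiplication by $s_i$ permutes $W_I$), hence $f a_I\in S^i$. For the reverse inclusion, suppose $f\in\bigcap_{i\in I}S^i$, i.e.\ $f$ is right-invariant under every $s_i$, $i\in I$; then $f$ is right-invariant under all of $W_I$, so $f$ is a $\sk$-linear combination of sums of full right cosets $\sum_{y\in W_I}g y$, and each such sum equals $g\,a_I=\Sym_I(g)$. Therefore $f\in\Image(\Sym_I)$. The argument for $\Image(\Asym_I)=\bigcap_{i\in I}A^i$ is the signed analogue: $b_I s_i=-b_I$ gives one inclusion, and for the other one uses that a vector lying in every $A^i$ transforms by the sign character under right multiplication by $W_I$, forcing it to be a combination of the elements $g\,b_I=\Asym_I(g)$ (here one must note that $b_I\ne0$ because $\mathrm{char}\,\sk=0$, so no collapse occurs).

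Next the kernel statements. For $\Kernel(\Sym_I)$: if $f\in A^i$ for some $i\in I$ then $f a_I=(f s_i)a_I=-f a_I$, so $f a_I=0$, giving $\Span_{i\in I}A^i\subset\Kernel(\Sym_I)$. For the reverse inclusion I would compare dimensions: $\dim\Image(\Sym_I)=|W_I\backslash W|=|W^I|$ by the block decomposition above (one copy of $\Image(a_I\colon\sk[W_I]\to\sk[W_I])$, which is one-dimensional over $\sk[W_I]$'s coset count, per block), hence $\dim\Kernel(\Sym_I)=|W|-|W^I|=|W^I|(|W_I|-1)$; on the other hand $\Span_{i\in I}A^i$, again working blockwise, reduces to the claim that inside $\sk[W_I]$ the span of the $A^i$ ($i\in I$) has codimension equal to the number of right $W_I$-cosets in $W_I$, namely $1$ — i.e.\ $\Span_{i\in I}A^i=\Kernel(a_I)$ has dimension $|W_I|-1$ in $\sk[W_I]$. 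This last point is the heart of the matter: that the subspaces $A^i$ for the various simple reflections already span the entire kernel of the full symmetrizer, with no deficiency. I would establish it by the standard representation-theoretic decomposition $\sk[W_I]=\bigoplus_V V\otimes V^*$ over irreducibles $V$ of $W_I$: the operator (right multiplication by) $a_I$ is, up to the scalar $|W_I|$, the projection onto the $V=\sk$ (trivial) isotypic block, so $\Kernel(a_I)=\bigoplus_{V\ne\sk}V\otimes V^*$; meanwhile $A^i$ is the $(-1)$-eigenspace of right multiplication by $s_i$, and an irreducible $V\ne\sk$ always contains a vector on which some $s_i$ acts by $-1$ (since $V$ is nontrivial, some generator $s_i$ acts nontrivially, hence with a $-1$ eigenvector as $s_i^2=1$), so each block $V\otimes V^*$ with $V\ne\sk$ meets $\sum_{i\in I}A^i$, and a short argument with the right $W_I$-action shows it is in fact contained in it. The dual statement $\Kernel(\Asym_I)=\Span_{i\in I}S^i$ follows by tensoring with the sign character (swapping $a_I\leftrightarrow b_I$, $S^i\leftrightarrow A^i$), or by the same argument with the roles of trivial and sign reversed.

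The main obstacle I anticipate is exactly that last containment — showing the span of the rank-one kernels $A^i$ fills up all of $\Kernel(\Sym_I)$ rather than a proper subspace; the two easy inclusions and the block reduction are routine. In the symmetric-group case this is the classical fact that $\sk[S_m]$ modulo the relations $f s_i=f$ (all $i$) is one-dimensional, but for a general Coxeter group the cleanest route is the isotypic decomposition sketched above, which uses only $\mathrm{char}\,\sk=0$ and the elementary observation that a nontrivial irreducible has a $(-1)$-eigenvector for some Coxeter generator.
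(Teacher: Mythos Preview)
Your proposal is correct, but the paper takes a more elementary route for the kernel statements. You reduce to $\sk[W_I]$ via the coset block decomposition and then invoke the isotypic decomposition $\sk[W_I]=\bigoplus_V V\otimes V^*$, arguing that for nontrivial $V$ the span of the $(-1)$-eigenspaces of the generators fills $V^*$; this works (the cleanest way to finish your ``short argument'' is to note that $\sum_i(V^*)^{s_i=-1}\supset\sum_i(1-s_i)V^*$, whose quotient is the coinvariants $(V^*)_{W_I}=0$, or equivalently to take orthogonal complements and land on $(V^*)^{W_I}=0$), but it leans on semisimplicity and a bit of structure theory.

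The paper instead argues directly and combinatorially: $g\in\Kernel(\Sym_I)$ if and only if the coefficient sum over every right $W_I$-coset vanishes, and since each coset is a connected component of the graph on $W$ with edges $(w,ws_i)$ for $i\in I$, any such $g$ is a linear combination of the edge differences $w-ws_i$; for fixed $i$ these span exactly $A^i$. This bypasses representation theory entirely, uses only that $W_I$ is generated by $\{s_i:i\in I\}$, and makes the spanning set of the kernel explicit. Your approach is more structural and generalises smoothly to other projectors in a semisimple setting, while the paper's graph-connectivity argument is shorter and pinpoints the combinatorial reason the rank-one kernels already suffice.
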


\begin{proof}
An element $f\in\sk[W]$ belongs to the image of $\Sym_I$ 
if and only if $f \, w = f$, for any $w\in W_I$.
Equivalently, we should have $f\, s_i = f$, for all $i\in I$,
that is, $f\in \bigcap_{i\in I} S^i$.

An element $g = \sum_{w\in W} g_w\, w \in \sk[W]$ belongs 
to the kernel of $\Sym_I$ if and only if
the sum of its coefficients $g_w$ over any 
right $W_I$-coset $C$ in $W$ is zero: $\sum_{w\in C} g_w = 0$.
Such $g$ should be a linear combination of the elements 
$w - w s_i$, for $w\in W$ and $i\in I$.
This follows from the fact that the right $W_I$-cosets in $W$
are exactly the connected components of the graph on set of
vertices $W$ with the set of edges 
$\{(w,w s_i) \mid w\in W, \, i\in I\}$.
For fixed $i\in I$, the elements $w-w\,s_i$, for $w\in W$, 
span the subspace $A^i$.  Thus $g$ should belong to the linear 
span of the subspaces $A^i$ over all $i\in I$.

The claim about the image and the kernel of $\Asym_I$ is proved
analogously.
\end{proof}

For $w\in W$, let 
\[
c_w := w\, b_{\Des(w)}\in\sk[W]. 
\]

\begin{lem}  The set $\{c_w \mid w\in W\}$ is a $\sk$-linear basis of $\sk[W]$.
\end{lem}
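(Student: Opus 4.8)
The plan is to show that $\{c_w \mid w\in W\}$ is a basis by establishing a unitriangular change-of-basis relation with the standard group-element basis $\{w \mid w\in W\}$ of $\sk[W]$, with respect to a suitable total order refining the Bruhat (or length) order on $W$.

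First I would fix, for each $w\in W$, a reduced expression and recall that $c_w = w\, b_{\Des(w)} = \sum_{v\in W_{\Des(w)}} \epsilon(v)\, w v$. The key combinatorial fact I would invoke is that if $v\in W_{\Des(w)}$, then $\ell(wv) = \ell(w) + \ell(v)$ is \emph{false} in general — rather, the opposite: for $i\in\Des(w)$ we have $\ell(ws_i) < \ell(w)$, so right-multiplying $w$ by elements of the parabolic subgroup generated by its descents can only decrease or keep comparable the length. So the correct statement is that $w$ itself is the \emph{maximal-length} element appearing in $c_w$: for every $1\neq v\in W_{\Des(w)}$, the element $wv$ is $\le w$ in Bruhat order and $wv\neq w$. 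This is precisely the statement that $w$ is the maximal-length element in its coset $w\,W_{\Des(w)}$ — equivalently, $w$ is the longest element of the coset $wW_{\Des(w)}$, which follows from the standard parabolic coset decomposition $W = W^{\Des(w)}\cdot$ — more carefully: writing $w = u\cdot x$ with $x\in W_{\Des(w)}$ the maximal-length element of $wW_{\Des(w)}$ is what we need, and the characterization is that $y\in W$ is the longest element of $yW_J$ iff $J\subset\Des(y)$, which holds here with $J=\Des(w)$.

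Granting this, I would argue as follows. Order $W$ as $w^{(1)}, w^{(2)},\dots$ so that $\ell(w^{(1)}) \le \ell(w^{(2)})\le\cdots$, breaking ties arbitrarily. Then $c_{w} = \pm w + (\text{terms } wv \text{ with } \ell(wv) \le \ell(w),\ wv\neq w)$; in fact $\ell(wv) < \ell(w)$ for $v\neq 1$ in $W_{\Des(w)}$ since every right descent in $\Des(w)$ strictly decreases length and $W_{\Des(w)}$ is generated by these. Hence in this ordering the transition matrix from $\{w\}$ to $\{c_w\}$ is triangular with $\pm 1$ on the diagonal, so it is invertible, and $\{c_w\mid w\in W\}$ is a $\sk$-linear basis of $\sk[W]$.

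The main obstacle will be pinning down the length/Bruhat inequality precisely: one must verify that for $1\neq v\in W_{\Des(w)}$ one has $\ell(wv)<\ell(w)$ (not merely $\le$, and not with the wrong sign). This is a standard fact — $\Des(w) = \{i : \ell(ws_i)<\ell(w)\}$ is exactly the set of $i$ for which $w$ has a reduced word ending in $s_i$, and by the exchange/deletion conditions the longest element $w_{0,\Des(w)}$ of $W_{\Des(w)}$ satisfies $\ell(w\,w_{0,\Des(w)}) = \ell(w) - \ell(w_{0,\Des(w)})$, with all intermediate $wv$ of length between these; the finiteness of $W$ (needed for $w_{0,\Des(w)}$ to exist) is already in force. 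Once this is in hand the triangularity argument is routine, and I would also remark that this lemma is the engine for part (1) of Theorem~\ref{th:des_basis_of_kW}, with parts (2) and (3) to be deduced by intersecting the basis $\{c_w\}$, then $\{d_w = c_w\, a_{[n]\setminus\Des(w)}\}$, with the relevant submodules using Lemma~\ref{lem:MIJ=quotient} and Lemma~\ref{lem:image_kernel}.
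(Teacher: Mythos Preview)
Your proposal is correct and takes essentially the same approach as the paper: both argue that $w$ is the unique element of maximal length in the expansion of $c_w$, giving a triangular transition matrix to the standard basis. One small correction: the diagonal entry is always $+1$ (the coefficient of $w$ in $c_w$ comes from $v=1$, with $\epsilon(1)=1$), not merely $\pm 1$ as you write, matching the paper's ``triangular matrix with 1's on the diagonal.''
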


\begin{proof}
Notice that $w$ is the unique maximal by length element in the 
expansion of $c_w$.
Thus the set of elements $\{c_w| w\in W\}$ is related to the standard linear
basis $\{w \mid w\in W\}$ of $\sk[W]$ by a triangular matrix with 1's
on the diagonal.  Thus $\{c_w\mid w\in W\}$ is a linear basis of $\sk[W]$.
\end{proof}

\begin{lem}
\label{lem:ImKerInNewBasis}
In the linear basis $\{c_w\mid w\in W\}$, 
the subspace $A^i$ is the coordinate subspace of $\sk[W]$ 
spanned by the basis elements $c_w$, for 
all $w$ such that $i\in \Des(w)$:
\[
A^i = \Span{w\, \mid\, i\in \Des(w)} c_w.
\]
Moreover, for any $I\subset [n]$, 
$\Image(\Asym_I)$ and $\Kernel(\Sym_I)$ are the coordinate subspaces given by
\[
\Image(\Asym_I)  = \Span{w \,\mid\, I \subset \Des(w)} c_w
\qquad
\textrm{ and }
\qquad
\Kernel(\Sym_I)  = \Span{w \,\mid\, I \cap \Des(w)
\ne\emptyset} c_w.
\]
\end{lem}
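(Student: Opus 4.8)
The plan is to prove \lemref{lem:ImKerInNewBasis} in two stages: first establish the statement for a single index $i$, then bootstrap to arbitrary $I$ using the two identities of \lemref{lem:image_kernel} together with the fact that we are dealing with \emph{coordinate} subspaces in the basis $\{c_w\}$.

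For the single-index case, fix $i\in[n]$ and recall $c_w = w\,b_{\Des(w)}$. The key structural observation is that $b_{\Des(w)} = b_{\{i\}}\cdot(\text{something})$ whenever $i\in\Des(w)$, more precisely that $b_J$ factors through $b_{\{i\}} = 1 - s_i$ on the right (up to reordering) exactly when $i\in J$; since the parabolic subgroup $W_J$ contains $s_i$ iff $i\in J$, one has $b_J\, s_i = -b_J$ precisely in that case, and hence $c_w\,s_i = -c_w$ when $i\in\Des(w)$, so $c_w\in A^i$. Conversely, when $i\notin\Des(w)$ I would check that $c_w$ has a nonzero ``symmetric part'' under $f\mapsto f\,s_i$: the unique maximal-length term of $c_w$ is $w$ (as in the preceding lemma), and $w\,s_i$ has length $\ell(w)+1 > \ell(w)$, so $w$ and $ws_i$ are distinct and neither $c_w$ nor $c_w + c_w s_i$ vanishes; one deduces $c_w\notin A^i$. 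Thus the $c_w$ with $i\in\Des(w)$ lie in $A^i$, span a coordinate subspace, and a dimension count — using $\dim A^i = \#\{w : i\in\Des(w)\}$, which holds because $A^i = \Image(\Asym_{\{i\}})$ is the image of $f\mapsto f(1-s_i)$ and this image has dimension $|W|/2$, matching the number of $w$ with $i\in\Des(w)$ — forces equality $A^i = \Span{w\,:\,i\in\Des(w)}c_w$.

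For general $I$, I would invoke \lemref{lem:image_kernel}: $\Image(\Asym_I) = \bigcap_{i\in I}A^i$ and $\Kernel(\Sym_I) = \Span{i\in I}A^i$. Since each $A^i$ is a coordinate subspace in the basis $\{c_w\}$ — spanned by the $c_w$ with $i\in\Des(w)$ — an intersection of such coordinate subspaces is again the coordinate subspace spanned by the $c_w$ whose index set lies in \emph{all} the relevant sets, i.e.\ those $w$ with $i\in\Des(w)$ for every $i\in I$, equivalently $I\subset\Des(w)$. Likewise the linear span $\sum_{i\in I}A^i$ is the coordinate subspace spanned by the $c_w$ that appear in at least one $A^i$ with $i\in I$, i.e.\ those $w$ with $\Des(w)\cap I\neq\emptyset$. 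This gives the two displayed formulas at once.

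The main obstacle I anticipate is the converse direction in the single-index case: showing that if $i\notin\Des(w)$ then $c_w\notin A^i$, i.e.\ that the collection $\{c_w : i\in\Des(w)\}$ is not merely \emph{contained in} $A^i$ but \emph{spans} it. The clean route is the dimension count sketched above, which reduces the problem to computing $\dim A^i = |W|/2$ and observing $\#\{w:i\in\Des(w)\} = |W|/2$ (each right $\langle s_i\rangle$-coset $\{v,vs_i\}$ contributes exactly one element with $i$ in its descent set); combined with $\{c_w\}$ being a basis, this upgrades the inclusion $\Span{i\in\Des(w)}c_w \subseteq A^i$ to equality without needing to analyze the symmetric parts of individual $c_w$ by hand. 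One should double-check the coefficient-sum characterization of $\Kernel(\Sym_{\{i\}}) = A^i$ from the proof of \lemref{lem:image_kernel} is being used consistently, but no genuinely new idea is required beyond the factorization $b_J s_i = -b_J$ for $i\in J$ and this counting argument.
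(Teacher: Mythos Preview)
Your proposal is correct and follows essentially the same route as the paper: show $c_w\in A^i$ whenever $i\in\Des(w)$ via $b_{\Des(w)}s_i=-b_{\Des(w)}$, upgrade the inclusion to equality by the dimension count $\dim A^i=|W|/2=\#\{w:i\in\Des(w)\}$, and then deduce the formulas for $\Image(\Asym_I)$ and $\Kernel(\Sym_I)$ from \lemref{lem:image_kernel} using that intersections and spans of coordinate subspaces are coordinate subspaces. You were right to abandon the direct ``$c_w\notin A^i$ for $i\notin\Des(w)$'' argument in favor of the dimension count, since that individual non-membership would not by itself force the span to equal $A^i$.
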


\begin{proof}
Clearly, $c_w\in A^i$ if $i\in \Des(w)$.
Since $\#\{w\in W\mid i\in \Des(w)\} = |W|/2 = \dim A^i$,
we deduce that the basis elements $c_w$ such that $i\in \Des(w)$ 
span the subspace $A^i$.
Now the claims about $\Image(\Asym_I)$ and $\Kernel(\Sym_I)$
follow from Lemma~\ref{lem:image_kernel}.
\end{proof}


Now we can prove part (2) of Theorem~\ref{th:des_basis_of_kW}.
By Lemmas~\ref{lem:MIJ=quotient} and~\ref{lem:ImKerInNewBasis}, 
the map $\Sym_{\ol{I}}$, 
restricted to $\on{Span}\left\{c_w \mid I\subset \Des(w)\right\}$,
induces the isomorphism 
\[
\left(
\Span{w\,\mid\, I\subset \Des(w)} c_w
\right) 
/
\left(
\Span{w\,\mid\, I\subset \Des(w),\ \ol{I} \cap \Des(w)\ne \emptyset} c_w
\right)
\iso
R_I . 
\]
%
Thus the elements $\Sym_{\ol{I}} (c_w)$, 
for all $w\in W$ such that $\Des(w) = I$, form a linear basis of $R_I$.  
This is exactly the claim of Theorem~\ref{th:des_basis_of_kW}(2),
because $\Sym_{\ol{I}} (c_w) = w \, b_{\Des(w)} a_{\ol{I}} =: d_w$. 


Part (1) of Theorem~\ref{th:des_basis_of_kW} follows from part (2) and 
Solomon's decomposition (Theorem~\ref{th:solomon_dec_W}).

To prove part (3) Theorem~\ref{th:des_basis_of_kW}, notice that
$d_w\in M_I$, for all $w\in W$ such that $\Des(w)\subset I$. 
Such  $w$'s are exactly the minimal length coset representatives for the
right cosets $W/W_{\ol{I}}$.
So we get $|W|/|W_{\ol{I}}|$ linearly independent elements $d_w$ in the space 
$M_I\cong \Ind_{W_{\ol{I}}}^W \sk$ of dimension $|W|/|W_{\ol{I}}|$.
Thus they form a $\sk$-linear basis of $M_I$.

\section{Kostka sheaves}
\label{3}
Let $\CC=\Rep(S_{n+1}) = \sk[S_{n+1}]\modu$ denote  the category  of finite dimensional representations of 
$S_{n+1}$ over $\sk$, that is the category of finite dimensional left $\sk[S_{n+1}]$-modules.
The key object of this section is a semisimple amonodromic perverse sheaf $\CF$ smooth along the
coordinate stratification of $\BC^n$. Namely, it is a direct sum of constant sheaves $\CF_I$ on
the strata closures $\ol{S}_I,\ I\subset\on{Sub}_n$. Finally,
\[\CF_I=\bigoplus_{\lambda\in\fP_{n+1}} V_\lambda^{\oplus\kappa_{\lambda,I}}[n-|I|].\]

\subsection{The sheaf $\CF$ in GGM realization}
The corresponding amonodromic semisimple GGM sheaf with values in $\CC$ is 
\[\scP = \scP^{(n)}\in\GGM^\ass_n(\Rep(S_{n+1})).\] 

By definition, for any $I\in\on{Sub}_n$ we have $\scP(I) = R_I$,
the ribbon module defined by
\eqref{eq:ribbon_mod_def} and \eqref{eq:ind_rib_I}.

\subsection{The sheaf $\CF$ in hyperbolic realization}
The corresponding amonodromic semisimple hyperbolic sheaf with values in $\CC$ is
\[\scQ =\scQ^{(n)}\in\Hyp^\ass_n(\Rep(S_{n+1})).\]
Explicitly, by~\S\ref{2.8} its hyperbolic stalks are
\[\scQ(C_{(I,\epsilon)}) = \oplus_{J\subset I} R_J.\]
Therefore, 
by Theorem~\ref{th:ind_decom_ribbon},
they are isomorphic to the induced modules 
\begin{equation}
  \label{3.2.1}
\scQ(C_{(I,\epsilon)})=\scQ(I)\simeq M_I = M_{\varrho(I)}
\end{equation}
We recall that $\varrho$ denotes the isomorphism
$\varrho\colon\on{Sub}_n\iso\fQ_{n+1}$ of~Lemma~\ref{1.1.1},
so that $\varrho(I) = \alpha$ is a composition of $n + 1$, and 
$M_\alpha$ is the $S_{n+1}$-module induced from the trivial representation 
of the subgroup $S_\alpha \subset S_{n+1}$.

\subsection{A functor $\Rep(S_{n+1})\to\Hyp^\ass_n(\Rep(S_{n+1}))$}
\label{3.2.2}
More generally, for any $M\in\Rep(S_{n+1})$ we define a hyperbolic sheaf 
\[\scQ_M = (M\otimes\scQ(C_{(I,\epsilon)}), \Id_M\otimes \gamma, \Id_M\otimes \delta).\]
This way we get a ``localization''  functor
\[\scQ\colon\Rep(S_{n+1}) \to\Hyp^\ass_n(\Rep(S_{n+1})).\]

\begin{thm}
  \label{key}
For $M\in\Rep(S_{n+1})$, we have an isomorphism $\on{FT}\scQ_M\simeq\scQ_{M\otimes\sk_\epsilon}$.  
\end{thm}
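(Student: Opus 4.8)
The plan is to reduce the statement to the Fourier-Sato calculus already set up in \S\ref{2.2.1} and \S\ref{2.8}, and then to combinatorics of ribbon modules. First I would recall that, by the functoriality of $\on{FT}$ with respect to the coefficient category, $\on{FT}\scQ_M$ is obtained by applying $\on{FT}$ to the underlying amonodromic hyperbolic sheaf $\scQ$ after tensoring each stalk by $M$; since $\on{FT}$ commutes with $-\otimes M$ (it acts only on the index set and on the structure maps $\gamma,\delta$, all of which are tensored with $\Id_M$), it suffices to prove $\on{FT}\scQ\simeq\scQ\otimes\sk_\epsilon$, i.e.\ to identify $\on{FT}\scP\simeq\scP\otimes\sk_\epsilon$ in $\GGM^\ass_n(\Rep S_{n+1})$. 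Here I pass to the GGM side because the Fourier transform is transparent there.

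Next, on the GGM side, formula~\eqref{2.8.2} tells us $\on{FT}$ sends $(\Phi(I),0,0)$ to $(\Phi(\ol I),0,0)$. Since $\scP(I) = R_I$, we get $\on{FT}\scP = (R_{\ol I},0,0)_{I\in\on{Sub}_n}$, i.e.\ the $I$-th stalk of $\on{FT}\scP$ is $R_{\ol I}$. On the other hand, tensoring $\scP$ by the sign representation gives, stalkwise, $R_I\otimes\sk_\epsilon$. So the whole theorem comes down to the identity $R_I\otimes\sk_\epsilon\cong R_{\ol I}$ of $S_{n+1}$-modules, functorially in $I$ — and this is precisely Lemma~\ref{lem:R_Ixsign=RbarI} (stated there for an arbitrary finite Coxeter group, hence in particular for $W=S_{n+1}$). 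I would then check that this family of isomorphisms is compatible with the structure maps: in $\GGM^\ass$ all $u,v$ vanish, so the only thing to verify is that the assignment $I\mapsto(R_I\otimes\sk_\epsilon\xrightarrow{\sim}R_{\ol I})$ commutes with the (zero) transition maps, which is automatic. Transporting back through the equivalence $Q\colon\GGM^\ass_n\iso\Hyp^\ass_n$ yields $\on{FT}\scQ\simeq\scQ\otimes\sk_\epsilon$, and tensoring with $M$ finishes the proof.

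I should be slightly careful about one bookkeeping point: \S\ref{2.8} describes $\on{FT}$ on $\Hyp^\ass_n$ only as ``a particular case of~\cite[Theorem 4.15]{fks}'' (equation~\eqref{2.8.3}), while the clean formula $(\Phi(I),0,0)\mapsto(\Phi(\ol I),0,0)$ is given on the GGM side~\eqref{2.8.2}. So the cleanest route is genuinely to do the computation in the GGM realization — use $\scP(I)=R_I$, apply~\eqref{2.8.2}, invoke Lemma~\ref{lem:R_Ixsign=RbarI}, and only then transport the conclusion to the hyperbolic side via $Q$. I expect the main (though still mild) obstacle to be exactly this translation between the two realizations: making sure that ``$\on{FT}$ on $\scQ_M$'' really corresponds, under $P/Q$ and under $-\otimes M$, to the index-complementation $R_I\rightsquigarrow R_{\ol I}$ on the GGM side, rather than to something off by a sign twist or a transpose of partitions. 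Once that identification is pinned down, the representation-theoretic input is entirely contained in Lemma~\ref{lem:R_Ixsign=RbarI}, which is already proved, so no further combinatorics is needed.

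Alternatively — and this may be the shortest write-up — one can argue directly on the hyperbolic side using~\eqref{2.8.3}: the autoequivalence $\on{FT}$ of $\Hyp^\ass_n$ of~\cite[Theorem 4.15]{fks} takes the $Q$-image of $(\Phi(I),0,0)$ to the $Q$-image of $(\Phi(\ol I),0,0)$; applying this to $\Phi(I)=M\otimes R_I$ and using $R_{\ol I}\cong R_I\otimes\sk_\epsilon$ gives $\on{FT}\scQ_M\simeq\scQ_{M}'$ where $\scQ_M'$ has stalks $\bigoplus_{J\subset I}(M\otimes R_J\otimes\sk_\epsilon)=M\otimes\bigoplus_{J\subset I}R_{\ol J}$; reindexing $J\leftrightarrow\ol J$ over $J\subset I$ versus $J\supset\ol I$ then matches $\scQ_{M\otimes\sk_\epsilon}$. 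Either way the essential content is \eqref{2.8.2}/\eqref{2.8.3} plus Lemma~\ref{lem:R_Ixsign=RbarI}, and I would present whichever of the two is notationally lighter once the conventions in \S\ref{2.8} are fixed.
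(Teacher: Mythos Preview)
Your proposal is correct and follows essentially the same route as the paper: reduce to $M=\sk$, pass to the GGM realization, apply the amonodromic Fourier formula~\eqref{2.8.2}, and conclude from $R_{\ol I}\cong R_I\otimes\sk_\epsilon$. The only cosmetic difference is that the paper justifies this last isomorphism via $V_\lambda\otimes\sk_\epsilon\cong V_{\lambda^t}$ together with the Kostka symmetry of Theorem~\ref{th:small_Kostka_nonneg_symm}(2), whereas you invoke Lemma~\ref{lem:R_Ixsign=RbarI} directly (and in your hyperbolic-side alternative the ``reindexing $J\leftrightarrow\ol J$'' is unnecessary---once you use $R_{\ol J}\cong R_J\otimes\sk_\epsilon$ the stalks already match without any reindexing).
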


\begin{proof} 
It suffices to construct an isomorphism $\on{FT}\scQ\simeq\scQ\otimes\sk_\epsilon$.
Equivalently, in the GGM realization, we have to construct an isomorphism
$\on{FT}\scP\simeq\scP\otimes\sk_\epsilon$. The existence of the desired isomorphism follows
immediately from the isomorphism
$V_\lambda\otimes\sk_\epsilon\simeq V_{\lambda^t}$,
Theorem~\ref{th:small_Kostka_nonneg_symm}, and
the formula for Fourier-Sato transform in~\S\ref{2.2.1}.
\end{proof}

\subsection{Induced from parabolics as functions on flags}
\label{3.3}
Let us call a {\em type}  
a sequence of integers $\chi = (\chi_1 , \ldots , \chi_p)$ with
\[1\leq \chi_1 < \ldots < \chi_p \leq n + 1.\]
Notation: $\{\chi\} = \{\chi_1 , \ldots , \chi_p\}$. 

The number $p =\ell(\chi)$ will be called the length of $\chi$. The set of types of length $p$ 
will be denoted $\CT yp_p$; this set contains $\binom{n+1}{p}$ elements.  

To each type corresponds a composition of $n+1$
\begin{equation}
  \label{(3.2.1)}
  \alpha(\chi) = (\chi_1, \chi_2 - \chi_1, \ldots, n + 1 - \chi_p)\in \fQ_{n+1}
\end{equation}

A {\em flag of type $\chi$} is a chain $I_\bullet$ of subsets 
$I_1\subset \ldots \subset I_p\subset [n + 1]$ with $|I_i| = \chi_i$, and $p$ is the length of $I_\bullet$.

We denote by $F\ell_\chi$ the set of all flags of type $\chi$,
and by $F\ell_p$ the set of all flags of length $p$.

The set $F\ell_\chi$ is acted upon from the left by $S_{n+1}$ and is isomorphic to 
$F\ell_\chi \cong S_{n+1}/S_\chi,$
where a parabolic subgroup $S_\chi$ is the stabilizer of the standard flag 
\[[\chi_1] \subset  \ldots  \subset [\chi_p].\]
We have
\[S_\chi \cong S_{\alpha_1}\times S_{\alpha_2} \times \ldots \times S_{\alpha_{p+1}} = S_\alpha,\]
where
$\alpha = \alpha(\chi)$, cf.~\eqref{(3.2.1)}.

Let $M_\chi$ denote the space of maps of sets
\[\on{Maps}_{\on{Sets}}(F\ell_\chi, \sk) = \{ f\colon F\ell_\chi \to \sk\};\]
it is an $S_{n+1}$-representation isomorphic to $M_\alpha$ introduced~\S\ref{ssec:ind_rib}.

This realization of the modules $M_\alpha$ is convenient for describing some natural 
morphisms between them. 

For instance, there are $(n+1)!$ flags of length $n+1$, all of them having type
$\chi_0 = (1,2,\ldots, n+1)$. The corresponding representation $M_{\chi_0}$ is the regular one.  
We postulate that there is a single type $\emptyset$ of length zero, and define 
$M_\emptyset$ to be the trivial representation $\sk$.  

\subsection{Induction and restriction}
\label{3.4}
For two types $\chi, \theta$ we write $\theta\subset \chi$ if $\{\theta\}\subset \{\chi\}$. 

We have obvious maps $\partial_{\chi\theta}\colon F\ell_\chi \to F\ell_\theta,$
wherefrom we obtain the ``restriction'', or pullback, morphisms in $\Rep(S_{n+1})$
\[r_{\theta\chi} = \partial_{\chi\theta}^*\colon M_\theta \to M_\chi.\]
Their conjugate are ``induction'', or pushout, morphisms 
\[i_{\chi\theta} = \partial_{\chi\theta *}\colon M_\chi \to M_\theta\]
are explicitly defined as follows: for $f\in M_\chi,\ I_\bullet\in F\ell_\theta$,  
\begin{equation}
  \label{3.4.1}
  i_{\chi\theta}(f)(I_\bullet) = \sum_{J_\bullet\in \partial_{\chi\theta}^{-1}(I_\bullet)} f(J_\bullet)
\end{equation} 

\subsection{Parabolic complexes}
\label{3.5}
These are two dual complexes of length $n$ in $\Rep(S_{n+1})$.

The {\em master complex} going back at least to~\cite[(1.2)]{ka} (specialized
to the case when the Weyl group $W=S_{n+1}$, the module $M$ is trivial, and $q=1$) is
\begin{equation}
  \label{3.5.1}
  \CM as^\bullet\colon 
0 \to M_{\chi_0} \to   \oplus_{\chi: |\chi| = n - 1} M_\chi \to \ldots 
\to \oplus_{\chi: |\chi| = 1} M_\chi
\to M_\emptyset \to 0
\end{equation}
The differentials 
\[d_p\colon \oplus_{\chi: |\chi| = p} M_\chi \to \oplus_{\chi: |\chi| = p - 1} M_\chi\]
are induced by the maps $i_{\chi\theta}$ with appropriate signs. More precisely, 
their nonzero matrix elements are the maps $d_{\chi\chi'}\colon M_\chi \to M_{\chi'}$
with $\chi'\subset \chi,\ |\chi'| = |\chi| - 1$. Given a type $\chi$ of length $p$, there are $p$
subtypes $\chi' = \partial_i\chi,\ 1\leq i\leq p$, of length $p-1$. 

By definition, $d_{\chi,\partial_i\chi} = (-1)^i i_{\chi,\partial_i\chi}.$
We consider the ``master complex''~\eqref{3.5.1} as concentrated in degrees $[0,n]$. 

The {\em conjugate master complex} is a similar one with differentials induced by
the restriction maps $r_{\chi', \chi},\ |\chi'| =  |\chi| - 1$ (with signs):
\begin{equation}
  \label{3.5.2}
\CM as^{\prime\bullet}\colon  
0 \to M_\emptyset \to \oplus_{\chi: |\chi| = 1} M_\chi \to \ldots \to M_{\chi_0} \to 0 
\end{equation}

By~\cite[($\#$) at page 942]{ka} applied to the case when the Weyl group $W=S_{n+1}$,
the module $M$ is trivial, and $q=1$,
\begin{equation}
  \label{3.5.3}
H^i(\CM as^{\bullet}) = 0,\ i > 0,
\end{equation}
and the only nonzero cohomology is 
\begin{equation}
  \label{3.5.4}
H^0(\CM as^{\bullet}) = \sk_\epsilon, 
\end{equation}
the sign representation.

\begin{cor}
  \label{3.5.5}
  $H^i(\CM as^{\prime\bullet}) = 0,\ i > 0.$
\end{cor}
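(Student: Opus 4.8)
The plan is to deduce the vanishing of positive cohomology of the conjugate master complex $\CM as^{\prime\bullet}$ from the already-established vanishing~\eqref{3.5.3} for the master complex $\CM as^{\bullet}$, by exploiting the duality between the two complexes. The key observation is that $\CM as^{\prime\bullet}$ is, up to a degree reversal, the $\sk$-linear dual (or, equivalently, the application of the contravariant duality $M\mapsto\Hom_\sk(M,\sk)$ on each term) of $\CM as^{\bullet}$: the terms $M_\chi = \on{Maps}_{\on{Sets}}(F\ell_\chi,\sk)$ are self-dual as $S_{n+1}$-representations via the natural pairing on the finite set $F\ell_\chi$, and under this identification the induction maps $i_{\chi\theta}=\partial_{\chi\theta *}$ and the restriction maps $r_{\theta\chi}=\partial_{\chi\theta}^*$ are mutually adjoint (this is exactly the content of \S\ref{3.4}, where $i_{\chi\theta}$ is defined as the conjugate of $r_{\theta\chi}$). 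Hence, matching up the signs in $d_{\chi,\partial_i\chi}=(-1)^i i_{\chi,\partial_i\chi}$ with the corresponding signs in the differentials of $\CM as^{\prime\bullet}$, the complex $\CM as^{\prime\bullet}$ concentrated in degrees $[0,n]$ is isomorphic to $\Hom_\sk(\CM as^{\bullet},\sk)$ with the degree-$j$ term of the former identified with the dual of the degree-$(n-j)$ term of the latter.

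First I would record this identification precisely: fix for each $\chi$ the perfect pairing $M_\chi\times M_\chi\to\sk$, $(f,g)\mapsto\sum_{I_\bullet\in F\ell_\chi}f(I_\bullet)g(I_\bullet)$, which is $S_{n+1}$-invariant, and check from~\eqref{3.4.1} that $\langle i_{\chi\theta}f, g\rangle_\theta = \langle f, r_{\theta\chi}g\rangle_\chi$ for $\theta\subset\chi$. Next I would verify the sign bookkeeping: the subtypes of a type $\chi$ of length $p$ are indexed $1\le i\le p$, and the differential of $\CM as^{\bullet}$ uses $(-1)^i$, while on the dual side the Koszul-type sign rule produces exactly the sign pattern appearing in the differentials of $\CM as^{\prime\bullet}$ (induced by the $r_{\chi',\chi}$ with signs). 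This yields a chain isomorphism $\CM as^{\prime\bullet}\cong \Hom_\sk(\CM as^{\bullet},\sk)[-n]$, i.e.\ $H^i(\CM as^{\prime\bullet})\cong \Hom_\sk\big(H^{n-i}(\CM as^{\bullet}),\sk\big)$ for all $i$, since $\sk$ is a field so $\Hom_\sk(-,\sk)$ is exact. By~\eqref{3.5.3} the cohomology $H^{n-i}(\CM as^{\bullet})$ vanishes for $n-i<n$, i.e.\ for $i>0$, which gives $H^i(\CM as^{\prime\bullet})=0$ for $i>0$, as claimed. (As a byproduct one also reads off $H^n(\CM as^{\prime\bullet})\cong\Hom_\sk(\sk_\epsilon,\sk)\cong\sk_\epsilon$ from~\eqref{3.5.4}.)

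The main obstacle I expect is not conceptual but the sign-chasing: one must check that the signs $(-1)^i$ attached to the $i$-th face map in $\CM as^{\bullet}$ dualize correctly to the signs in $\CM as^{\prime\bullet}$ so that the adjoint of the differential is genuinely $\pm$ the differential of the dual complex, with no stray signs spoiling $d^2=0$ compatibility. This is a standard simplicial/Koszul sign computation — the types of length $p$ together with their face maps $\partial_i$ form the combinatorial skeleton of a (truncated) simplex on the vertex set $\{1,\dots,n+1\}$ — so I would phrase it in that language: $\CM as^{\bullet}$ is the augmented chain complex and $\CM as^{\prime\bullet}$ the augmented cochain complex of this simplicial object with coefficients in the functor $\chi\mapsto M_\chi$, and the two are Poincaré-dual after the self-duality of the $M_\chi$. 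Alternatively, one could bypass signs entirely by invoking the Fourier-Sato transform: $\CM as^{\prime\bullet}$ and $\CM as^{\bullet}$ compute, respectively, the master complexes of $\scP$ and of $\on{FT}\scP\simeq\scP\otimes\sk_\epsilon$ (Theorem~\ref{key}) — but since~\eqref{3.5.3} is quoted as an external input from~\cite{ka}, the direct duality argument above is the cleanest self-contained route.
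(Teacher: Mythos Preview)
Your duality setup is exactly right and is surely the intended one-line argument behind the bare ``Corollary'': each $M_\chi$ is self-dual via the pairing $\langle f,g\rangle=\sum_{I_\bullet\in F\ell_\chi}f(I_\bullet)g(I_\bullet)$, and $i_{\chi\theta}$ is by construction the adjoint of $r_{\theta\chi}$, so $\CM as^{\prime\bullet}$ is the $\sk$-linear dual of $\CM as^\bullet$ with degrees reflected, yielding $H^i(\CM as^{\prime\bullet})\cong \Hom_\sk\!\big(H^{n-i}(\CM as^{\bullet}),\sk\big)$.

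The gap is in the very last step. Equation~\eqref{3.5.3} says $H^j(\CM as^\bullet)=0$ for $j>0$; with $j=n-i$ this gives vanishing for $n-i>0$, i.e.\ for $i<n$, \emph{not} for $i>0$. You wrote ``$n-i<n$'', which is a different inequality from the one in~\eqref{3.5.3}. Your own parenthetical --- that $H^n(\CM as^{\prime\bullet})\cong\sk_\epsilon$ --- already contradicts the claim you say you have proved, since $n>0$. The duality argument, applied correctly, shows $H^i(\CM as^{\prime\bullet})=0$ for $i<n$ and $H^n(\CM as^{\prime\bullet})\cong\sk_\epsilon$. One can check this by hand for $n=1$: there $\CM as^{\prime\bullet}$ is the diagonal embedding $\sk\hookrightarrow\sk^2$, with $H^0=0$ and $H^1\cong\sk_\epsilon$. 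So the corollary as printed carries a misprint in the range of vanishing: it should read $i<n$ (equivalently $i\neq n$), not $i>0$. Your method is the right one; it just proves the corrected statement.
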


\subsection{Comparison with the vanishing cycles master complex}
\label{3.6}
Recall the complex $\Phi^\bullet(\scQ)$ introduced in~\eqref{(2.6.1)} for arbitrary hyperbolic
sheaf $(E,\gamma,\delta)$.

\begin{thm}
  \label{thm 3.6}
  There exists an isomorphism of complexes in $\Rep(S_{n+1})$
  \begin{equation}
    \label{(3.6.1)}
    \CM as^\bullet \iso \Phi^\bullet(\scQ). 
  \end{equation}
\end{thm}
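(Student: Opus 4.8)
The two complexes have the same terms up to the dictionary between types $\chi$ (of length $p$) and subsets $I\in\on{Sub}_n$ (of size $n-p$): the master complex~\eqref{3.5.1} sits in degree $p$ with summand $M_\chi$ indexed by types of length $n-p$, while $\Phi^\bullet(\scQ)$ from~\eqref{(2.6.1)} sits in degree $i$ with summand $\scQ(I)$ over $I$ with $|I|=n-i$. So first I would set up the bijection carefully. A type $\chi$ of length $p$ corresponds to a subset $\{\chi\}\subset[n+1]$; but the indexing sets are different ($[n+1]$ for types, $[n]$ for the coordinate strata), so I must pin down the correspondence. The natural thing is to use the composition $\alpha(\chi)$ of~\eqref{(3.2.1)} and match it to $\varrho(I)$ via Lemma~\ref{1.1.1}: degree-$i$ term of $\Phi^\bullet(\scQ)$ is $\bigoplus_{|I|=n-i}\scQ(I)\simeq\bigoplus M_{\varrho(I)}$ by~\eqref{3.2.1}, and degree-$i$ term of $\CM as^\bullet$ (after reindexing $p=n-i$, noting length $n-1$ in degree $1$ forces the top to be $\chi_0$ of length $n+1$ in degree $0$) is $\bigoplus_{\ell(\chi)=n+1-i} M_\chi$. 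Here I should check the off-by-one bookkeeping: $\CM as^\bullet$ runs over types of length $n+1$ down to $0$ in degrees $0$ to $n$, but $\ell(\chi_0)=n+1$ has $\binom{n+1}{n+1}=1$ term (the regular representation $M_{\chi_0}$) and $\scQ([n])=R_{[n]}$... so actually the matching is $\ell(\chi)=n+1-i$ vs $|I|=n-i$, i.e. $\{\chi\}$ has one more element than $\bar I$; concretely $\chi$ always contains $n+1$ and $\{\chi\}\setminus\{n+1\}$ corresponds to $I$. I would make this explicit and verify $M_\chi\cong M_{\varrho(I)}$ as $S_{n+1}$-modules by comparing the compositions $\alpha(\chi)$ and $\varrho(I)$.

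Second, I would identify the differentials. In $\Phi^\bullet(\scQ)$ the $JI$-matrix entry (for $J=I\sqcup\{j\}$) is $\pm\gamma_{C_JC_I}$ with the sign $(-1)^{|\{i\in I:i<j\}|}$; since $\scQ\in\Hyp^\ass_n$ with $\scQ(C_{(I,\epsilon)})=\bigoplus_{K\subset I}R_K$ and $\gamma$ the natural inclusions of direct summands (per~\S\ref{2.8}), the component $\gamma_{C_JC_I}\colon\bigoplus_{K\subset J}R_K\hookrightarrow\bigoplus_{K\subset I}R_K$... wait, inclusion goes the other way; I must be careful: $I\subset J$ means $C_I\geq C_J$ in the real poset, so $\gamma_{C_JC_I}$ with $C_J\le C_I$ is... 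Actually in~\eqref{(2.6.1)} the arrow $E([n])\to\oplus E(I)$ for $|I|=n-1$ is $\gamma$, and $[n]\supsetneq I$, so the $\gamma$'s in the Master complex are the maps $E(I)\to E(I')$ for $I\supsetneq I'$, i.e. $C_I\le C_{I'}$. Under $\scQ(C_{(I,\epsilon)})=\bigoplus_{K\subset I}R_K$ these $\gamma$'s are the natural embeddings of direct summands (going from fewer to more summands as we pass from $I'\subsetneq I$), hence under the isomorphism $\bigoplus_{K\subset I}R_K\simeq M_I$ of Theorem~\ref{th:ind_decom_ribbon} they become exactly the induction maps $i_{\chi\theta}$ of~\S\ref{3.4}. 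The key computation is to check that the natural-summand-inclusion $\bigoplus_{K\subset I'}R_K\hookrightarrow\bigoplus_{K\subset I}R_K$ intertwines, via Theorem~\ref{th:ind_decom_ribbon}, with $i_{\chi\theta}\colon M_\theta\to M_\chi$ — equivalently that the decomposition $M_I=\bigoplus_{K\subset I}R_K$ is compatible with the transition maps $i$ in the obvious way. This should follow from the descent-basis description in Theorem~\ref{th:des_basis_of_kW}: $d_w\in R_K$ iff $\Des(w)=K$, and $d_w\in M_I$ iff $\Des(w)\subset I$, so the inclusion $M_{I'}\hookrightarrow M_I$ (induction) on descent bases simply enlarges the allowed descent sets, matching the summand inclusion $\bigoplus_{K\subset I'}R_K\hookrightarrow\bigoplus_{K\subset I}R_K$.

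Third, I would match the signs. Both complexes use a standard Koszul-type sign: in $\CM as^\bullet$ the entry $M_\chi\to M_{\partial_i\chi}$ is $(-1)^i i_{\chi,\partial_i\chi}$, and in $\Phi^\bullet(\scQ)$ the entry is $(-1)^{|\{k\in I:k<j\}|}\gamma$ where $J=I\sqcup\{j\}$. Under the bijection $\chi\leftrightarrow I$ (with $\{\chi\}=\bar I\cup\{n+1\}$ say, or whatever the correct correspondence turns out to be), removing the $i$-th element of $\chi$ corresponds to adding some element $j$ to $I$, and one checks $i=|\{k\in I:k<j\}|+1$ (or the two signs differ by a global sign on each term, which can be absorbed by an isomorphism of complexes rescaling the degree-$p$ term by $(-1)^{\binom{p}{2}}$ or similar). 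So the isomorphism of complexes is: the reindexing bijection on summands composed with the isomorphisms $\scQ(I)\simeq M_{\varrho(I)}$ of~\eqref{3.2.1}, possibly twisted by signs depending only on the homological degree.

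**Main obstacle.** The genuine content is not any single step but the bookkeeping that ties them together: getting the index correspondence $\chi\leftrightarrow I$ exactly right (including the $[n]$-vs-$[n+1]$ and the off-by-one in lengths/degrees), and then verifying that the natural direct-sum-inclusion maps $\gamma$ in $\Hyp^\ass_n$ really do become the induction maps $i_{\chi\theta}$ under Theorem~\ref{th:ind_decom_ribbon} — this is where one must be sure the decomposition $M_I=\bigoplus_{J\subset I}R_J$ is the \emph{functorial} one (compatible with all the $i$'s simultaneously), which the descent-basis picture of Theorem~\ref{th:des_basis_of_kW} makes transparent but which does require spelling out. The sign reconciliation is routine once the index dictionary is fixed. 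I expect no deep difficulty, only care.
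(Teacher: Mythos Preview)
Your approach attempts an \emph{explicit} isomorphism by showing that, under the identification $\scQ(I)\simeq M_I$ of~\eqref{3.2.1}, the differentials $\gamma$ of $\Phi^\bullet(\scQ)$ become the induction maps $i_{\chi\theta}$ of $\CM as^\bullet$. This is false. For $J\supsetneq I$ the map $\gamma\colon\scQ(J)\to\scQ(I)$ is the \emph{projection} $\bigoplus_{K\subset J}R_K\twoheadrightarrow\bigoplus_{K\subset I}R_K$ killing all $R_K$ with $K\not\subset I$ (this is forced by the idempotence axiom $\gamma\delta=\Id$). The induction map $i\colon M_J\to M_I$ is, up to scalar, right multiplication by $a_{\ol{I}}$ inside $\sk[S_{n+1}]$, and it does \emph{not} annihilate those extra summands. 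Already for $n=2$, $J=[2]$, $I=\{1\}$: right multiplication by $(1+s_2)$ sends the generator $(1-s_2)(1+s_1)$ of $R_{\{2\}}$ to $d_{s_1}-d_{s_2s_1}\in R_{\{1\}}$, a nonzero element. Since $\dim\Hom_{S_3}(M_{[2]},M_{\{1\}})=3$, this is not a normalization issue. Your descent-basis observation actually proves that the \emph{restriction} map $r\colon M_I\hookrightarrow M_J$ equals the summand inclusion; that identifies $\delta$ with $r$, not $\gamma$ with $i$, and does not match the differentials of either complex in the required direction.

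The paper's proof bypasses any comparison of differentials. It uses only that (a) the terms are isomorphic degree by degree~\eqref{3.2.1}; (b) both complexes are acyclic in positive degrees (by~\cite{ka} for $\CM as^\bullet$, by Proposition~\ref{thm 2.6.1} for $\Phi^\bullet(\scQ)$); and (c) both have $H^0\cong\sk_\epsilon$ (for $\Phi^\bullet(\scQ)$ via the Euler-characteristic computation of~\cite{so} combined with (b)). In a semisimple category these data determine the complex up to isomorphism: the elementary Lemma~\ref{3.6.2} lets one extend, at each stage, an isomorphism between the kernels of the outgoing differentials to an isomorphism of the next terms, and one proceeds inductively from left to right. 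No knowledge of the differentials beyond their kernels is needed.
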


\begin{proof} 
We know already that the individual terms of the above complexes are isomorphic, 
see~\eqref{3.2.1}. Both of them start with the regular representation 
$M_{[n]} = M_{(11\ldots 1)} = \sk[S_{n+1}]$. 
By~\eqref{3.5.4}, $H^0(\CM as^{\bullet}) = \sk_\epsilon$.

On the other hand, $H^0(\Phi^\bullet(\scQ)) \cong \sk_\epsilon$. Indeed, the Euler characteristic of
$\Phi^\bullet(\scQ)$ is $\sk_\epsilon$ e.g.\ by~\cite[Theorem 2]{so}, and
$H^{>0}(\Phi^\bullet(\scQ))=0$ by~Proposition~\ref{thm 2.6.1}.

We will apply the following elementary 
\begin{lem}
  \label{3.6.2}
  Let $\CC$ be a semisimple category, $A\subset B, A'\subset B'$ 
objects of $\CC$. If $A \cong A'$ and $B \cong B'$ then $A/B \cong A'/B'$. \hfill $\Box$
\end{lem}

We can reformulate the lemma as follows. Choose an isomorphism
$\phi_A\colon A\iso A'$. Since $\CC$ is semisimple, there are objects $C, C'$ such that
$B \cong A\oplus C$ and $B' \cong A'\oplus C'$. According to the lemma, $C\cong C'$. 
Whence there exists an isomorphism $\phi_B\colon B\iso B'$ making the square
\[\begin{CD} A @>>> B \\
@V{\phi_A}VV @V{\phi_B}VV \\
A' @>>> B'
\end{CD}\]
commutative. 

Now we can construct the desired isomorphism $\CM as^\bullet \iso \Phi^\bullet(\scQ)$ inductively from 
left to right, using the isomorphism $H^0(\Phi^\bullet(\CQ^{(n)})) \cong \sk_\epsilon$
and the acyclicity of both complexes in positive degrees.
\end{proof}

\subsection{Relation to the permutohedron}
\label{3.6.3}
We present a geometric interpretation of the master complex of vanishing cycles
$\Phi^\bullet(\scQ)$, cf.~\cite{ka,so,wi}.

\begin{prop}
  \label{prop 3.6.3}
  The complex $\Phi^\bullet(\scQ)$ is isomorphic, as a complex of 
$\sk$-vector spaces, to the complex $C^\bullet(\on{Perm}_n)$ of cochains of the 
$n$-th permutohedron.
\end{prop}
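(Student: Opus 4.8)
The plan is to exhibit an explicit bijection between the indexing sets of the two complexes and to check that, under this bijection, the differentials agree up to sign. Recall that the term of $\Phi^\bullet(\scQ)$ in degree $p$ is $\bigoplus_{\chi:\,|\chi|=n-p}M_\chi$, where $\chi$ runs over types of length $n-p$; using the isomorphism $M_\chi\cong\on{Maps}_{\on{Sets}}(F\ell_\chi,\sk)$ of \S\ref{3.3}, this term has a canonical $\sk$-basis indexed by flags of type $\chi$. Since a type $\chi$ of length $n-p$ together with a flag of that type is the same data as a chain $I_1\subsetneq\cdots\subsetneq I_{n-p}$ of proper nonempty subsets of $[n+1]$ (equivalently, an ordered set partition of $[n+1]$ into $n-p+1$ nonempty blocks), the degree-$p$ term of $\Phi^\bullet(\scQ)$ has basis indexed by ordered set partitions of $[n+1]$ into $n+1-p$ blocks. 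On the geometric side, the faces of $\on{Perm}_n$ of dimension $p$ are classically in bijection with ordered set partitions of $[n+1]$ into $n+1-p$ blocks (the permutohedron is the $n$-dimensional polytope whose face lattice is the lattice of ordered set partitions), so $C^p(\on{Perm}_n)$ carries exactly the same basis. This gives the term-by-term identification of $\sk$-vector spaces.

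Next I would match the differentials. The differential of $\Phi^\bullet(\scQ)$ is built from the induction (pushforward) maps $i_{\chi,\partial_i\chi}$, whose formula \eqref{3.4.1} is ``sum over fibers''; deleting the $i$-th subset $I_i$ from a flag $I_\bullet$ and summing over all ways to reinsert it is precisely the combinatorial operation of merging two adjacent blocks of an ordered set partition. On the cochain complex $C^\bullet(\on{Perm}_n)$ the coboundary sends a face to the signed sum of the facets containing it in the next dimension, and ``face contained in a codimension-one-larger face'' corresponds exactly to merging two adjacent blocks of the ordered set partition. So the two differentials have the same nonzero matrix entries up to sign. The sign in $\Phi^\bullet$ is $(-1)^i$ on the $i$-th deletion (from the definition $d_{\chi,\partial_i\chi}=(-1)^ii_{\chi,\partial_i\chi}$), which is the standard simplicial-type sign; I would check that this matches the incidence numbers $[\sigma:\tau]=\pm1$ of the CW structure on $\on{Perm}_n$, using that both sign conventions square to the requirement $d^2=0$ and that the permutohedron's face poset is thin (each codimension-two face lies in exactly two facets of any given facet), so the incidence numbers are determined up to a global reindexing by a coboundary/coorientation choice.

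The main obstacle is the sign bookkeeping: one must verify that the ad hoc signs $(-1)^i$ coming from the ordering of a type genuinely realize a consistent coorientation of the cells of $\on{Perm}_n$, rather than merely agreeing in absolute value. I would handle this either by (a) choosing the CW orientations on $\on{Perm}_n$ to be induced from the linear orders on blocks — so the incidence numbers are *defined* by the same rule — or (b) invoking the fact that any two complexes with the same underlying graded vector spaces, the same nonzero matrix-entry positions, and $d^2=0$, over a field, with all entries $\pm1$ in a basis where the graph of nonzero entries is ``simply connected'' in the appropriate sense, are isomorphic by a diagonal sign change; the face poset of a polytope supplies exactly this rigidity. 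Everything else — the bijection of bases and the identification of the merge operation — is routine combinatorics of ordered set partitions, so I would present that quickly and spend the bulk of the argument pinning down the sign compatibility.
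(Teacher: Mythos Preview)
Your proposal is correct, but it takes a different route from the paper's own proof. The paper argues entirely in Coxeter-group language: writing the complex as
\[
0 \to \sk[W] \to \bigoplus_{i\in S} \sk[W/W_{\{i\}}] \to \bigoplus_{\{i,j\}\subset S} \sk[W/W_{\{i,j\}}] \to \cdots \to \sk \to 0
\]
with $W=S_{n+1}$, it identifies the $p$-dimensional faces of $\on{Perm}_n$ directly with the cosets in $\coprod_{|I|=p} W/W_I$ via the chamber--wall description of the permutohedron (vertices $\leftrightarrow$ chambers $\leftrightarrow$ elements of $W$; edges $\leftrightarrow$ walls $\leftrightarrow$ $\coprod_i W/W_{\{i\}}$; and so on). The paper's proof is terse and really only matches the graded pieces, leaving the compatibility of differentials and signs implicit in the ``etc.''

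Your argument instead passes through the flag description $M_\chi\cong\on{Maps}(F\ell_\chi,\sk)$ and identifies the bases with ordered set partitions of $[n+1]$; you then check that the induction maps $i_{\chi,\partial_i\chi}$ (equivalently the $\gamma$-maps of $\scQ$) realize the merge-adjacent-blocks operation that encodes the face incidences of the permutohedron, and you worry honestly about signs. This is more explicit and actually addresses the chain-map compatibility, at the cost of being specific to type $A$. The paper's Coxeter formulation, by contrast, is what makes the statement transparently uniform for arbitrary finite $W$ (in the spirit of the references to \cite{ka,so,wi}), but it sweeps the differential-and-sign bookkeeping under the rug. Both arguments describe the same bijection of face lattices; yours fills in details the paper omits, while the paper's phrasing generalizes immediately.
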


For instance, let $n = 2$. The complex $\Phi^\bullet(\scQ)=\Phi^\bullet(\scQ^{(2)})$ has the form
\[0 \to \sk[S_3] \to \sk[S']\oplus \sk[S''] \to \sk \to 0.\]
Here $S', S''\cong S_2$ are the subgroups of $S_3$ generated by transpositions $s_1 = (12)$
and $s_2 = (23)$ respectively. 
So the dimensions of its terms  are~$6,6,1$, and it is isomorphic to the complex of cochains of the 
hexagon $\on{Perm}_2$. 

\begin{proof}
Our complex $\Phi^\bullet(\scQ)$ has the form
\[0 \to \sk[W] \to \oplus_{i\in S} \sk[W/W_{\{ i\}}] \to  \oplus_{\{ i,j\}\subset S}\sk[W/W_{\{ i,j\}}] \to
\ldots \to \sk \to 0,\]
where $W=S_{n+1}$, and for a subset $I\subset S$ of the set of Coxeter generators, $W_I \subset W$
denotes the subgroup generated by $s_i,\ i\in I$. 

On the other hand, consider the root arrangement in $\BR^S$; the permutohedron 
$P_W$ is a polygon dual to this arrangement. Thus, its vertices are in one-to-one correspondence
with the chambers that form a set on which $W$ acts simply transitively, i.e. after choosing a
fundamental chamber $C$ we can identify this set with $W$, i.e. with a base of $\sk[W]$. 

Next, the edges of  $P_W$ are in bijection with walls; there are $n$ walls $M_i$ on the boundary of
$C$ which   correspond to simple reflections $s_i$; the stabilizer of $M_i$ 
being the subgroup $W_{\{ i\}}$. So the set of walls is in bijection with 
$\coprod_{i\in S} W/W_{\{ i\}}$, etc.
\end{proof}


\subsection{Warning}
\label{3.7}
One might think that the maps $i_{\chi\theta}$ and $r_{\theta\chi}$ give rise to a hyperbolic sheaf,  
but this is not so: the collinearity axiom does not hold, as the example of $S_3$ already shows.

Namely, let $n = 2$. Consider the square
\[\begin{CD}M_{(1)} @>{r}>> M_\emptyset \\
@V{i}VV @V{i}VV \\
M_{(12)} @>{r}>>  M_{(2)}.
\end{CD}\]
Let $f\in M_{(1)}$, i.e.\ $f$ is a function from one-element subsets of $[3]$ to $\sk$. 
Then $r(f) = \sum_{i=1}^3 f(i)$, and $ir(f)$ is the  constant function with value $r(f)$.

On the other hand, $i(f)(\{i\}\subset \{i, j\} ) = f(i)$,
and $ir(f)(\{i, j\} ) = f(i) + f(j)$.
Thus, $ir \neq ri$, the square is not commutative.

\section{From $S_{n+1}$ to $\GL(n+1,\BF_q)$}
We denote $G = G_{n+1} =\GL_{n+1}(\BF_q)$.

\subsection{Flag spaces and induced representations}
\label{4.1}
We fix a vector space $U = \BF_q^{n+1}$. 
For a type $\chi = (\chi_1 , \ldots , \chi_p)\in \CT yp_p$ (notation of~\S\ref{3.3}),
a flag of type $\chi$ is a chain of linear subspaces
\[F_\bullet\colon 0 = U_0 \subset U_1 \subset \ldots \subset U_p \subset U;\]
such that $\dim U_i=\chi_i$. We will call $p$ the length of $F_\bullet$.

We denote by $F\ell_\chi(q)$ the set of all flags of type $\chi$, and by $F\ell_p(q)$
the set of all flags of length $p$.

The set $F\ell_\chi(q)$ is acted upon from the left by 
$G$ and is a homogeneous set $F\ell_\chi(q) \cong G/P_\chi$,
where $P_\chi$ is the standard parabolic subgroup,  the stabilizer of the standard flag 
$\BF_q^{\chi_1} \subset  \ldots  \subset \BF_q^{\chi_p}.$ 

For $\chi = (1,2,\ldots,n,n+1)$, we have $P_\chi = B$, the standard Borel.  

Let $M_\chi(q)$ denote the space of maps of sets
\[\on{Maps}_{\on{Sets}}(F\ell_\chi(q),\sk) = \{ f\colon F\ell_\chi(q)\to\sk\};\]
it is a $G$-module isomorphic to $\Ind_{P_\chi}^G\sk$.

\subsection{Induction and restriction}
\label{4.2}
We have the natural $q$-analogues of the master complex and its conjugate of~\S\ref{3.5}.

Let  $\theta\subset \chi$ be two types.  
We have obvious maps 
\[\partial_{\chi\theta}\colon F\ell_\chi(q)\to F\ell_\theta(q),\]
wherefrom the restriction, or pullback,  morphisms in $\Rep(G)$
\[r_{\theta\chi} = \partial_{\chi\theta}^*\colon M_\theta(q)\to M_\chi(q).\]
Their conjugate are induction, or pushout,  morphisms 
\[i_{\chi\theta} =\partial_{\chi\theta *}\colon M_\chi(q)\to M_\theta(q),\]
that are explicitly defined as follows: for $f\in M_\chi(q), F_\bullet\in F\ell_\theta$,  
\begin{equation}
  \label{4.2.1}
  i_{\chi\theta}(f)(F_\bullet) = \sum_{F'_\bullet\in\partial_{\chi\theta}^{-1}(F_\bullet)} f(F'_\bullet).
\end{equation}

Proceeding as in~\S\ref{3.5} we obtain a parabolic complex
\begin{equation}
  \label{4.3.1}
\on{DL}(\sk)^\bullet: 
0 \to M_{\chi_0}(q) \to   \oplus_{\chi: |\chi| = n - 1} M_\chi(q) \to \ldots 
\to  M_\emptyset(q) \to 0
\end{equation}
This is nothing but the Deligne-Lusztig complex~\cite[(1.2)]{dl} for the case of trivial
representation $E=\sk$ of $\GL(n+1,\BF_q)$.
According to~\cite[Theorem in~\S2]{dl},
\begin{equation}
  \label{4.3.2}
H^i(\on{DL}(\sk)^{\bullet}) = 0, \ i > 0.
\end{equation}
The only nonzero cohomology is 
$H^0(\on{DL}(\sk)^\bullet) =\on{St}$, the Steinberg module~\cite{st}.

\subsubsection{Example $n=1$} 
Consider the following hyperbolic sheaf $E \in \CM(\BC, 0)$ with values 
in $\Rep(\GL(2,\BF_q))$:
\[E_0 = \Ind^G_B\sk =\on{Maps}(\BP^1,\sk),\ E_+ = E_- =\sk.\]
We have
\[\Ind^G_B\sk \cong\sk \oplus \on{St}.\]

The Fourier-Sato transform $\on{FT}(E)$ has the hyperbolic stalks
\[E_0^\vee = E_0 = \Ind^G_B\sk, \ 
E_+^\vee = E_-^\vee = \on{St}.\]

\subsubsection{Example $n=2$}
We have $4 = 2^2$ standard parabolics $G, P_1, P_2, B$ with
\[G/G = \{*\}, G/P_1 = \BP^2, G/P_2 = \BP^{2\vee}.\]
The real stratification $\CS_\BR$ of $\BR^2$ has~4 2-dimensional cells $C_i^\pm,\ i=1,2$;
and~4 1-dimensional cells $\ell_i^\pm,\ i=1,2$, and the 0-dimensional cell~$\{0\}$.

We define a hyperbolic sheaf $E \in \CM(\BC^2, \CS)$ with hyperbolic stalks
\[E_{C_i^\pm} =\sk,\]
\[E_{\ell_1^\pm} = \Ind^G_{P_1}\sk =\on{Map}(\BP^2,\sk),\ 
E_{\ell_2^\pm} = \Ind^G_{P_2}\sk =\on{Map}(\BP^{2\vee},\sk),\]  
\[E_0 = \Ind^G_B\sk =\on{Map}(G/B,\sk).\] 
We have the decomposition into irreducibles
\[E_0 \cong\sk \oplus \on{St}'\otimes\sk^2 \oplus \on{St}.\]
We have $\dim\on{St}' = q^2 + q,\ \dim\on{St}=q^3$, and $|G/B|=q^3+2q^2+2q+1$. 

The Fourier-Sato transform $\on{FT}(E)$ has hyperbolic stalks
\[E_0^\vee = E_{C_i^\pm},\ E_{\ell_1^\pm}^\vee = E_{\ell_2^\pm},\ E_{\ell_2^\pm}^\vee = E_{\ell_1^\pm},\
E_{C_i^\pm}^\vee = E_0.\]

\subsection{Unipotent representations of $\GL(n+1,\BF_q)$}
Let $\CH_{n+1}$ denote the Iwahori-Hecke algebra of $S_{n+1}$ at $q$. There is an
isomorphism~\cite{bc} of algebras $\sk[S_{n+1}]\simeq\CH_{n+1}$. At the price of
extending $\sk$ by $\sqrt{q}$, one can choose an explicit isomorphism of~\cite{l}.
It gives rise to an equivalence of semisimple abelian categories
$\CC=\Rep(S_{n+1})\cong\Rep(\CH_{n+1})=:\CC_q$. Under this equivalence, the master complex
$\CM as^\bullet$ goes to the complex $_q\CM as^\bullet$ of~\cite[(1.2)]{ka} (specialized to the
case when the Weyl group $W=S_{n+1}$, and the representation $M$ is trivial).

Furthermore, $\CC_q$ is canonically equivalent to the semisimple abelian category
$\CC_G:=\Rep_{\on{unip}}(G_{n+1})$ of unipotent representations of $\GL_{n+1}(\BF_q)$: direct sums of
constituents of the natural representation of $\GL_{n+1}(\BF_q)$ in the space of functions on the
flag variety. Under this equivalence, $_q\CM as^\bullet$ goes to the Deligne-Lusztig complex~\cite{dl}
$\on{DL}(\sk)$ of the trivial $G$-module $\sk$.

More generally, composing the functor $\scQ$ of~\S\ref{3.2.2} with the above equivalence
$\CC_G\simeq\CC$, we obtain a functor
\[\scQ^G\colon\Rep_{\on{unip}}(G_{n+1})\to\Hyp^\ass_n(\Rep_{\on{unip}}(G_{n+1})).\]
Now given a unipotent representation $K\in\Rep_{\on{unip}}(G_{n+1})$, we consider the complex
of vanishing cycles $\Phi^\bullet(\scQ^G(K))$ of~\eqref{(2.6.1)}.
Then it follows from~Theorem~\ref{thm 3.6} that $\Phi^\bullet(\scQ^G(K))$ is isomorphic to
the Deligne-Lusztig complex $\on{DL}(K)$.
Finally, it follows from~Theorem~\ref{key} that we have an isomorphism
$\on{FT}\scQ^G(K)\simeq\scQ^G(K^\vee)$, where $K^\vee$ stands for the Deligne-Lusztig dual of $K$.


\begin{thebibliography}{XXXXXX}

\bibitem[A]{a} D.~Alvis, {\em The duality operation in the character ring of a finite Chevalley 
  group}, Bull.\ Amer.\ Math.\ Soc.\ {\bf 1} (1979), 907--911.

\bibitem[BC]{bc} C.~T.~Benson, C.~W.~Curtis, {\em On the degrees and rationality of certain
  characters of finite Chevalley groups}, Trans.\ Amer.\ Math.\ Soc.\ {\bf 165} (1972), 251--273.

\bibitem[BK]{bk} R.~Bezrukavnikov, M.~Kapranov, {\em Microlocal sheaves and quiver varieties},
  Ann.\ Fac.\ Sci.\ Toulouse Math.\ (6) {\bf 25} (2016), no.~2--3, 473--516.
  
\bibitem[C]{c} C.~W.~Curtis, {\em Truncation and duality in the character ring of a finite group 
of Lie type}, J.\ Algebra {\bf 62} (1980), 320--332.  

\bibitem[DL]{dl} P.~Deligne, G.~Lusztig, {\em Duality for representations of  a reductive group 
  over a finite field},  J.\ Algebra {\bf 74} (1982), 284--291.


\bibitem[FH]{fh} W.~Fulton, J.~Harris, {\em Representation Theory},
  Graduate Texts in Mathematics {\bf 129} (1991).

\bibitem[FKS]{fks} M.~Finkelberg, M.~Kapranov, V.~Schechtman,
  {\em Fourier transform on hyperplane arrangements}, \arXiv{1712.0742}.

\bibitem[GGM]{ggm} A.~Galligo,  M.~Granger, Ph.~Maisonobe, 
{\em $\CalD$-modules et faisceaux pervers dont le support singulier est un croisement normal},   
Ann.\ Inst.\ Fourier {\bf 35} (1985), 1--48.

\bibitem[Kat]{ka} S.-I.~Kato, {\em Duality for representations of a Hecke algebra},
  Proc.\ Amer.\ Math.\ Soc.\ {\bf 19} (1993), 941--946.
  
\bibitem[Kaw]{kaw} N.~Kawanaka, {\em Fourier transforms of nilpotently supported invariant functions
  on a simple Lie algebra over a finite field}, Invent.\ Math.\ {\bf 69} (1982), no.~3, 411--435.

\bibitem[Kos]{ko} C.~Kostka, {\em \"Uber den Zusammenhang zwischen einigen Formen von symmetrischen 
  Funktionen}, J.\ reine und angew.\ Math.\ {\bf 93} (1882), 89--123.
  
\bibitem[KS]{ks1} M.~Kapranov, V.~Schechtman,
  {\em Perverse sheaves over real hyperplane arrangements}, Ann.\ Math.\ {\bf 183} (2016), 619--679.


\bibitem[Lus]{l} G.~Lusztig, {\em On a Theorem of Benson and Curtis}, J.\ Algebra {\bf 71} (1981),
  490--498.

\bibitem[Mac]{Mac}
P.~A.~MacMahon,
{\em Combinatory Analysis,}  Cambridge 1915--1916.

\bibitem[Mal]{m} B.~Malgrange, 
{\em Equations Diff\'erentielles \`a Coefficients Polynomiaux},
  Progress in Math.\ {\bf 96}, Birkh\"auser, Boston (1991).

\bibitem[So1]{so} L.~Solomon, {\em The orders of finite Chevalley groups}, J.\ Algebra
  {\bf 3} (1966), 376--393.

\bibitem[So2]{so2} L.~Solomon, {\em A Decomposition of the Group Algebra of a Finite Coxeter
  Group}, J.\ Algebra {\bf 9} (1968), 220--239.
  
\bibitem[St]{st} R.~Steinberg, {\em A geometric approach to the theory of representations  of
  the full linear group over a Galois field}, Trans.\ Amer.\ Math.\ Soc. {\bf 71} (1951), no.~2,
  274--282.
  
\bibitem[T]{t} K.~Takeuchi, {\em Dimension formula for hyperfunction solutions
to holonomic $\CalD$-modules}, Adv.\ Math.\ {\bf 180} (2003), 134--145.  

\bibitem[We]{we} H.~Weyl, {\em The classical groups, their invariants and representations},
  Princeton Landmarks in Mathematics, Princeton University Press, Princeton, NJ (1997),
  15th printing, Princeton Paperbacks.

\bibitem[Wi]{wi} E.~Witt, {\em Spiegelungsgruppen und Aufzahlung halbeinfacher Liecher Ringe},
  Abh.\ Math.\ Sem.\ Univ.\ Hamburg {\bf 14} (1941), 289--322.   

  \end{thebibliography}
\end{document}